

\documentclass[preprint]{elsarticle}



\usepackage{lineno,hyperref}
\modulolinenumbers[5]

\usepackage{geometry}
\usepackage{pifont}
\usepackage{amscd}
\usepackage{amsmath,amsfonts,amssymb,amsthm}
\usepackage{latexsym}
\usepackage{stmaryrd}
\usepackage{overpic}
\usepackage{extarrows}
\usepackage{subfig}
\usepackage{multirow}
\usepackage{hyperref}


\journal{Elsevier}

\newtheorem{theorem}{Theorem}[section]
\newtheorem{lemma}[theorem]{Lemma}

\newdefinition{remark}{Remark}[section]

\numberwithin{equation}{section} 
\numberwithin{table}{section}
\numberwithin{figure}{section}



\begin{document}

\begin{frontmatter}



\title{ On the Stability and Accuracy of Partially and Fully
\\Implicit Schemes for Phase Field Modeling \tnoteref{This work is
supported in part by the U.S.  Department of Energy, Office of
Science, Office of Advanced Scientific Computing Research as part of
the Collaboratory on Mathematics for Mesoscopic Modeling of Materials
under contract number DE-SC0009249.}}
\tnotetext[Funding]{
This work is supported in part by the U.S.  Department of Energy,
Office of Science, Office of Advanced Scientific Computing Research as
part of the Collaboratory on Mathematics for Mesoscopic Modeling of
Materials under contract number DE-SC0009249.}

\author[psu]{Jinchao Xu\corref{cor1}}
\ead{xu@math.psu.edu}

\author[osu]{Yukun Li}
\ead{li.7907@osu.edu}

\author[psu]{Shuonan Wu}
\ead{sxw58@psu.edu}

\author[psu]{Arthur Bousquet}
\ead{akb5670@psu.edu}
  
\address[psu]{Department of Mathematics, Pennsylvania State University,
University Park, PA, 16802, USA}

\address[osu]{Department of Mathematics, The Ohio State University,
Columbus, OH, 43210, USA}

\cortext[cor1]{Corresponding author}

\begin{abstract}
  We study in this paper the accuracy and stability of partially and
  fully implicit schemes for phase field modeling.  Through
  theoretical and numerical analysis of Allen-Cahn and Cahn-Hillard
  models, we investigate the potential problems of using partially
  implicit schemes, demonstrate the importance of using fully implicit
  schemes and discuss the limitation of energy stability that
  are often used to evaluate the quality of a numerical scheme for
  phase-field modeling.  In particular, we make the following
  observations:
\begin{enumerate}
\item a convex splitting scheme (CSS in short) can be equivalent to
  some fully implicit scheme (FIS in short) with a much different time
  scaling and thus it may lack numerical accuracy;
\item most implicit schemes (in discussions) are energy-stable if the
   time-step size is sufficiently small;
 \item a traditionally known conditionally energy-stable scheme still
   possess an unconditionally energy-stable physical solution;
\item an unconditionally energy-stable scheme is not necessarily
better than a conditionally energy-stable scheme when the time step
size is not small enough;
\item a first-order FIS for the Allen-Cahn model can be
  devised so that the maximum principle will be valid on the discrete
  level and hence the discrete phase variable satisfies $|u_h(x)|\le
  1$ for all $x$ and, furthermore, the linearized discretized system
  can be effectively preconditioned by discrete Poisson operators.
\end{enumerate}
\end{abstract}

\begin{keyword}
The Allen-Cahn model, the Cahn-Hilliard model, fully implicit
schemes, convex splitting schemes, energy minimization.
\end{keyword}

\end{frontmatter}

\section{Introduction} \label{sec:intro}
In this paper, we consider the following Allen-Cahn model
\cite{Allen_Cahn79}:
\begin{equation}\label{eq:AC}
\begin{aligned}
u_t - \Delta u + \frac{1}{\epsilon^2}f(u) &=0 \qquad \mbox{in } 
\Omega_T:=\Omega\times(0,T),\\
\frac{\partial u}{\partial n} &=0 \qquad \mbox{on }
\partial\Omega_T:=\partial\Omega\times(0,T),
\end{aligned}
\end{equation}
and the following Cahn-Hilliard model \cite{Cahn_Hilliard58}:
\begin{equation} \label{eq:CH}
\begin{aligned}
u_t-\Delta w &=0  \qquad \mbox{in } \Omega_T,\\
-\epsilon\Delta u +\frac{1}{\epsilon}f(u) &=w \qquad \mbox{in }
\Omega_T,\\
\frac{\partial u}{\partial n} =\frac{\partial w}{\partial n} &=0
\qquad \mbox{on } \partial\Omega_T.
\end{aligned}
\end{equation}
The initial condition is set as $u|_{t=0} = u_0$. Here, $T$ is the end
time, $\Omega\subset \mathbb{R}^d \,(d=2,3)$ is
a bounded domain and $f=F'$ for some double well potential $F$ which, in
this paper, is taken to be the following polynomial:
\begin{equation}\label{double-well} F(u)=\frac{1}{4}(u^2-1)^2.
\end{equation}

In recent years, there have been a lot of studies in the literature on
the modeling aspects and their numerical solutions for both Allen-Cahn
and Cahn-Hilliard equations.  For the modeling aspects, we refer to
\cite{Allen_Cahn79, cahn1996limiting, Cahn_Hilliard58, novick1998cahn,
  xinfu1994spectrums, Chen96, Evans_Soner_Souganidis92,
  boyer2014hierarchy, wu2017multiphase}.  In this paper, we will focus
on the numerical schemes for both these equations.  Among the various
different schemes studied in the literature, a special class of
partially implicit schemes, known as convex splitting schemes, appears
to be most popular, c.f.~\cite{guillen2014second, guan2014second,
  feng2015long, shen2011spectral, shen2010numerical, yang2009error,
  Feng_Li15} for the Allen-Cahn equation and \cite{guillen2014second,
  aristotelous2013mixed, feng2015long, shen2010numerical,
  shen2011spectral, shen2010energy, guan2014second, feng2015analysis,
  Eyre98} for the Cahn-Hilliard model.  The popularity of the CSS is
due to, among others, its two advantages: (1) a typical CSS is
unconditionally energy-stable without any stringent restriction
pertaining to the time step; (2) the resulting nonlinear numerical
system can be easily solved (e.g. Newton iteration is guaranteed to
converge regardless of the initial guess).  In comparison, a standard
fully implicit scheme is only energy-stable when the time step size is
sufficiently small.

It is against the conventional wisdom that a partially implicit scheme
such as the convex splitting scheme has a better stability property
than a fully implicit scheme.  One main goal of this paper is to
understand this unusual phenomenon.  For the Allen-Cahn model, we
prove that the standard first-order CSS is exactly the same as the
standard first-order FIS but with a (much) smaller time step size and
as a result, it would provide an approximation to the original
solution of the Allen-Cahn model at a delayed time (although the
magnitude of the delay is reduced when the time step size is
reduced). Such a time delay is also observed for other partially
implicit schemes when time step size is not sufficient small. For the
Cahn-Hilliard model, we prove that the standard CSS is exactly the
same as the standard FIS for a different model that is a (nontrivial)
perturbation of the original Cahn-Hilliard model.  This at least
explains theoretically why a CSS has a better stability property than
a FIS does since a CSS is actually a FIS with a very small time-step
size. In addition, we argue that such a gain of stability is at the
expense of a possible loss of accuracy.

Given the aforementioned equivalences between CSS and FIS and the
popularity of CSS in the literature, the value of FIS with a seemingly
stringent time-step constraint (which, again, are equivalent to CSS
without any time-step constraint) should be re-examined. 
Indeed, the importance of using fully implicit schemes for the phase
field simulations has been addressed in the existing literature, e.g.
\cite{du1991numerical, feng2003numerical, feng2007analysis,
rosam2007fully, GBH08, shen2011spectral, graser2013time,
guillen2014second, Feng_Li15, feng2015analysis, li2015numerical,
wu2017multiphase}.  In this paper, we further study
three families of new algorithms for FIS. First, we
revisit the standard fully implicit scheme by extending it to a energy
minimization problem at each time step. The minimization problem,
however, admits a non-convex discrete energy when the time step size
is not sufficiently small. Furthermore, we will be able to prove, rather
straightforwardly, that the global minimizer satisfies the
unconditional energy-stability, which is a natural property for linear
systems and the desired property for the nonlinear systems like the
Allen-Cahn or the Cahn-Hilliard equations. The results given by the
energy minimization problem is quite different from those given by the
standard fully implicit scheme. More precisely, instead of the severe
restriction pertaining to the time step size, the energy minimization
problem gives a good approximation to the physical solution only when
the discretization error in time is controlled. Moreover, with the
energy minimization problem, various minimization solvers (e.g.
L-BFGS \cite{nocedal1980updating, byrd1995limited}) can be efficiently
applied. This may lead a promising direction to the design of accurate
and efficient numerical schemes for phase field modeling.

Secondly, we propose a modification of a typical FIS for the
Allen-Cahn so that the maximum principle will be valid on the discrete
level.  Thirdly, for this modified FIS scheme, we rigorously show
that, under the appropriate time-step size constraint, the
linearization of such a modified FIS can be uniformly preconditioned
by a Poisson-like operator.

Second-order partially implicit schemes have also been designed in the
literature with the same purpose of allowing large time step size as
the first-order partially implicit schemes. But similar to the
standard CSS, the time delay happens with large time step
size. Actually, the second-order CSS (cf. \cite{guillen2014second,
shen2011spectral, shen2010numerical, yang2009error}) can also be
viewed as the modified Crank-Nicolson scheme \cite{du1991numerical,
shen2010numerical, condette2011spectral} on the artificially
convexified model. Further, we demonstrate that, through numerical
experiments with the modified Crank-Nicolson scheme, an
unconditionally energy stable scheme is not necessarily better than a
conditionally energy stable scheme.

The rest of paper is organized as follows.  In \S \ref{sec:1st-order},
we focus on the first-order schemes. We study the convexity of the
fully implicit scheme, prove that a typical first-order CSS is exactly
equivalent to some first-order FIS.  We also introduce the energy
minimization version of some first-order FIS, and show that the convex
splitting schemes can be viewed as artificial convexity schemes.  In
\S \ref{sec:modified-FIS}, we propose a modified FIS (or CSS) that
satisfies maximum principle on the discrete level and further prove
that the modified scheme can be preconditioned by a Poisson-like
operator. In \S \ref{sec:2nd-order}, we discuss the second-order
schemes. We study a modified Crank-Nicolson scheme and its convex
splitting version, compare the modified Crank-Nicolson scheme and some
other second-order partially implicit schemes.  Finally, in \S
\ref{sec:concluding}, we give some concluding remarks. 

\section{First-order schemes} \label{sec:1st-order}
First, we introduce some notation. Let $\mathcal{T}_h$ be a
shape-regular (which may not be quasi-uniform) triangulation of
$\Omega \subset \mathbb{R}^d \ (d=2,3)$. The nodes of $\mathcal{T}_h$
is denoted by $\mathcal{N}_h$. $K$ represents each element and
$\overline{\Omega}=\bigcup_{K\in\mathcal{T}_h} \overline{K}$.  Let
$h_K$ denote the diameter of $K\in \mathcal{T}_h$ and
$h:=\mbox{max}\{h_K; K\in\mathcal{T}_h\}$.  Define the finite element
space $V_h$ by
\begin{equation} \label{eq:Pr}
V_h = \bigl\{v_h\in C(\overline{\Omega}): v_h|_K=P_r(K)\bigr\},
\end{equation}
where $P_r(K)$ denotes the set of all polynomials whose degrees do not
exceed a given positive integer $r$ on $K$.  The $L^2$-inner product
over the domain $\Omega$ is denoted by $(\cdot, \cdot)$. For the time
discretization, let $k_n$ denote the time step size on $n$-th step and
$t_n := \sum_{i=1}^n k_i$.

\subsection{Fully implicit schemes and their convexity and energy
stability properties} \label{sec:FIS}
A standard first-order fully implicit scheme to problem \eqref{eq:AC}
(FIS in short) is defined by seeking $u_h^n\in V_h$ for $n=1,2,\cdots
$, such that
\begin{equation}\label{eq:FIS-AC}
(\frac{u_h^{n}-u_h^{n-1}}{k_n},v_h)+(\nabla u_h^{n},\nabla v_h) +
\frac{1}{\epsilon^2}(f(u_h^{n}),v_h) = 0 \qquad\forall v_h\in V_h.
\end{equation}
 
A standard first-order FIS to problem \eqref{eq:CH} is defined by
seeking $u_h^n \in V_h$ and $w_h^n \in V_h$ for $n=1,2,\cdots$, such
that 
\begin{equation} \label{eq:FIS-CH}
\begin{aligned}
(\frac{u_h^{n}-u_h^{n-1}}{k_n},\eta_h)+(\nabla w_h^{n},\nabla \eta_h) &=0
\qquad \forall \,\eta_h\in V_h, \\
\epsilon (\nabla u_h^{n},\nabla v_h) + \frac{1}{\epsilon}((u_h^n)^3 -
u_h^n,v_h)-(w_h^{n},v_h) &=0 \qquad \forall\,
  v_h\in V_h. 
\end{aligned}
\end{equation}
 
Following \cite{Evans_Soner_Souganidis92,Ilmanen93}, the
Allen-Cahn equation \eqref{eq:AC} can be interpreted as the
$L^2$-gradient flow for the free-energy functional, namely
\begin{equation}\label{eq:energy-AC}
\begin{aligned}
J_\epsilon^{\rm AC}(v) &:= 
\int_\Omega \Bigl( \frac12 |\nabla v|^2
+ \frac{1}{\epsilon^2} F(v) \Bigr)\, dx, \\
\frac{d}{dt}J_\epsilon^{\rm AC}(u(t)) &= \bigl(-\Delta
u+\frac{1}{\epsilon^2}f(u),u_t\bigr)_{L^2(\Omega)}=-\|u_t\|_{L^2(\Omega)}^2\leq0.
\end{aligned}
\end{equation}

Following \cite{Alikakos_Bates_Chen94,Chen96,Pego89}, the
Cahn-Hilliard equations \eqref{eq:CH} can be interpreted as the
$H^{-1}$-gradient flow for the free-energy functional, namely
\begin{equation}\label{eq:energy-CH}
\begin{aligned}
J_\epsilon^{\rm CH}(v) &:= \int_\Omega \Bigl( \frac\epsilon2 |\nabla v|^2+
\frac{1}{\epsilon} F(v) \Bigr)\, dx, \\
\frac{d}{dt}J_\epsilon^{\rm CH}(u(t)) &= \bigl(\Delta(\epsilon\Delta
u-\frac{1}{\epsilon}f(u)),u_t\bigr)_{H^{-1}(\Omega)}
=-\|u_t\|_{H^{-1}(\Omega)}^2\leq 0.
\end{aligned}
\end{equation}
Therefore, we say that a discretization scheme such as \eqref{eq:FIS-AC} or
\eqref{eq:FIS-CH} is energy-stable if 
\begin{equation} \label{AC-CH-stability}
  J_\epsilon^{\rm AC}(u^n_h)\le   J_\epsilon^{\rm AC}(u^{n-1}_h) \quad
  \mbox{or} \quad 
  J_\epsilon^{\rm CH}(u^n_h)\le   J_\epsilon^{\rm CH}(u^{n-1}_h) \qquad
  n=1,2,\ldots
\end{equation}

We would like to point out that the concept of energy-stability for
the nonlinear schemes such as \eqref{eq:FIS-AC} or
\eqref{eq:FIS-CH} is different from the standard concept
of stability for linear schemes.  For most linear systems (e.g.
heat equation), a fully implicit scheme is usually unconditionally
stable.  But for nonlinear systems, fully implicit schemes such as
\eqref{eq:FIS-AC} or \eqref{eq:FIS-CH} are only conditionally
energy-stable, namely they are only energy-stable when the time-step
size $k_n$ is appropriately small.  This is well-known fact in the
phase-field literature (cf. \cite{feng2003numerical, graser2013time}).
For completeness, we will study this energy-stability property through
the study of the convexity of the relevant schemes. Further, we extend
the standard schemes to the energy minimization versions at each
time step, which seem to have better numerically performance. 

\subsubsection{Convexity of fully implicit schemes for the Allen-Cahn
equation}
In this section, we next study the convexity property of the FIS of
the Allen-Cahn and Cahn-Hilliard equations.  Consider the Allen-Cahn
equation, in view of \eqref{eq:energy-AC}, we define the following
discrete energy
\begin{equation} \label{eq:FIS-AC-energy} 
E_n^{\rm AC}(u_h ;u_h^{n-1}) 
=J_\epsilon^{\rm AC}(u_h)+\frac{1}{2k_n}\int_{\Omega}(u_h - u_h^{n-1})^2dx.
\end{equation}
We also extend the standard first-order fully implicit  scheme to the
following energy minimization problem:  
\begin{equation} \label{equ:FIS-AC-energy-min}
u_h^n = \underset{u_h\in V_h}{\mathrm{argmin}} E_n^{\rm AC}(u_h;u_h^{n-1}).
\end{equation}

\begin{theorem}\label{thm:convexity-FIS-AC}
We have 
\begin{enumerate}
\item Under the condition that $k_n\leq \epsilon^2$, $E_n^{\rm
AC}(\cdot;u_h^{n-1})$ is strictly convex on $V_h$.
\item The solution of \eqref{eq:FIS-AC} satisfies $(E_n^{\rm
AC})'(u_h^n; u_h^{n-1})(v_h) = 0$.
\item The following discrete energy law holds for
\eqref{equ:FIS-AC-energy-min}
\begin{align}\label{eq:FIS-AC-energy-law} 
J_\epsilon^{\rm AC}(u_h^n) + \frac{1}{2k_n}\|u_h^n -
u_h^{n-1}\|_{L^2(\Omega)}^2 \leq J_\epsilon^{\rm AC}(u_h^{n-1}).
\end{align}
\end{enumerate}
\end{theorem}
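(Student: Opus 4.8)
The plan is to reduce all three parts to elementary properties of the functional $E_n^{\rm AC}(\cdot;u_h^{n-1})$ together with its first two Gâteaux derivatives. First I would record, by differentiating $s\mapsto E_n^{\rm AC}(u_h+sv_h;u_h^{n-1})$ once and twice at $s=0$, that for all $u_h,v_h\in V_h$
\begin{equation*}
(E_n^{\rm AC})'(u_h;u_h^{n-1})(v_h)=(\nabla u_h,\nabla v_h)+\frac{1}{\epsilon^2}(f(u_h),v_h)+\frac{1}{k_n}(u_h-u_h^{n-1},v_h),
\end{equation*}
\begin{equation*}
(E_n^{\rm AC})''(u_h;u_h^{n-1})(v_h,v_h)=\|\nabla v_h\|_{L^2(\Omega)}^2+\frac{1}{\epsilon^2}(f'(u_h)v_h,v_h)+\frac{1}{k_n}\|v_h\|_{L^2(\Omega)}^2,
\end{equation*}
where $f'(t)=F''(t)=3t^2-1$.

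For part (i), the key pointwise fact is $f'(t)=3t^2-1\ge -1$ for every $t\in\mathbb{R}$. Hence, along any segment $u_h(s)=a+sv$ in $V_h$ with $v\neq 0$,
\begin{equation*}
(E_n^{\rm AC})''(u_h(s);u_h^{n-1})(v,v)\ge \|\nabla v\|_{L^2(\Omega)}^2+\Bigl(\frac{1}{k_n}-\frac{1}{\epsilon^2}\Bigr)\|v\|_{L^2(\Omega)}^2+\frac{3}{\epsilon^2}\|u_h(s)v\|_{L^2(\Omega)}^2 .
\end{equation*}
When $k_n\le\epsilon^2$ the middle term is nonnegative, so the right-hand side is $\ge 0$, and it vanishes only if $\nabla v\equiv 0$ and $u_h(s)v\equiv 0$; then $v$ is a nonzero constant, which forces $u_h(s)\equiv 0$, and this can occur for at most one value of $s$. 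Thus $s\mapsto E_n^{\rm AC}(u_h(s);u_h^{n-1})$ has a nonnegative second derivative that is positive away from isolated points, hence is strictly convex on every segment, which gives strict convexity of $E_n^{\rm AC}(\cdot;u_h^{n-1})$ on $V_h$.

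Part (ii) is immediate from the formula for $(E_n^{\rm AC})'$ above: setting $u_h=u_h^n$ shows that $(E_n^{\rm AC})'(u_h^n;u_h^{n-1})(v_h)=0$ for all $v_h\in V_h$ is precisely the scheme \eqref{eq:FIS-AC} (and, in particular, when $k_n\le\epsilon^2$ this critical point is the unique global minimizer by part (i), so \eqref{eq:FIS-AC} and \eqref{equ:FIS-AC-energy-min} agree). For part (iii), one first notes that a minimizer in \eqref{equ:FIS-AC-energy-min} exists because $E_n^{\rm AC}(\cdot;u_h^{n-1})$ is continuous and coercive on the finite-dimensional space $V_h$. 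Then, testing the minimality of $u_h^n$ against the competitor $u_h=u_h^{n-1}$ and using $E_n^{\rm AC}(u_h^{n-1};u_h^{n-1})=J_\epsilon^{\rm AC}(u_h^{n-1})$,
\begin{equation*}
J_\epsilon^{\rm AC}(u_h^n)+\frac{1}{2k_n}\|u_h^n-u_h^{n-1}\|_{L^2(\Omega)}^2=E_n^{\rm AC}(u_h^n;u_h^{n-1})\le E_n^{\rm AC}(u_h^{n-1};u_h^{n-1})=J_\epsilon^{\rm AC}(u_h^{n-1}),
\end{equation*}
which is exactly \eqref{eq:FIS-AC-energy-law}; this step uses only the definition of $u_h^n$ as a minimizer, not convexity.

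The only genuinely delicate point is the strictness claimed in part (i): one must verify that the possible degenerate direction (a constant $v_h$ at the point $u_h\equiv 0$ when $k_n=\epsilon^2$) does not spoil strict convexity, which is handled by the ``at most one $s$'' observation above. Everything else is a direct computation with the two derivative formulas.
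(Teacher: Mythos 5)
Your proof is correct and follows essentially the same route as the paper: compute the first and second G\^ateaux derivatives of $E_n^{\rm AC}(\cdot;u_h^{n-1})$, identify \eqref{eq:FIS-AC} as its Euler--Lagrange equation, and obtain \eqref{eq:FIS-AC-energy-law} by testing the minimizer against the competitor $u_h^{n-1}$. Your treatment of the borderline case $k_n=\epsilon^2$ (where the Hessian can vanish in a constant direction at $u_h\equiv 0$, so one argues strict convexity along segments via the ``at most one $s$'' observation) is in fact slightly more careful than the paper's blanket claim that the second derivative is positive for every $v_h\neq 0$.
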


\begin{proof}
Taking the second derivative of $E_n^{\rm AC}(\cdot;u_h^{n-1})$, we
get for any $v_h \in V_h$,
\begin{equation}\label{eq:FIS-AC-frechet}
(E_n^{\rm AC})''(u_h;u_h^{n-1})(v_h,v_h) =
\frac{3}{\epsilon^2}\int_{\Omega}u_h^2v_h^2dx +
\int_{\Omega}(\frac{1}{k_n}-\frac{1}{\epsilon^2})v_h^2dx + \|\nabla
v_h\|_{L^2(\Omega)}^2.
\end{equation}
When $k_n\leq \epsilon^2$, $(E_n^{\rm AC})''(u_h;u_h^{n-1})(v_h,v_h)>0$ when
$v_h\neq 0$, which means $E(\cdot;u_h^{n-1})$ is strictly convex on
$V_h$.
A direct calculation shows that \eqref{eq:FIS-AC} satisfies $(E_n^{\rm
AC})^{\prime}(u_h^n;u_h^{n-1})(v_h) = 0$, and the following coercivity
condition holds:
\begin{equation}\label{eq:FIS-AC-energy-boundedness}
E_n^{\rm AC}(u_h;u_h^{n-1}) \geq M_1\|u_h\|_{H^1(\Omega)}^2 - M_2,
\end{equation}
where $M_1$ and $M_2$ are positive constants that depend on
$\epsilon$.  Then the unique solvability of \eqref{eq:FIS-AC} follows
from \cite{ciarlet1989introduction} and
\eqref{eq:FIS-AC-energy-boundedness}. Moreover, for the global
minimizer of \eqref{equ:FIS-AC-energy-min}, we have  
$$ 
J_\epsilon^{\rm AC}(u_h^n) + \frac{1}{2k_n}\|u_h^n -
u_h^{n-1}\|_{L^2(\Omega)}^2
= E_n^{\rm AC}(u_h^n;u_h^{n-1}) \leq E_n^{\rm AC}(u_h^{n-1};u_h^{n-1}) 
= J_\epsilon^{\rm AC}(u_h^{n-1}).
$$ 
Then we finish the proof.
\end{proof}

In view of Theorem \ref{thm:convexity-FIS-AC}, let us introduce the
terminology of {\it convex scheme}.  We say that a scheme is {\it
convex} if it is equivalent to the minimization of a convex
functional.  Thus \eqref{eq:FIS-AC} is a convex scheme under the
condition $k_n \leq \epsilon^2$, under which the first-order FIS
\eqref{eq:FIS-AC} is equivalent to the energy minimization version
\eqref{equ:FIS-AC-energy-min}. When $k_n > \epsilon^2$, the
$E_n^{\rm AC}(\cdot;u_h^{n-1})$ may not be convex, hence the standard
Newton's method for \eqref{eq:FIS-AC} may fail in this case.  Thus,
generally speaking, the scheme \eqref{equ:FIS-AC-energy-min} calls for
the global minimization solver.

\subsubsection{Convexity of fully implicit scheme for the
Cahn-Hilliard equation}

Define the discrete Laplace operator $\Delta_h: {V}_h\mapsto
{V}_h$ as follows: Given $v_h \in {V}_h$, let $\Delta_h v_h \in{V}_h$
such that
\begin{equation} \label{eq:discrete-Laplace}
(\Delta_h v_h, w_h)=-(\nabla v_h,\nabla w_h) \qquad \forall\, w_h\in V_h.
\end{equation}
Let $L^2_0$ denote the collection of functions in $L^2(\Omega)$ with
zero mean, and let $\mathring{V}_h:=V_h\cap L^2_0$. Taking $w_h = 1$
in \eqref{eq:discrete-Laplace}, we know that $\text{Range}(\Delta_h) \subset
\mathring{V}_h$. Further, the well-posedness of the Poisson problem
with Neumann boundary condition on $\mathring{V}_h$ implies that
$\text{Range}(\Delta_h) = \mathring{V}_h$. Therefore,
$\Delta_h|_{\mathring{V}_h}: \mathring{V}_h \mapsto \mathring{V}_h$
is an isomorphism, then $\Delta_h^{-1} :=
(\Delta_h|_{\mathring{V}_h})^{-1}: \mathring{V}_h \mapsto
\mathring{V}_h$ is well-defined.

Consider the Cahn-Hilliard equations, in view of \eqref{eq:energy-CH},
we define the discrete energy
\begin{equation} \label{eq:FIS-CH-energy} 
E_n^{\rm CH}(\theta_h;u_h^{n-1}) =
J_\epsilon^{\rm CH}(u_h^{n-1}+\theta_h) +
\frac{1}{2k_n}\|\nabla\Delta_h^{-1}\theta_h\|_{L^2(\Omega)}^2 \qquad
\theta_h\in \mathring{V}_h. 
\end{equation}
Then, the energy minimization version of the first-order FIS for the
Cahn-Hilliard equations is shown to be  
\begin{equation} \label{equ:FIS-CH-energy-min}
\theta_h^n = \underset{\theta_h \in \mathring{V}_h}{\mathrm{argmin}} E_n^{\rm
  CH}(\theta_h; u_h^{n-1}), \qquad u_h^{n} = u_h^{n-1} + \theta_h^n. 
\end{equation}

\begin{theorem}\label{thm:convexity-FIS-CH}
We have  
\begin{enumerate}
\item Under the condition that $k\leq 4\epsilon^3$, $E_n^{\rm CH}(\cdot; u_h^{n-1})$ is convex on $\mathring{V}_h$.
\item The solution of \eqref{eq:FIS-CH} satisfies
$u_h^{n}=u_h^{n-1}+\theta_h$, with $(E_n^{\rm CH})'(\theta_h;
u_h^{n-1})(v_h) = 0$.
\item The following energy law holds for \eqref{equ:FIS-CH-energy-min} 
\begin{equation} \label{eq:CH-energy}
J_\epsilon^{\rm CH}(u_h^{n}) +
\frac{1}{2k_n}\|\nabla\Delta_h^{-1}(u_h^{n}-u_h^{n-1})\|_{L^2(\Omega)}^2
\le J_\epsilon^{\rm CH}(u_h^{n-1}).
\end{equation}
\end{enumerate}

\end{theorem}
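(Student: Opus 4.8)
The plan is to follow the proof of Theorem~\ref{thm:convexity-FIS-AC} almost verbatim, with the $L^2$-structure used there replaced by the discrete $H^{-1}$-structure carried by $\nabla\Delta_h^{-1}$ on $\mathring{V}_h$. For part~(1) I would first compute the second Fréchet derivative: for $\theta_h,v_h\in\mathring{V}_h$,
\[
(E_n^{\rm CH})''(\theta_h;u_h^{n-1})(v_h,v_h)=\epsilon\|\nabla v_h\|_{L^2(\Omega)}^2+\frac1\epsilon\int_\Omega\bigl(3(u_h^{n-1}+\theta_h)^2-1\bigr)v_h^2\,dx+\frac1{k_n}\|\nabla\Delta_h^{-1}v_h\|_{L^2(\Omega)}^2 .
\]
Discarding the nonnegative cubic term and using $F''\ge-1$, convexity on $\mathring{V}_h$ reduces to proving $\frac1\epsilon\|v_h\|_{L^2(\Omega)}^2\le\epsilon\|\nabla v_h\|_{L^2(\Omega)}^2+\frac1{k_n}\|\nabla\Delta_h^{-1}v_h\|_{L^2(\Omega)}^2$ for all $v_h\in\mathring{V}_h$.

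The key ingredient is the identity $\|v_h\|_{L^2(\Omega)}^2=(\Delta_h\Delta_h^{-1}v_h,v_h)=-(\nabla\Delta_h^{-1}v_h,\nabla v_h)$, which comes straight from \eqref{eq:discrete-Laplace}, followed by Cauchy--Schwarz and Young's inequality: $\|v_h\|_{L^2(\Omega)}^2\le\frac{a}{2}\|\nabla v_h\|_{L^2(\Omega)}^2+\frac1{2a}\|\nabla\Delta_h^{-1}v_h\|_{L^2(\Omega)}^2$ for every $a>0$. Choosing the optimal weight $a=2\epsilon^2$ makes $\frac{a}{2\epsilon}=\epsilon$ and $\frac1{2a\epsilon}=\frac1{4\epsilon^3}$, so the desired inequality holds precisely when $k_n\le4\epsilon^3$; this is the origin of the stated threshold, and tracking this optimal Young weight is the only slightly delicate point.

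For part~(2), I would take $\eta_h=1$ in the first equation of \eqref{eq:FIS-CH} to see that $\theta_h^n:=u_h^n-u_h^{n-1}\in\mathring{V}_h$, so $\Delta_h^{-1}\theta_h^n$ is defined; that equation then reads $\frac1{k_n}\theta_h^n=\Delta_h w_h^n$ in $\mathring{V}_h$, whence $w_h^n=\frac1{k_n}\Delta_h^{-1}\theta_h^n+c$ for a constant $c$. Substituting this into the second equation of \eqref{eq:FIS-CH} and testing with $v_h\in\mathring{V}_h$ (so that $(c,v_h)=0$), while rewriting $-(\Delta_h^{-1}\theta_h^n,v_h)=(\nabla\Delta_h^{-1}\theta_h^n,\nabla\Delta_h^{-1}v_h)$ via \eqref{eq:discrete-Laplace} and self-adjointness of $\Delta_h^{-1}$, gives exactly $(E_n^{\rm CH})'(\theta_h^n;u_h^{n-1})(v_h)=0$ for all $v_h\in\mathring{V}_h$. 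Unique solvability follows as in Theorem~\ref{thm:convexity-FIS-AC} from a coercivity bound $E_n^{\rm CH}(\theta_h;u_h^{n-1})\ge M_1\|\theta_h\|_{H^1(\Omega)}^2-M_2$, using that $\|\nabla\Delta_h^{-1}\cdot\|_{L^2(\Omega)}$ is a norm on $\mathring{V}_h$.

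Finally, part~(3) is immediate: since $\theta_h^n$ is the global minimizer and $0\in\mathring{V}_h$,
\[
J_\epsilon^{\rm CH}(u_h^n)+\frac1{2k_n}\|\nabla\Delta_h^{-1}(u_h^n-u_h^{n-1})\|_{L^2(\Omega)}^2=E_n^{\rm CH}(\theta_h^n;u_h^{n-1})\le E_n^{\rm CH}(0;u_h^{n-1})=J_\epsilon^{\rm CH}(u_h^{n-1}).
\]
The main obstacle is organizational rather than technical: correctly handling the additive-constant/zero-mean bookkeeping that identifies the mixed two-field scheme \eqref{eq:FIS-CH} with the single-field minimization \eqref{equ:FIS-CH-energy-min}, and pinning down the sharp constant in the interpolation step of part~(1).
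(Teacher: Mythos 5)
Your proposal is correct and follows essentially the same route as the paper: the same second-derivative computation, the same interpolation--Young inequality $\frac1\epsilon\|v_h\|_{L^2}^2\le\epsilon\|\nabla v_h\|_{L^2}^2+\frac1{4\epsilon^3}\|\nabla\Delta_h^{-1}v_h\|_{L^2}^2$ giving the threshold $k_n\le4\epsilon^3$, the same reduction of the mixed scheme by testing with constants and zero-mean functions, and the same global-minimizer comparison $E_n^{\rm CH}(\theta_h^n)\le E_n^{\rm CH}(0)$ for the energy law. The only cosmetic difference is that you leave the additive constant in $w_h^n$ unspecified, whereas the paper identifies it as $\frac{1}{\epsilon|\Omega|}\int_\Omega f(u_h^n)\,dx$; both handle it correctly.
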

\begin{proof}
For any $\theta_h, \eta_h\in \mathring{V}_h$, we have  
$$
 (E_n^{\rm CH})''(\theta_h;u_h^{n-1})(\eta_h, \eta_h) = \frac{1}{{\epsilon}}\int_{\Omega}\bigl(3(u_h^{n-1}+\theta_h)^2-1\bigr)\eta_h^2 ~dx +
\frac{1}{k_n}\|\nabla\Delta_h^{-1}\eta_h\|_{L^2(\Omega)}^2 +
\epsilon\|\nabla \eta_h\|_{L^2(\Omega)}^2.
$$
Using Schwarz's inequality, we have
$$
\frac{1}{\epsilon}\|\eta_h\|_{L^2(\Omega)}^2  \leq \frac{1}{\epsilon}
(\Delta_h^{-1}\eta_h,\eta_h)^{1/2} (\Delta_h\eta_h,\eta_h)^{1/2}\leq
\frac{1}{4\epsilon^3}\|\nabla\Delta_h^{-1}\eta_h\|_{L^2(\Omega)}^2 +
\epsilon\|\nabla \eta_h\|_{L^2(\Omega)}^2.
$$
When $k_n\leq 4\epsilon^3$, 
\begin{align}\label{eq:FIS-CH-convex} 
(E_n^{\rm CH})''(\theta_h;u_h^{n-1})(\eta_h,\eta_h) \geq
\frac{1}{{\epsilon}}\int_{\Omega}3(u_h^{n-1}+\theta_h)^2 \eta_h^2~dx
+(\frac{1}{k_n}-\frac{1}{4\epsilon^3})\|\nabla\Delta_h^{-1}\eta_h\|_{L^2(\Omega)}^2\geq0,
\end{align}
where the strict inequality holds when $\eta_h\neq 0$.
This means that $(E_n^{\rm CH})(\cdot;u_h^{n-1})$ is strictly convex on
$\mathring{V}_h$. 

Now, taking $\eta_h = 1$ in \eqref{eq:FIS-CH}, we have $u_h^n \in
u_h^{n-1} + \mathring{V}_h$. Let $v_h = 1$ in \eqref{eq:FIS-CH}, we
have $\int_{\Omega}w_h^ndx =
\frac{1}{\epsilon}\int_{\Omega}f(u_h^n)dx$.  Then, the first equation
of \eqref{eq:FIS-CH} is equivalent to 
$$ 
w_h^n = \frac{1}{k_n}\Delta_h^{-1}(u_h^n -
    u_h^{n-1})+\frac{1}{\epsilon|\Omega|}\int_{\Omega}f(u_h^n)dx. 
$$
Therefore, \eqref{eq:FIS-CH} is shown to be 
\begin{equation} \label{eq:FIS-CH-u}
\epsilon(\nabla u_h^n, \nabla v_h) + \frac{1}{\epsilon}((I-Q_0)f(u_h^n), v_h)
- \frac{1}{k_n}(\Delta_h^{-1}(u_h^n-u_h^{n-1}), v_h) 
= 0 \qquad\forall~v_h \in V_h.
\end{equation}
where $Q_0: L^2(\Omega)\mapsto \mathbb R$ is the $L^2$ projection,
namely $ Q_0v=\frac{1}{|\Omega|}\int_\Omega vdx$.  Let
$\theta_h=u_h^n-u_h^{n-1} \in \mathring{V}_h$. Note that
$Q_0\theta_h=Q_0\Delta^{-1}\theta_h=0$, we can then write
\eqref{eq:FIS-CH-u} as
\begin{equation*}
\epsilon(\nabla (u_h^{n-1}+\theta_h), \nabla (I-Q_0)v_h) +
\frac{1}{\epsilon}(f(u_h^{n-1} + \theta_h), (I-Q_0)v_h)
- \frac{1}{k_n}(\Delta_h^{-1}\theta_h, (I-Q_0)v_h) 
= 0 \quad\forall~v_h \in V_h.
\end{equation*}
This means that
\begin{equation*}
\epsilon(\nabla (u_h^{n-1}+\theta_h), \nabla v_h) +
\frac{1}{\epsilon}(f(u_h^{n-1}+\theta_h), v_h) -
\frac{1}{k_n}(\Delta_h^{-1}\theta_h, v_h) = 0 \qquad\forall~v_h
\in \mathring V_h,
\end{equation*}
which can be recast as $(E_n^{\rm CH})'(\theta_h; u_h^{n-1})(v_h) =
0$. The unique solvability and energy stability \eqref{eq:CH-energy}
then follows from the similar argument in Theorem
\ref{thm:convexity-FIS-AC}. 
\end{proof}

\subsection{Convex splitting schemes and their equivalence to fully
  implicit schemes} \label{sec:CSS-FIS}
As we have seen before, convexity is a very desirable property of the
discretize scheme and fully implicit scheme is only convex when $k$ is
sufficiently small. When $k$ is not sufficiently small, the
non-convexity of the discrete scheme comes from the fact that the
potential function $F$ in \eqref{double-well} is not convex.
The convex splitting scheme (CSS in short) stems from splitting  the
non-convex potential function $F$ given by \eqref{double-well} into
the difference between two convex functions:
\begin{equation} \label{convex-splittingF}
F(u)=F_+(u)-F_-(u), \quad \mbox{with} \quad  F_+(u)=\frac{1}{4}(u^4+1), \quad F_-(u)=\frac{1}{2}u^2.
\end{equation}

\subsubsection{A convex splitting scheme for the Allen-Cahn model}
\label{subsec:CSS-AC}
In view of Theorem \ref{thm:convexity-FIS-AC}, a CSS be obtained by
making the non-convex part, namely $-F_-(\cdot)$ in
\eqref{convex-splittingF}, explicit in some way, and it can be
characterized by the minimization of a convex functional:
\begin{equation}
  \label{CSS-min}
u_h^n = \underset{u_h\in V_h}{\mathrm{argmin}} 
\bigg\{
\int_\Omega \Bigl( \frac12 |\nabla u_h|^2
+ \frac{1}{\epsilon^2} [F_+(u_h)-\hat F_-(u_h;u_h^{n-1})] \Bigr)\, dx
+
\frac{1}{2k_n}\int_{\Omega}(u_h- u_h^{n-1})^2dx
 \bigg\},
\end{equation}
where $\hat F_-(u_h;u_h^{n-1}) $ is the linearization of $F_-(\cdot)$ at
$u_h^{n-1}$, that is, $\hat F_-(u_h;u_h^{n-1})
=F_-(u_h^{n-1})+F_-^{\prime}(u_h^{n-1}) (u_h-u_h^{n-1})$.

The variational formulation of \eqref{CSS-min} is the following
well-known CSS: Find $u_h^n\in V_h$ for $n=1,2,\cdots$, such that
\begin{equation}\label{css}
(\frac{u_h^{n}-u_h^{n-1}}{k_n},v_h)+(\nabla u_h^{n},\nabla
v_h)+\frac{1}{\epsilon^2}((u_h^{n})^3-u_h^{n-1},v_h)=0
\qquad\forall v_h\in V_h.
\end{equation}

\begin{theorem} \cite{Eyre98}
The CSS scheme \eqref{css} is unconditionally energy stable.   
\end{theorem}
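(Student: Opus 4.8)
The plan is to exploit the variational characterization \eqref{CSS-min}: the scheme \eqref{css} is precisely the Euler--Lagrange equation of the functional
\begin{equation*}
\Phi_n(u_h) := \int_\Omega \Bigl( \frac12 |\nabla u_h|^2 + \frac{1}{\epsilon^2}\bigl[F_+(u_h)-\hat F_-(u_h;u_h^{n-1})\bigr]\Bigr)\,dx + \frac{1}{2k_n}\int_\Omega (u_h-u_h^{n-1})^2\,dx ,
\end{equation*}
which is strictly convex and coercive on $V_h$ (because $F_+$ is convex, $\hat F_-(\cdot;u_h^{n-1})$ is affine, and the quadratic penalty is positive definite), so that $u_h^n = \underset{u_h\in V_h}{\mathrm{argmin}}\,\Phi_n(u_h)$ exists and is unique, exactly as in the proof of Theorem~\ref{thm:convexity-FIS-AC}. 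The whole argument then reduces to comparing $\Phi_n(u_h^n)$ with $\Phi_n(u_h^{n-1})$.

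First I would evaluate $\Phi_n$ at the old iterate: the penalty term vanishes, and since $\hat F_-(u_h^{n-1};u_h^{n-1}) = F_-(u_h^{n-1})$ the bracket collapses to $F_+(u_h^{n-1})-F_-(u_h^{n-1}) = F(u_h^{n-1})$, hence $\Phi_n(u_h^{n-1}) = J_\epsilon^{\rm AC}(u_h^{n-1})$. Next I would bound $J_\epsilon^{\rm AC}(u_h^n)$ from above by $\Phi_n(u_h^n)$: dropping the nonnegative penalty term and using $F(u_h^n) = F_+(u_h^n) - F_-(u_h^n) \le F_+(u_h^n) - \hat F_-(u_h^n;u_h^{n-1})$ --- the inequality here being exactly the statement that the tangent line of the convex function $F_-$ at $u_h^{n-1}$ lies below $F_-$ --- gives $J_\epsilon^{\rm AC}(u_h^n) \le \Phi_n(u_h^n)$. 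Chaining this with the minimization property $\Phi_n(u_h^n)\le\Phi_n(u_h^{n-1})$ yields $J_\epsilon^{\rm AC}(u_h^n)\le J_\epsilon^{\rm AC}(u_h^{n-1})$ with no restriction on $k_n$, which is the claim.

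An alternative, more in the spirit of the original argument in \cite{Eyre98}, is to test \eqref{css} directly with $v_h = u_h^n - u_h^{n-1}\in V_h$, use the polarization identity $(\nabla u_h^n,\nabla(u_h^n-u_h^{n-1})) = \frac12\|\nabla u_h^n\|_{L^2(\Omega)}^2 - \frac12\|\nabla u_h^{n-1}\|_{L^2(\Omega)}^2 + \frac12\|\nabla(u_h^n-u_h^{n-1})\|_{L^2(\Omega)}^2$, and the convexity of $F_+$ and $F_-$ in the forms $F_+'(u_h^n)(u_h^n-u_h^{n-1}) \ge F_+(u_h^n)-F_+(u_h^{n-1})$ and $-F_-'(u_h^{n-1})(u_h^n-u_h^{n-1}) \ge -\bigl(F_-(u_h^n)-F_-(u_h^{n-1})\bigr)$, to obtain the sharper dissipation inequality
\begin{equation*}
J_\epsilon^{\rm AC}(u_h^n) + \frac{1}{k_n}\|u_h^n-u_h^{n-1}\|_{L^2(\Omega)}^2 \le J_\epsilon^{\rm AC}(u_h^{n-1}).
\end{equation*}

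I do not expect a genuine obstacle here: in contrast with the fully implicit case, the splitting \eqref{convex-splittingF} is engineered precisely so that both $F_+$ and $F_-$ are convex, which is what forces every term to fall on the right side independently of $k_n$. The only points needing care are bookkeeping ones --- applying the convexity inequalities at the correct base point (the implicit $u_h^n$ for $F_+$, the explicit $u_h^{n-1}$ for the linearized $F_-$) and in the correct direction --- together with recalling that the well-posedness of \eqref{css} is already contained in the strict-convexity-plus-coercivity argument used for Theorem~\ref{thm:convexity-FIS-AC}.
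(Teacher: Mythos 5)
Your argument is correct, and both of your routes go through: in the minimization argument, $\Phi_n(u_h^{n-1})=J_\epsilon^{\rm AC}(u_h^{n-1})$ because the penalty vanishes and $\hat F_-(u_h^{n-1};u_h^{n-1})=F_-(u_h^{n-1})$, the tangent-line inequality $F_-(u_h^n)\ge \hat F_-(u_h^n;u_h^{n-1})$ gives $J_\epsilon^{\rm AC}(u_h^n)\le \Phi_n(u_h^n)$ (and keeping the penalty even yields $J_\epsilon^{\rm AC}(u_h^n)+\frac{1}{2k_n}\|u_h^n-u_h^{n-1}\|_{L^2(\Omega)}^2\le J_\epsilon^{\rm AC}(u_h^{n-1})$), and the minimizing property closes the chain with no condition on $k_n$; your second argument is the classical Eyre-type computation and the convexity inequalities are applied at the right base points and in the right directions. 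Where you differ from the paper: the paper does not reprove the theorem at all (it is quoted from \cite{Eyre98}); instead, its own explanation of unconditional stability is structural, via Theorem~\ref{thm:css-fis}, which identifies the CSS \eqref{css} with the FIS \eqref{eq:FIS-AC} run at the reduced step $k_n'=\frac{\epsilon^2}{k_n+\epsilon^2}k_n<\epsilon^2$, so that the convexity and energy law of Theorem~\ref{thm:convexity-FIS-AC} apply automatically. Your direct proof buys a self-contained argument that isolates exactly which convexity is used (convexity of $F_-$ for the tangent-line bound, convexity of $F_+$ or of $\Phi_n$ for the comparison) and produces an explicit dissipation term; the paper's route buys more, namely the reinterpretation of the CSS as an FIS with a rescaled time step, which is what drives its accuracy/time-delay discussion in Remark~\ref{remark:delay} (and, incidentally, yields the slightly stronger dissipation rate $\frac{1}{2k_n'}=\frac{1}{2k_n}+\frac{1}{2\epsilon^2}$). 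Both are valid; yours is the more elementary and standard proof of the stated stability result.
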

At the first glance, the above result looks incredibly remarkable. As
we have seen above, even a fully implicit scheme can not be
unconditionally energy-stable, but as a partially implicit (or
explicit) scheme, CSS is unconditionally energy-stable.  Although, as
we discussed before, we can not quite relate the energy-stability in a
nonlinear scheme to the standard stability concept in a standard
linear scheme,  it is quite incredible that a partially implicit (or
explicit) scheme is actually more stable than a fully implicit scheme!

This remarkable phenomenon can be explained by the following result. 

\begin{theorem}\label{thm:css-fis}
The CSS \eqref{css} can be recast as the FIS \eqref{eq:FIS-AC}
with different time step size:
\begin{equation} \label{kprime}
k_n' = \frac{\epsilon^2}{k_n+\epsilon^2}k_n.  
\end{equation}
\end{theorem}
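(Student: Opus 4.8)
The plan is to show that, started from the same iterate $u_h^{n-1}$, the variational equation \eqref{css} for the unknown $u_h^n\in V_h$ is \emph{literally identical} to the variational equation \eqref{eq:FIS-AC} in which $k_n$ is replaced by $k_n'$, and then to invoke unique solvability of both schemes to conclude that they deliver the same $u_h^n$. So the proof is essentially an algebraic coefficient-matching, followed by a one-line uniqueness remark.

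First I would separate in each scheme the terms carrying $u_h^n$ from those carrying $u_h^{n-1}$. Recalling that $f(u)=u^3-u$, the FIS \eqref{eq:FIS-AC} with step size $k_n'$ reads, for all $v_h\in V_h$,
\[
\Bigl(\tfrac{1}{k_n'}-\tfrac{1}{\epsilon^2}\Bigr)(u_h^n,v_h)+(\nabla u_h^n,\nabla v_h)+\tfrac{1}{\epsilon^2}\bigl((u_h^n)^3,v_h\bigr)=\tfrac{1}{k_n'}(u_h^{n-1},v_h),
\]
whereas the CSS \eqref{css} reads
\[
\tfrac{1}{k_n}(u_h^n,v_h)+(\nabla u_h^n,\nabla v_h)+\tfrac{1}{\epsilon^2}\bigl((u_h^n)^3,v_h\bigr)=\Bigl(\tfrac{1}{k_n}+\tfrac{1}{\epsilon^2}\Bigr)(u_h^{n-1},v_h).
\]
Identifying the coefficient of $(u_h^n,v_h)$ on the left forces $\tfrac{1}{k_n'}-\tfrac{1}{\epsilon^2}=\tfrac{1}{k_n}$, and identifying the coefficient of $(u_h^{n-1},v_h)$ on the right forces $\tfrac{1}{k_n'}=\tfrac{1}{k_n}+\tfrac{1}{\epsilon^2}$; the two requirements coincide, and solving $\tfrac{1}{k_n'}=\tfrac{1}{k_n}+\tfrac{1}{\epsilon^2}$ for $k_n'$ gives exactly \eqref{kprime}. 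Conversely, substituting \eqref{kprime} into the first display turns it back into the second, so the two equations are the same.

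Finally I would note that $k_n'=\dfrac{\epsilon^2 k_n}{\epsilon^2+k_n}<\epsilon^2$, so Theorem \ref{thm:convexity-FIS-AC} applies to the FIS run with step $k_n'$: its discrete energy is strictly convex and the scheme is uniquely solvable; likewise \eqref{css} is the Euler--Lagrange equation of the strictly convex functional in \eqref{CSS-min} and is uniquely solvable. Since we have just shown the two Euler--Lagrange equations are identical, their unique solutions agree, proving the claim. I do not expect a genuine obstacle here — the content is a short rearrangement plus a uniqueness observation. The only point deserving care is the precise reading of the equivalence: it is step-by-step with the \emph{same} $u_h^{n-1}$, so that running CSS with uniform step $k$ for $N$ steps reproduces the FIS iterates but advanced only to time $N k' < N k$, which is the ``time-delay'' interpretation mentioned in the introduction and which I would record as an immediate corollary.
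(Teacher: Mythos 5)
Your proof is correct and follows essentially the same route as the paper: the paper substitutes the identity $(u_h^n)^3-u_h^{n-1}=f(u_h^n)+(u_h^n-u_h^{n-1})$ into \eqref{css} and regroups, which is exactly your coefficient-matching yielding $\tfrac{1}{k_n'}=\tfrac{1}{k_n}+\tfrac{1}{\epsilon^2}$. The added uniqueness remark via Theorem \ref{thm:convexity-FIS-AC} is harmless but not needed, since the two variational equations are literally identical once $k_n'$ is chosen as in \eqref{kprime}.
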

\begin{proof}
We write that 
$$
(u_h^n)^3 - u_h^{n-1} = f(u_h^n) + (u_h^n - u_h^{n-1}).
$$
Substituting the above identity into \eqref{css} and regrouping the
term involving $u_h^n - u_h^{n-1}$, we obtain
\begin{align}\label{css_1}
\bigl((\frac{1}{k_n}+\frac{1}{\epsilon^2})(u_h^{n}-u_h^{n-1}),v_h\bigr)+(\nabla u_h^{n},\nabla
v_h)+\frac{1}{\epsilon^2}(f(u_h^{n}),v_h)=0
\qquad\forall v_h\in V_h, 
\end{align}
which is exactly the FIS with time step size \eqref{kprime}. 
\end{proof}

By comparing the condition for the time step size in Theorem
\ref{thm:convexity-FIS-AC} and \eqref{kprime}, the resulting time-step
constraint \eqref{kprime} in the CSS is actually more stringent 
to assure the convexity of the original FIS, as $k_n' < \epsilon^2$ for
any $\epsilon>0$. This also explains why the CSS is always
energy-stable thanks to the Theorem \ref{thm:convexity-FIS-AC}.

\begin{remark}\label{remark:delay}
We now make some remark on the implication of Theorem \ref{thm:css-fis}.
Let $u_h^{\rm FIS}(t_n)$ be the solution to \eqref{eq:FIS-AC} and
$u_h^{{\rm CSS}}(t_n)$ be the solution to \eqref{css}.  Then by
Theorem \ref{thm:css-fis}, we have
\begin{equation} \label{delay}
u_h^{{\rm CSS}}(t_n)=u_h^{{\rm FIS}}(\delta_n t_n), \quad \mbox{with}
\quad \delta_n=\frac{\epsilon^2}{k_n+\epsilon^2}. 
\end{equation}
Here, $\delta_n$ can be regarded as a delaying factor.  A larger time
step size $k_n$, which gives a smaller $\delta_n$, leads to a more
significant time-delay.  Even for a very small $k_n$, such a delay is
not negligible.  For example, if $k_n=\epsilon^2$, we have $\delta_n=1/2$.
Thus, $u_h^{{\rm CSS}}(t_n)=u_h^{{\rm FIS}}(\frac{t_n}{2})$.

Because of such a delay, it is expected and also numerically verified
that, quantitatively speaking, the CSS may have a reduced accuracy
although it gives qualitatively correct answer.  Furthermore such a
delay will diminish as $k_n\to 0$ since $ \lim_{k_n\to 0}\delta_n=1$. 

In summary, we conclude that the CSS has a special property that may
be known as ``delayed convergence'' in the following sense:
\begin{enumerate}
\item The CSS scheme is expected to eventually converge to the
  exact solution of the originally Allen-Cahn equation as $k_n\to 0$.
\item But for any given time step size $k_n$, the CSS would approximate 
better the exact solution at a delayed time. 
\end{enumerate}
\end{remark}

\paragraph{Test 1} \label{test1} 
In this test, the square domain $\Omega =(-1,1)^2$ is used to
investigate the performance of different numerical schemes, and the
initial condition is chosen as
\begin{align} \label{eq:initial-smooth}
u_0 = \tanh\bigl(\frac{d_0(x)}{\sqrt{2}\epsilon}\bigr). 
\end{align}
Here, $d_0(x)$ is the signed distance function from $x$ to the initial
curve $\Gamma_0: x^2+y^2 = 0.6^2$, i.e., $d_0(x) =
\sqrt{x^2+y^2}-0.6$.  Figure \ref{fig:AC-diff-h} and
\ref{fig:AC-diff-eps} displays the evolution of the radius with
respect to time. The singularity happens at $t = 0.18$, which is the
disappearing time. 

The numerical solutions of FIS and CSS with different $h$'s are
plotted in Figure \ref{fig:AC-diff-h}. When decreasing $h$, the FIS
approximates the exact solution well, while the CSS does not. 
The similar phenomenon happens with different $\epsilon$'s, as shown
in Figure \ref{fig:AC-diff-eps}.

\begin{figure}[!htbp]
\centering 
\captionsetup{justification=centering}
\subfloat[FIS]{
  \includegraphics[width=0.35\textwidth]{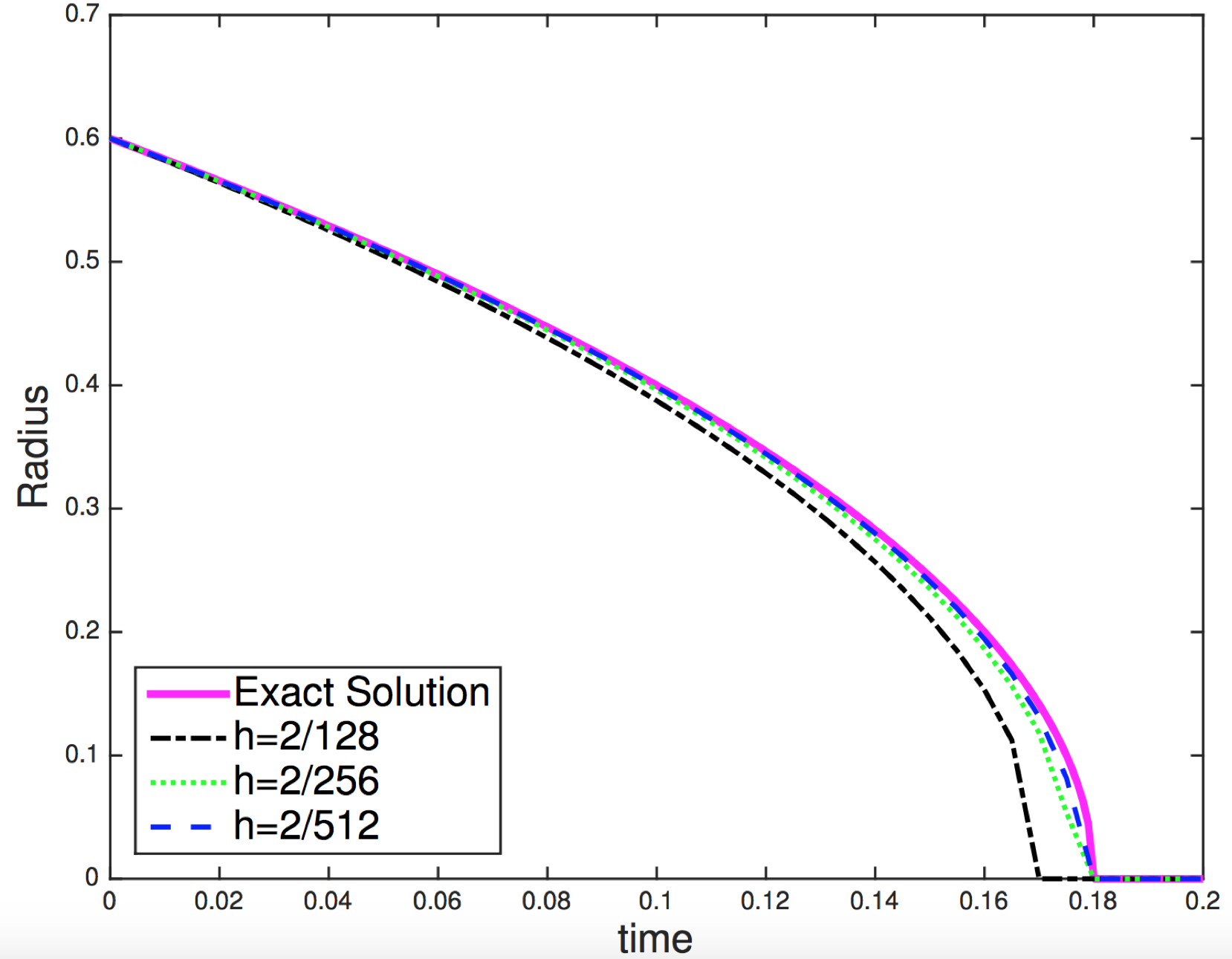}
  \label{fig:AC-diff-h-FIS}
}%
\subfloat[CSS]{
  \includegraphics[width=0.35\textwidth]{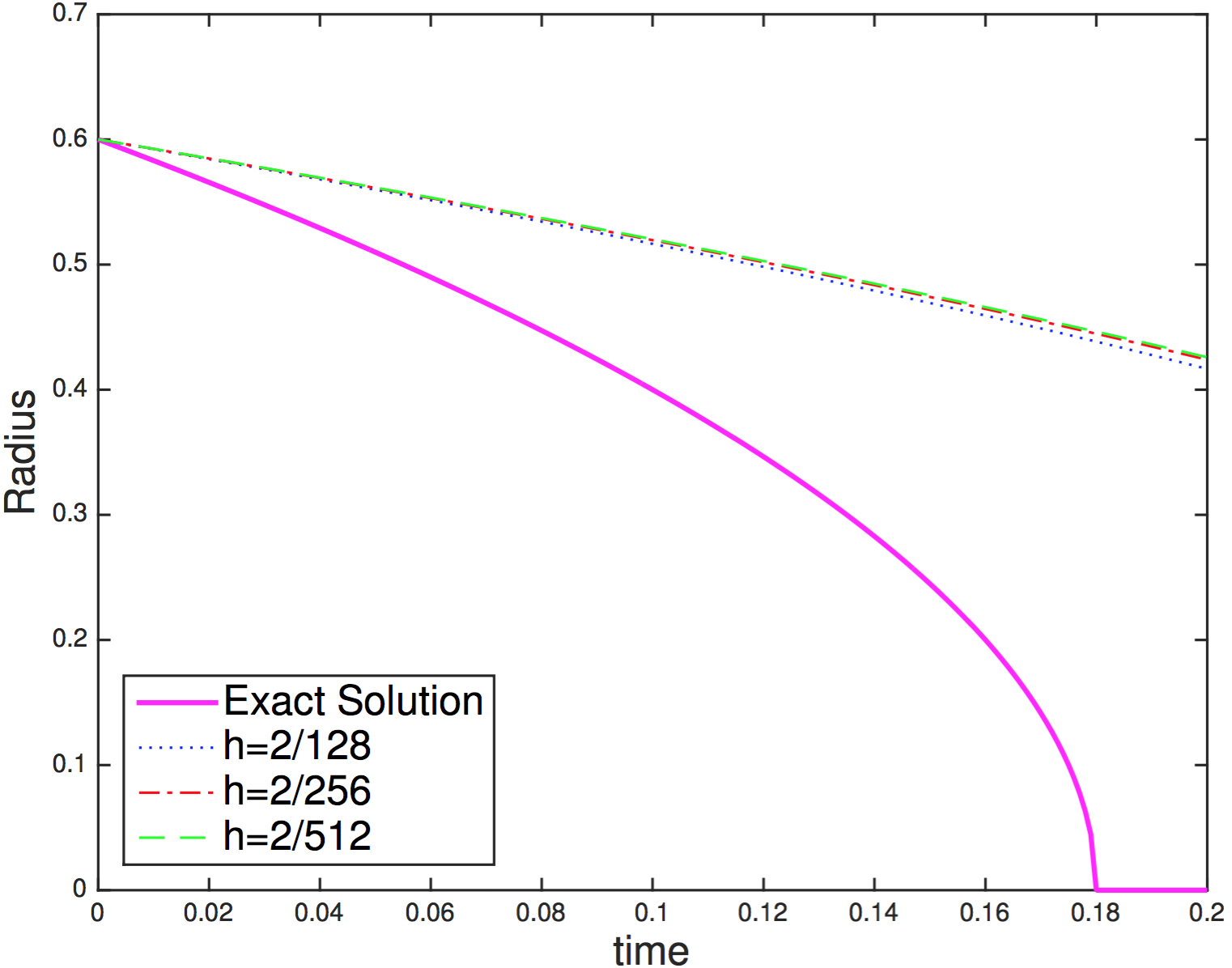}
  \label{fig:AC-diff-h-CCS}
}
\caption{Allen-Cahn equation: FIS and CSS with $\epsilon=0.02,
  k_n=0.0005$ and different $h$'s.} 
  \label{fig:AC-diff-h}
\end{figure}

\begin{figure}[!htbp]
\centering 
\captionsetup{justification=centering}
\subfloat[FIS]{
  \includegraphics[width=0.35\textwidth]{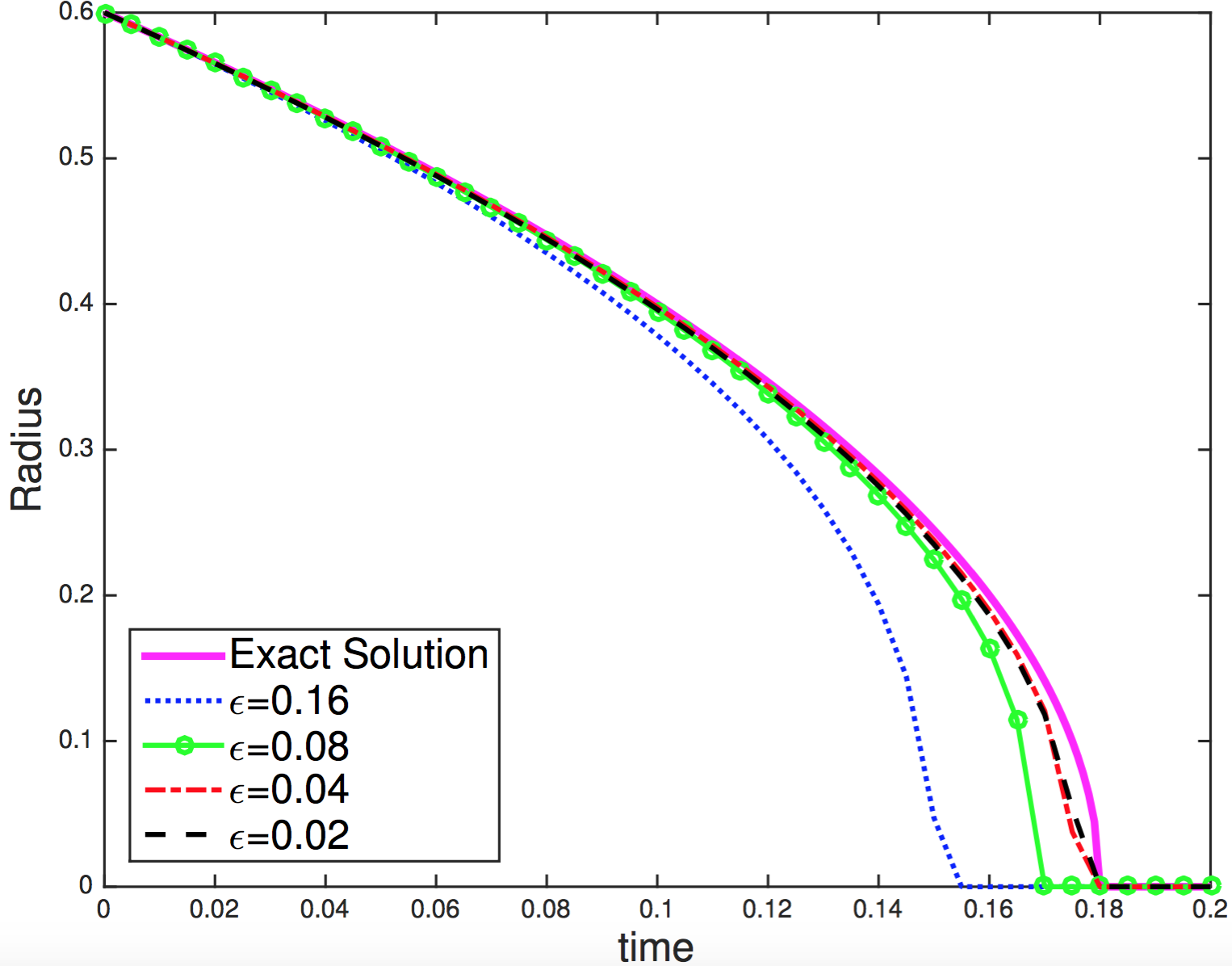}
  \label{fig:AC-diff-eps-FIS}
}%
\subfloat[CSS]{
  \includegraphics[width=0.35\textwidth]{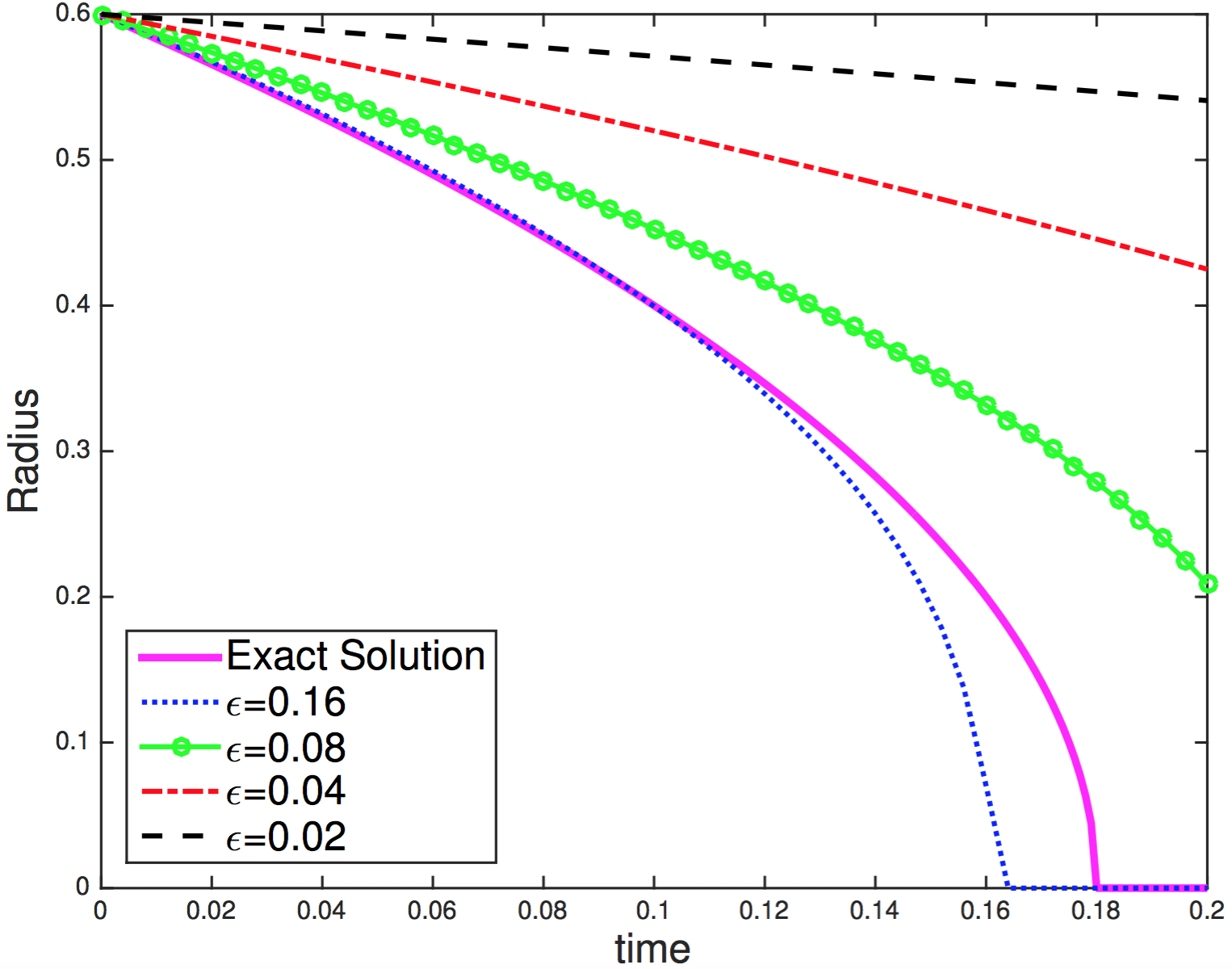}
  \label{fig:AC-diff-eps-CCS}
}
\caption{Allen-Cahn equation: FIS and CSS with $k_n=0.002,\ h=1/256$ and
  different $\epsilon$'s.} 
  \label{fig:AC-diff-eps}
\end{figure}

\paragraph{Test 2} \label{test2}
In this simulation, we minimize the discrete energy
\eqref{eq:FIS-AC-energy} for the Allen-Cahn equation at each time
step. The computational domain is $\Omega=(-1,1)^2$, and
parameter is $\epsilon =5\times 10^{-3}$. The initial value, shown
in Figure \ref{fig:4.1}, is chosen as
\begin{equation}
u_0(x,y) = \tanh\left(\frac{\sqrt{x^2+y^2}-0.6}{\sqrt{2}\epsilon}
\right).
\end{equation}
When $t$ increases, we expect the radius of the hole to decrease, as
shown in Figure \ref{fig:4.2}.

\begin{figure}[!htbp]
\centering 
\subfloat[Initial value of $u$]{
  \includegraphics[scale=0.30]{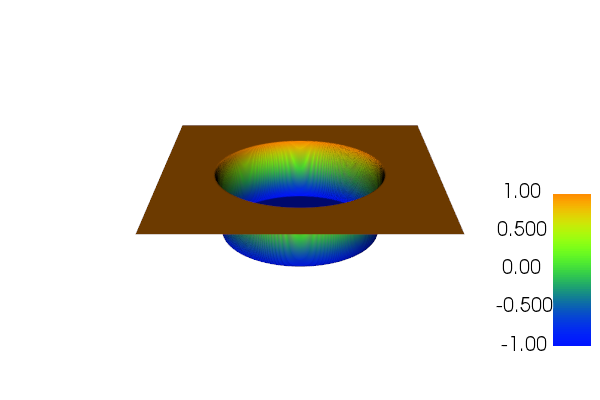}
  \label{fig:4.1}
}
\subfloat[Value of $u$ at $t=0.14$]{
  \includegraphics[scale=0.30]{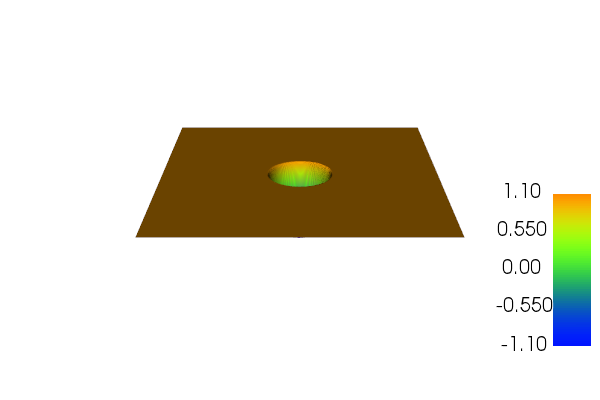}
  \label{fig:4.2}
}
\caption{The Allen-Cahn equation with smooth initial value: Values of
$u$ at different $t$'s.}
\end{figure}

Our goal is to test if the solution from the energy minimization scheme
approximates the physical solution even when the discrete energy is
non-convex. Recall that when $k_n\leq \epsilon^2$, the discrete energy
is convex.

We first test the dependency on the initial guess for the L-BFGS
minimization algorithm (cf. \cite{nocedal1980updating,
byrd1995limited}).  Here we choose $k_{1} = 10^{-3}$, which leads
to the non-convex discrete energy \eqref{eq:energy-AC}. Figure
\ref{fig:4.1b} shows the global minimizer by using $u_0(x,y)$ as the
initial guess for the L-BFGS, which is quite similar to the
solution obtained with $k_n=10^{-5}$, see Figure \ref{fig:4.3}; Figure
\ref{fig:4.2b} shows a local minimizer by using the initial guess for
L-BFGS as $1-u_0(x,y)$. We observe that when the initial guess is the
solution at previous time step, the local minimizer has lowest
discrete energy, and that the solution with the lowest discrete energy
is the best approximation to the solution obtained in the convex case.

\begin{figure}[!htbp]
\centering 
\subfloat[Global minimum with $E_{1}^{\rm AC} = 718.9$]{
  \includegraphics[scale=0.4]{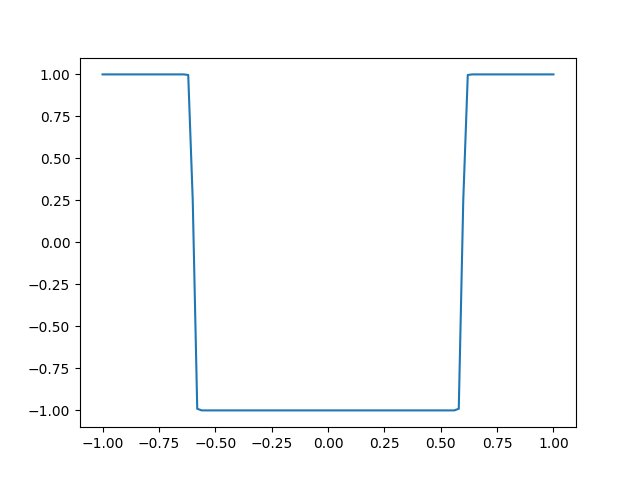}
  \label{fig:4.1b}
}
\subfloat[Local minimum with $E_{1}^{\rm AC} = 2206.4$]{
  \includegraphics[scale=0.4]{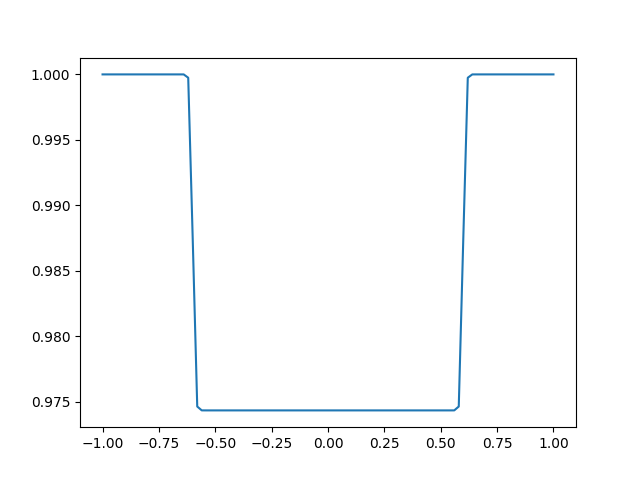}
  \label{fig:4.2b}
}
\caption{The Allen-Cahn equation with smooth initial value: Minimizers
at $t=10^{-3}$ for different initial guesses in the L-BFGS
algorithm, $k_{1} = 10^{-3}$.}
\end{figure}

We compare the solution for $k_n=10^{-5}$, for the case in
which the discrete energy is convex, with the solutions for
$k_n=10^{-4}$ and $k_n=10^{-3}$, for the cases in which the discrete
energies are non-convex. For the L-BFGS
algorithm, the initial guess is set to be the solution at previous
time step.  Figure \ref{fig:4.3} displays the cross-sectional
solutions at $y=0$ at different $t$'s.  We observe that energy
minimization version of fully implicit schemes performs all well with
different time step sizes. 

\begin{figure}[!htbp]
\centering 
\subfloat[$t=0$]{
  \includegraphics[scale=0.4]{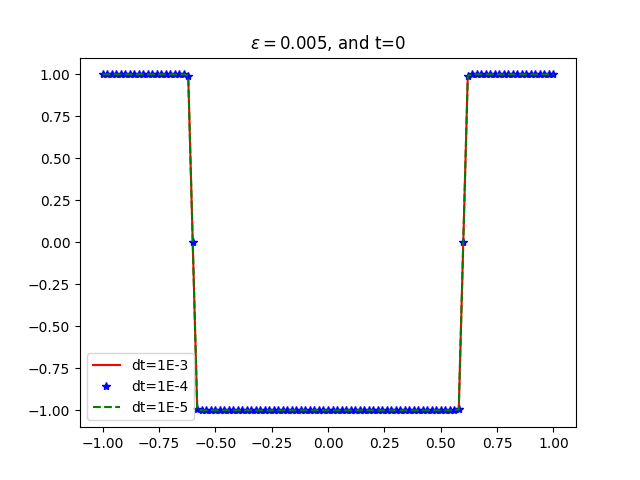}
  \label{fig:4.3-1}
}%
\subfloat[$t=0.05$]{
 \includegraphics[scale=0.4]{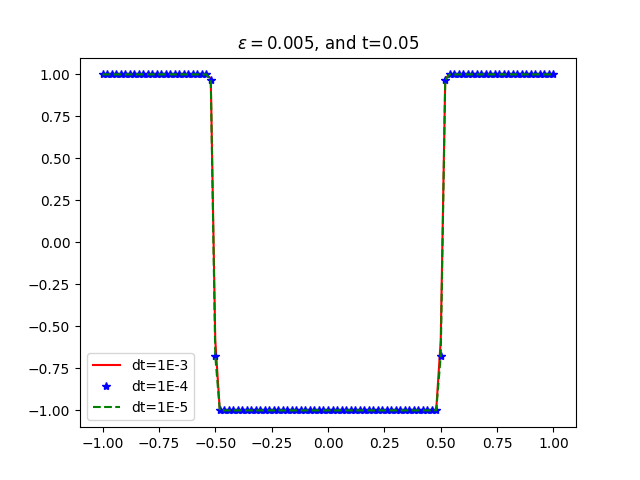}
  \label{fig:4.3-2}
} \\
\subfloat[$t=0.1$]{
 \includegraphics[scale=0.4]{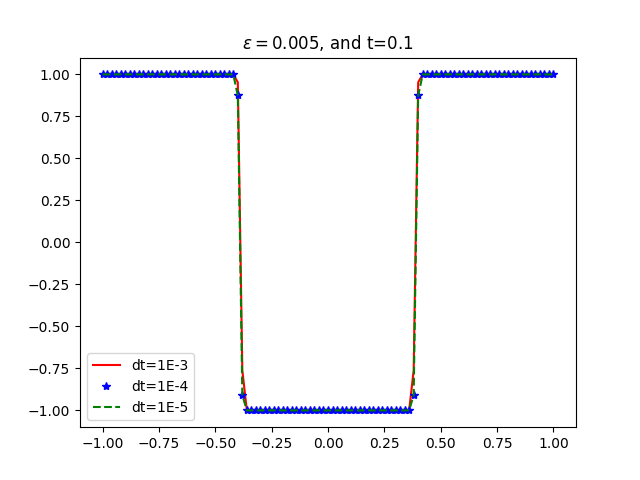}
  \label{fig:4.3-3}
}%
\subfloat[$t=0.14$]{
 \includegraphics[scale=0.4]{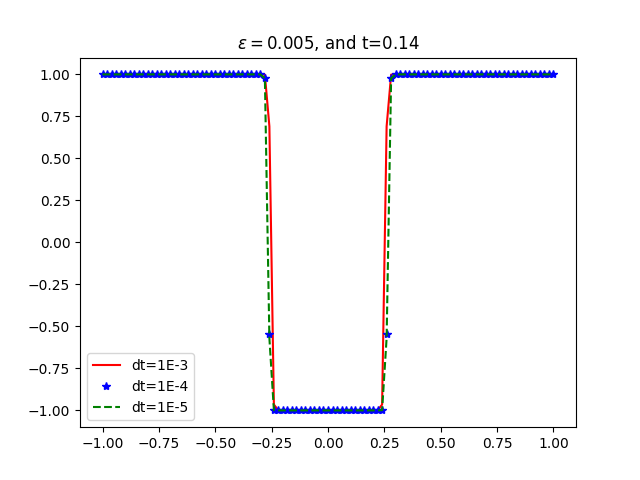}
  \label{fig:4.3-4}
}%
\caption{The Allen-Cahn equation with smooth initial value: Plot of
  the cross-sectional solutions $u_h(x,0)$ at different $t$'s.}
\label{fig:4.3}
\end{figure}

Since the initial guess for the L-BFGS algorithm is random, we
conclude that the L-BFGS algorithm does not depend on the initial
guess when the solution is smooth enough.  We also compare the evolutions of
physical energies $J_\epsilon^{\rm AC}$ for the three cases in Figure
\ref{fig:4.4}, which shows that the energy minimization version of
fully implicit scheme is energy-stable. 

\begin{figure}[!htbp]
\begin{center}
 \includegraphics[scale=0.6]{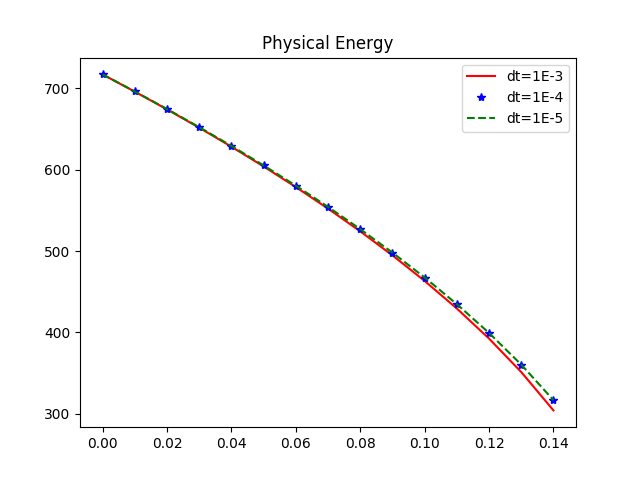}
\end{center}
\caption{The Allen-Cahn equation with smooth initial value: Evolutions
of the physical energies.}
\label{fig:4.4}
\end{figure}

\paragraph{Test 3} \label{test3}
In this set of simulations, we minimize the discrete energy
\eqref{eq:FIS-AC-energy} for the Allen-Cahn equation with
$\epsilon = 5\times 10^{-3}$ at each time step. The computational
domain is chosen as $\Omega=(-1,1)^2$, while the initial
value $u_0(x,y)$ is randomly chosen. 

In order to smooth the initial value, we first compute the solution
from $t = 0$ to $t = 2\times 10^{-3}$ with $k = 10^{-5}$, namely 
$$ 
k_n = 10^{-5} \qquad \text{for }n = 1, 2, \cdots, 200.
$$ 
Then, we switch for different time step sizes with the energy
minimization version of fully implicit scheme. This is needed only when $k_n \geq 10^{-3}$.

After the smoothing the random initial value, we first test the
dependency on the initial guess for the L-BFGS minimization algorithm. 
Here we choose $k_{201} = 10^{-3}$, which leads to the non-convex
discrete energy \eqref{eq:FIS-AC-energy}. The reference solution is
obtained by evolving the Allen-Cahn equation with $k_n = 10^{-5}$
(convex case). Figure \ref{fig:global-local-minimizer} shows the
different local minimizers from different initial guesses. We observe
that: (1) When the initial guess is the solution at previous time
step, the local minimizer has lowest discrete energy; (2) The solution
with the lowest discrete energy is the best approximation to the
reference solution; (3) When the initial guesses are random chosen, we
obtain several different local minimizers. This implies that the
result obtained from L-BFGS does depend on the initial guess when the
solution is not smooth enough. Therefore, we will (and recommend to)
choose the solution at previous time step as the initial guess for the
L-BFGS algorithm.

\begin{figure}[!htbp]
\centering 
\subfloat[Reference solution]{
  \includegraphics[scale=0.25]{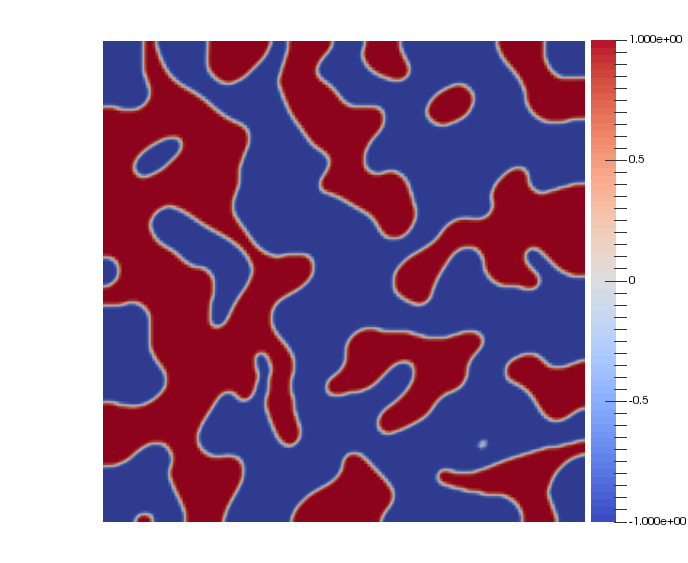} 
  \label{fig:global-local-ref}
}%
\subfloat[$u_h^{200}$ as initial guess, $E_{201}^{\rm AC} = 3848.7$]{
  \includegraphics[scale=0.25]{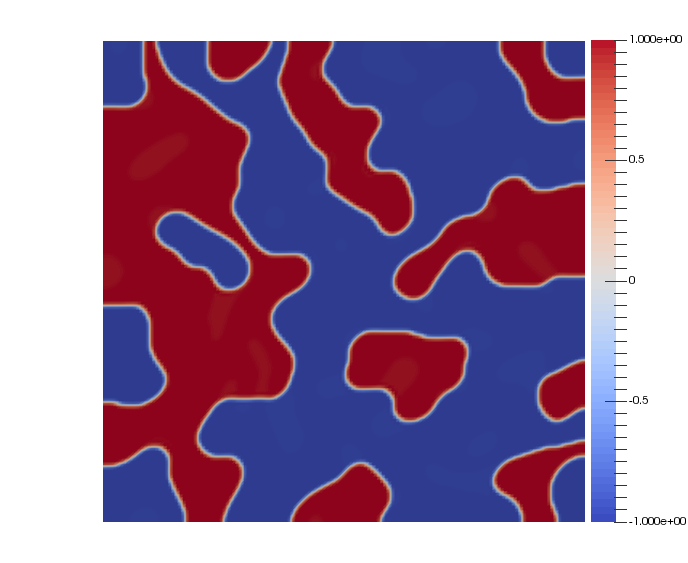} 
  \label{fig:global-local-global}
} \\
\subfloat[Random initial guess, $E_{201}^{\rm AC} = 4238.5$]{
  \includegraphics[scale=0.25]{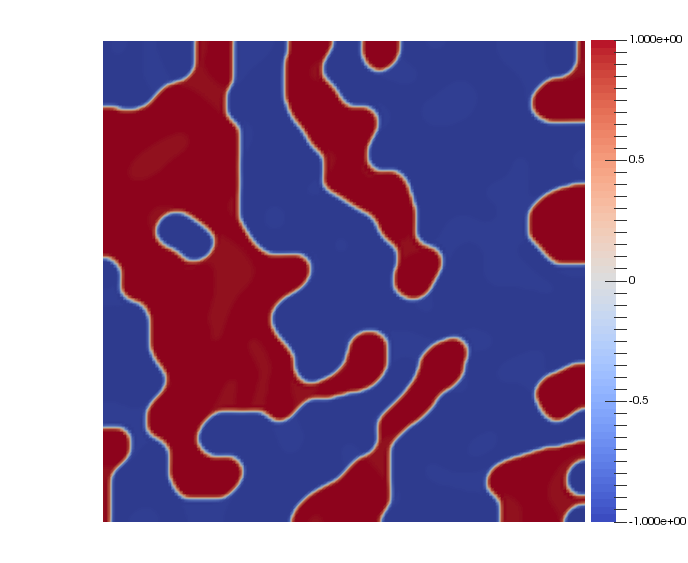} 
  \label{fig:global-local-local1}
}%
\subfloat[Random initial guess, $E_{201}^{\rm AC} = 4341.7$]{
  \includegraphics[scale=0.25]{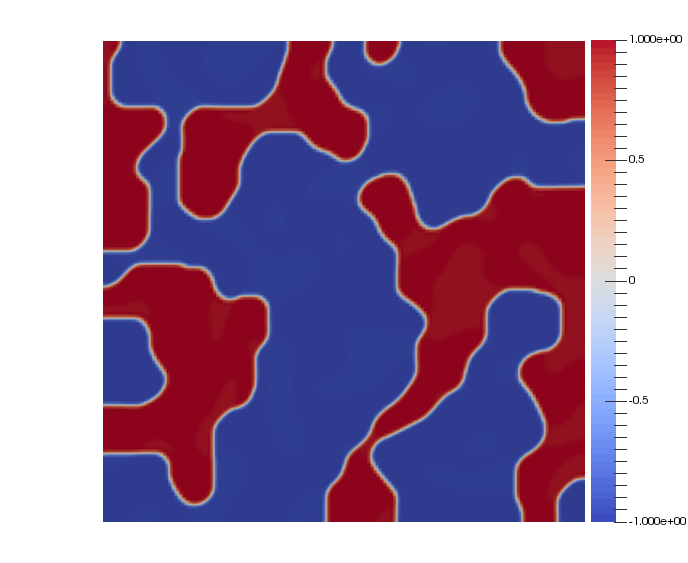}
  \label{fig:global-local-local2}
}%
\caption{The Allen-Cahn equation with random initial value: Minimizers
at $t=3\times 10^{-3}$ for different initial guesses in the L-BFGS
algorithm, $k_{201} = 10^{-3}$.}
\label{fig:global-local-minimizer}
\end{figure}

Next, we evolve the Allen-Cahn equation with different time step sizes 
after $t = 2\times 10^{-3}$ to see the two phases regroup. Three
different computations with $k_n = 10^{-5}$ (convex case), $k_n =
10^{-4}$ and $k_n = 10^{-3}$ (non-convex cases) are considered. In
Figure \ref{fig:4.5} shows the random initial value and the
evolutions of the numerical solutions at different $t$'s. It can be
observed that the solutions in all these cases behave similarly.    
In addition, for the given random initial condition, the evolution of
physical solution and physical energy seem a little
bit faster than the others when choosing $k_n = 10^{-3}$, as shown in
Figure \ref{fig:random-energy}. This is most likely because of the
time discretization error for the large time step size. Furthermore,    
the evolutions of the physical energies show the energy-stability of
the energy minimization version of the fully implicit scheme, which is
in agreement with the Theorem \ref{thm:convexity-FIS-AC}.

\begin{figure}[!htbp]
\centering 
\subfloat[$t=0$, $k_n = 10^{-5}$]{
  \includegraphics[scale=0.17]{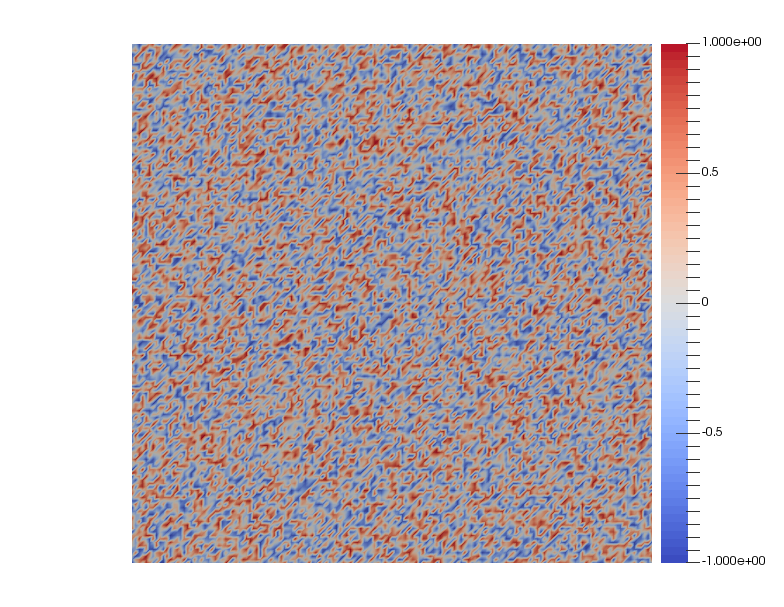}
}%
\subfloat[$t=0$, $k_n = 10^{-4}$]{
  \includegraphics[scale=0.17]{1e-5_t=0.png}
}%
\subfloat[$t=0$, $k_n = 10^{-3}$]{
  \includegraphics[scale=0.17]{1e-5_t=0.png}
} \\
\subfloat[$t=0.02$, $k_n = 10^{-5}$]{
  \includegraphics[scale=0.17]{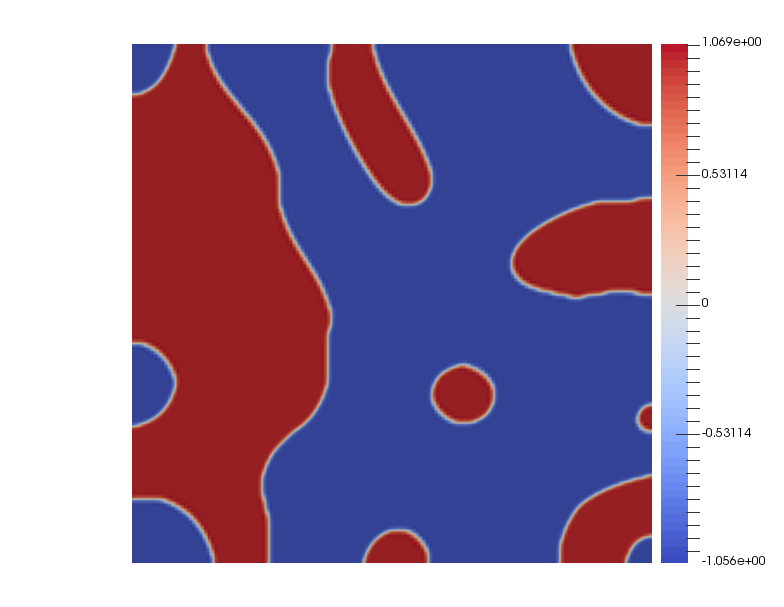} 
}%
\subfloat[$t=0.02$, $k_n = 10^{-4}$]{
  \includegraphics[scale=0.17]{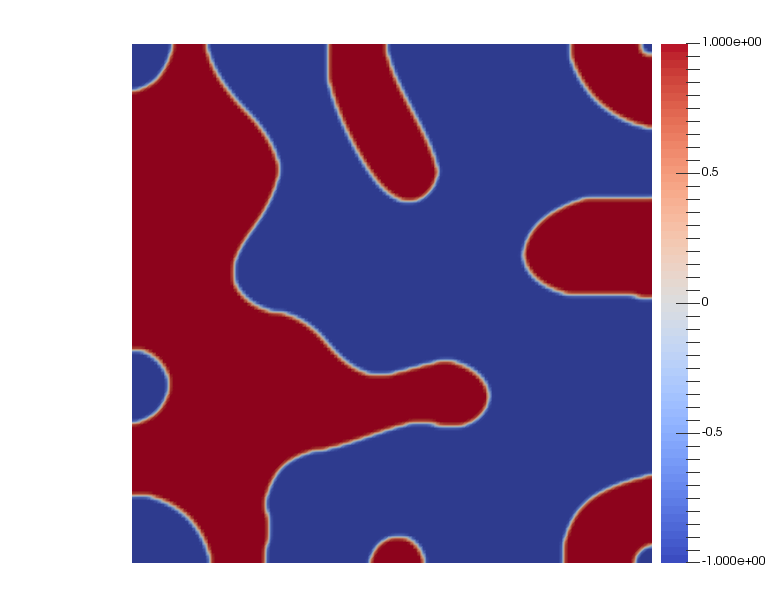}
}%
\subfloat[$t=0.02$, $k_n = 10^{-3}$]{
  \includegraphics[scale=0.17]{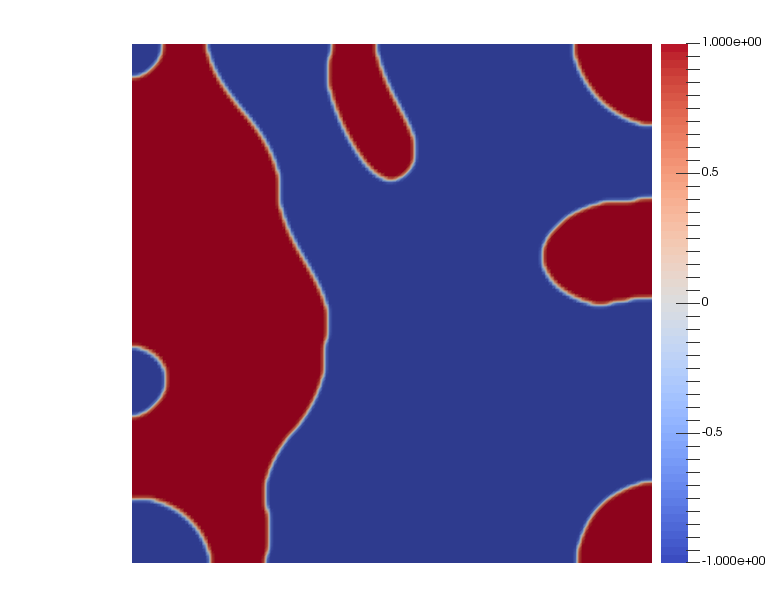}
} \\
\subfloat[$t=0.05$, $k_n = 10^{-5}$]{
  \includegraphics[scale=0.17]{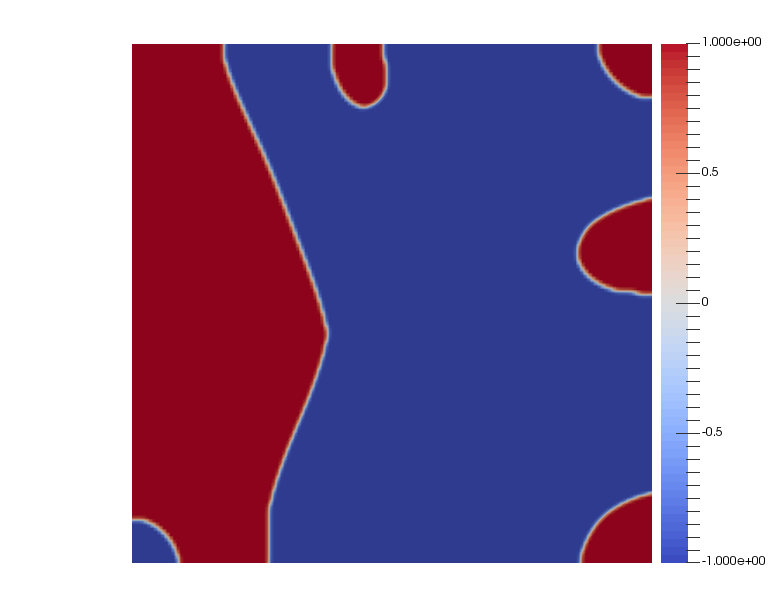}
}%
\subfloat[$t=0.05$, $k_n = 10^{-4}$]{
  \includegraphics[scale=0.17]{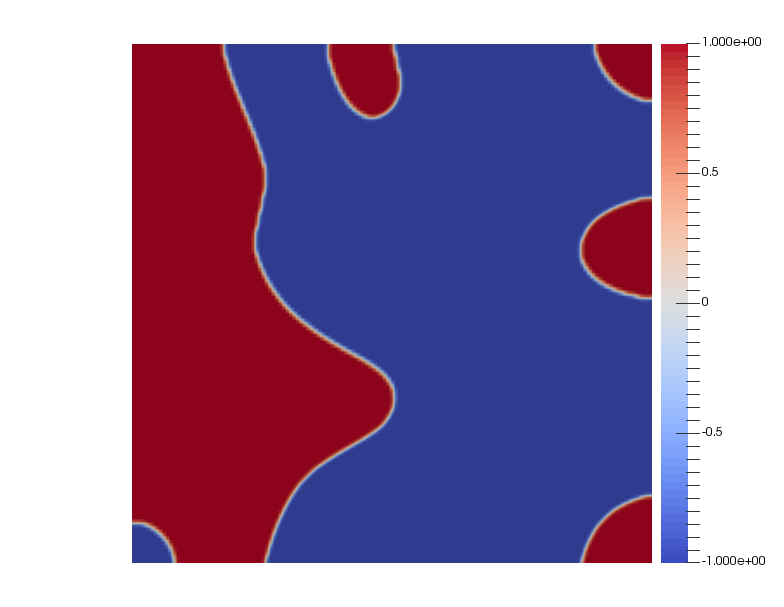}
}%
\subfloat[$t=0.05$, $k_n = 10^{-3}$]{
  \includegraphics[scale=0.17]{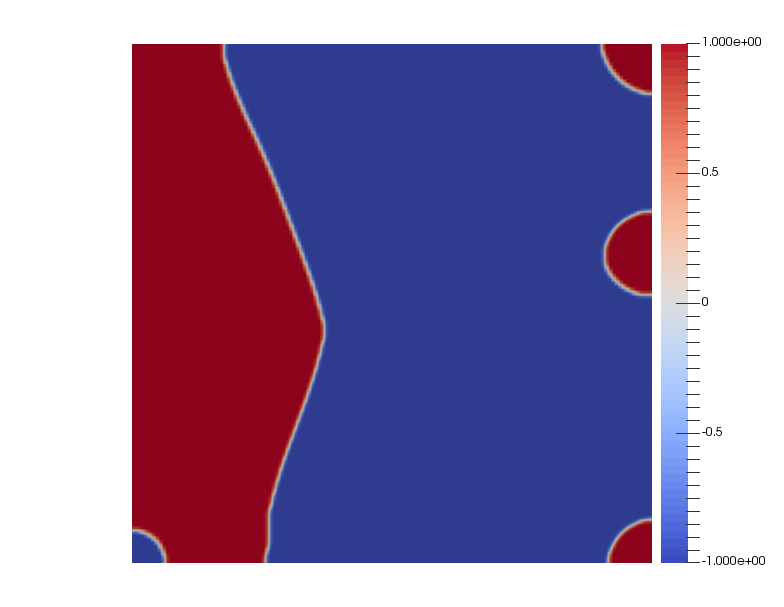}
}\\
\subfloat[$t=0.14$, $k_n = 10^{-5}$]{
  \includegraphics[scale=0.17]{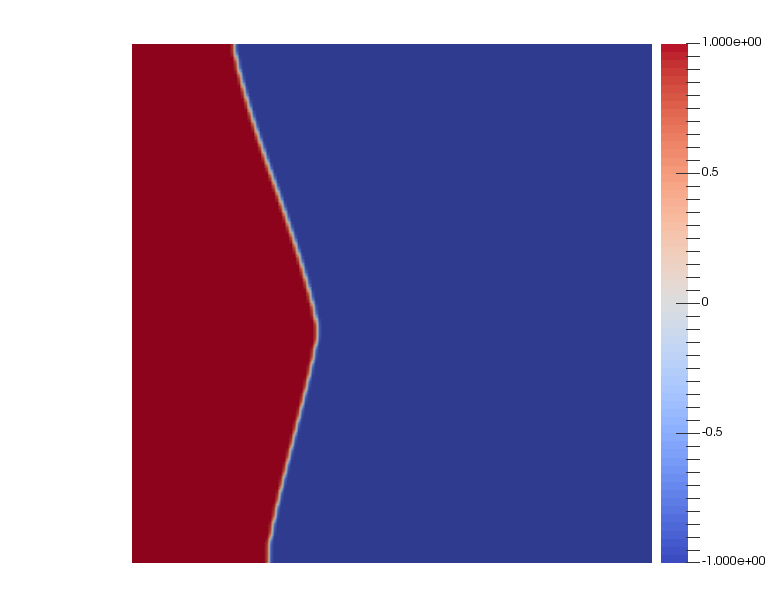}
}%
\subfloat[$t=0.14$, $k_n = 10^{-4}$]{
  \includegraphics[scale=0.17]{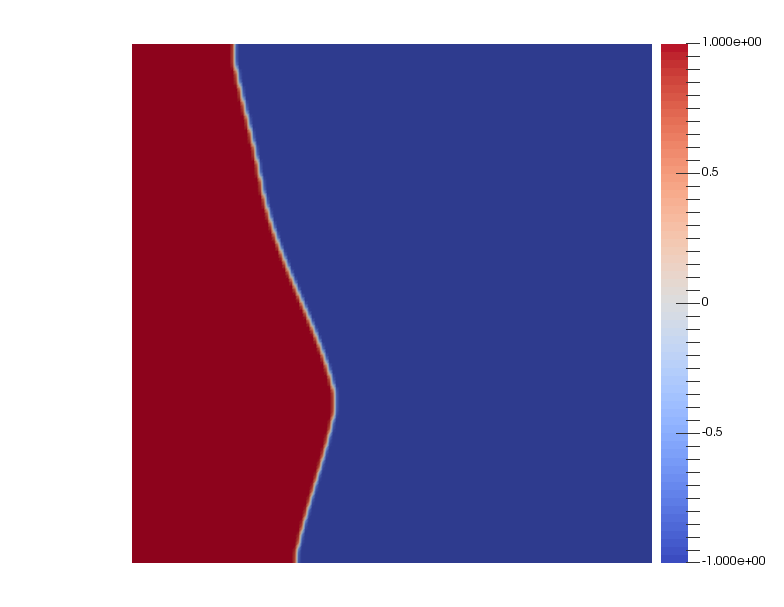}
}%
\subfloat[$t=0.14$, $k_n = 10^{-3}$]{
  \includegraphics[scale=0.17]{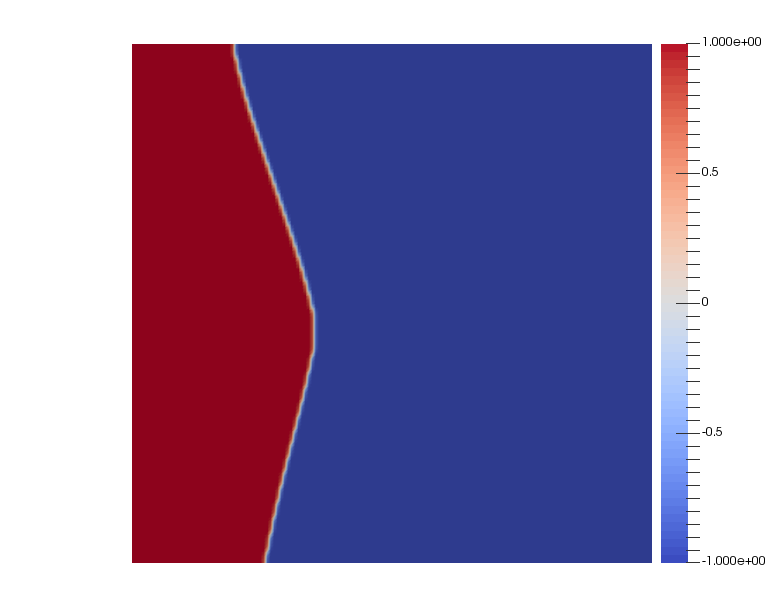}
}

\caption{The Allen-Cahn with random initial value: Plot of the
  solutions at different $t$'s.}
\label{fig:4.5}
\end{figure}

\begin{figure}[!htbp]
\begin{center}
 \includegraphics[scale=0.6]{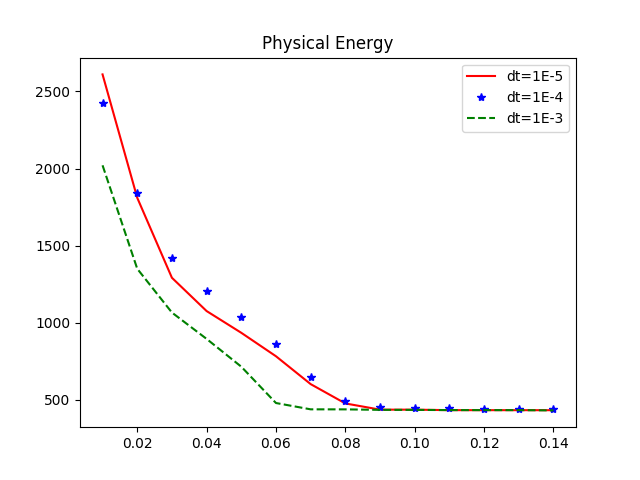}
\end{center}
\caption{The Allen-Cahn equation with random initial guess: Evolutions
  of physical energies.}
\label{fig:random-energy}
\end{figure}

\subsubsection{A convex splitting scheme for the Cahn-Hilliard model}
Similar to the Allen-Cahn model, a convex splitting scheme can also be
obtained for Cahn-Hilliard model as follows: Find $u_h^{n}\in V_h$ for
$n=1,2,\cdots, N$, such that
\begin{equation} \label{eq:CSS-CH}
\begin{aligned}
(\frac{u_h^{n}-u_h^{n-1}}{k_n},\eta_h)+(\nabla w_h^{n},\nabla \eta_h) &=0
\qquad \forall \,\eta_h\in V_h,\\
\epsilon (\nabla u_h^{n},\nabla v_h) + \frac{1}{\epsilon}((u_h^n)^3 -
u_h^{n-1},v_h)-(w_h^{n},v_h) &=0 \qquad \forall\,
  v_h\in V_h. 
\end{aligned}
\end{equation}

\begin{theorem}
The Discretization of the Cahn-Hilliard equation using the convex
splitting scheme is equivalent to the discretization of the following
equations using the fully implicit scheme: 
\begin{equation} \label{eq:perturbed-CH}
\begin{aligned}
u_t - \Delta w&= 0, \\
w + \epsilon \Delta u -
\frac{1}{\epsilon}f(u)-\frac{k_n}{\epsilon}u_t
&=0.
\end{aligned}
\end{equation}
\end{theorem}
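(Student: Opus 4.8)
The plan is to mimic the argument used for Theorem~\ref{thm:css-fis} in the Allen--Cahn case. The key algebraic observation is again that, with $f(u)=u^3-u$,
$$
(u_h^n)^3 - u_h^{n-1} = f(u_h^n) + (u_h^n - u_h^{n-1}),
$$
so that the only discrepancy between the second equation of the CSS \eqref{eq:CSS-CH} and the second equation of the FIS \eqref{eq:FIS-CH} is the extra term $\frac{1}{\epsilon}(u_h^n - u_h^{n-1}, v_h)$. First I would substitute this identity into the second equation of \eqref{eq:CSS-CH} and regroup, obtaining
$$
\epsilon(\nabla u_h^n,\nabla v_h) + \frac{1}{\epsilon}(f(u_h^n),v_h)
+ \frac{1}{\epsilon}(u_h^n - u_h^{n-1}, v_h) - (w_h^n, v_h) = 0
\qquad\forall\, v_h\in V_h.
$$

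Next I would rewrite $u_h^n - u_h^{n-1} = k_n\cdot \frac{u_h^n - u_h^{n-1}}{k_n}$, so that the extra term becomes $\frac{k_n}{\epsilon}\bigl(\frac{u_h^n-u_h^{n-1}}{k_n}, v_h\bigr)$, which is precisely the backward-Euler discretization of the term $-\frac{k_n}{\epsilon}u_t$ (with the sign convention of the second line of \eqref{eq:perturbed-CH}) tested against $v_h$. Since the first equation of \eqref{eq:CSS-CH} coincides verbatim with the first equation of \eqref{eq:FIS-CH}, which is the FIS discretization of $u_t - \Delta w = 0$, the pair \eqref{eq:CSS-CH} is exactly the first-order fully implicit scheme \eqref{eq:FIS-CH} applied to the modified system \eqref{eq:perturbed-CH}. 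I would close by noting that the finite element space $V_h$ and all the mean-value bookkeeping (e.g. from taking $\eta_h=1$) are untouched by this rewriting, so the two discrete problems share literally the same solution $(u_h^n,w_h^n)$.

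There is essentially no analytic obstacle: the manipulation is purely algebraic, just as in Theorem~\ref{thm:css-fis}. The point deserving emphasis — and the reason the statement is interesting rather than trivial — is that the perturbation coefficient $k_n/\epsilon$ multiplying $u_t$ is the time-step size itself, so, in contrast to the Allen--Cahn situation (where the CSS was an FIS for the \emph{same} model with a rescaled step), the CSS for Cahn--Hilliard is an FIS for a genuinely \emph{different} model, one that is step-dependent and degenerates to \eqref{eq:CH} only as $k_n\to 0$. One should also state carefully what ``the fully implicit scheme for \eqref{eq:perturbed-CH}'' means, namely that the $u_t$ appearing in the second equation of \eqref{eq:perturbed-CH} is discretized by the very same backward difference $\frac{u_h^n-u_h^{n-1}}{k_n}$ used for $u_t$ in the first equation; under that convention the claimed equivalence is an identity rather than merely an approximation.
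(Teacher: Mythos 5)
Your argument is correct and is exactly the intended one: the paper states this theorem without an explicit proof as the direct analog of Theorem~\ref{thm:css-fis}, and your identity $(u_h^n)^3-u_h^{n-1}=f(u_h^n)+(u_h^n-u_h^{n-1})$, combined with writing $u_h^n-u_h^{n-1}=k_n\,\frac{u_h^n-u_h^{n-1}}{k_n}$ so that the extra term is the backward-Euler discretization of $-\frac{k_n}{\epsilon}u_t$ in \eqref{eq:perturbed-CH} while the first equations coincide verbatim, is precisely the omitted computation. Your closing observation—that here the perturbation genuinely changes the model rather than merely rescaling the time step as in the Allen--Cahn case—matches the point the paper makes in the discussion following the theorem.
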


We note that \eqref{eq:perturbed-CH} can be equivalently written as
follows:
\begin{equation} \label{ConvexCH}
(1 - \frac{k_n}{\epsilon}\Delta)u_t + \Delta (\epsilon \Delta u -
\frac{1}{\epsilon}f(u)) = 0.
\end{equation}
It is known that \cite{caginalp1998convergence} when
$k_n=\mathcal{O}(\epsilon^3)$, the solution of \eqref{ConvexCH}
converges to the Hele-Shaw flow, which is also the limiting dynamics
for the Cahn-Hilliard equation \eqref{eq:CH}.   In other situations,
for example, when $k_n = \mathcal{O}(\epsilon^2)$, their limiting
dynamics may be different.

\paragraph{Test 4} \label{test4} 
In this test, the computational domain is $(0,1)^2$, and the
following initial condition for the Cahn-Hilliard equation is chosen
as
\begin{align}\label{eq:initial-smooth2} u(x,t) =
\mathrm{tanh}\bigg(\frac{\sqrt{x^2+y^2}-0.17}{\sqrt{2}\epsilon}\bigg),
\qquad \epsilon = 0.02.
\end{align} 
Again, the Figure \ref{fig:CH-delay} is the snapshot showing the
lagging phenomenon at different time points.

\begin{figure}[!htbp]
\centering
\includegraphics[width=0.35\textwidth]{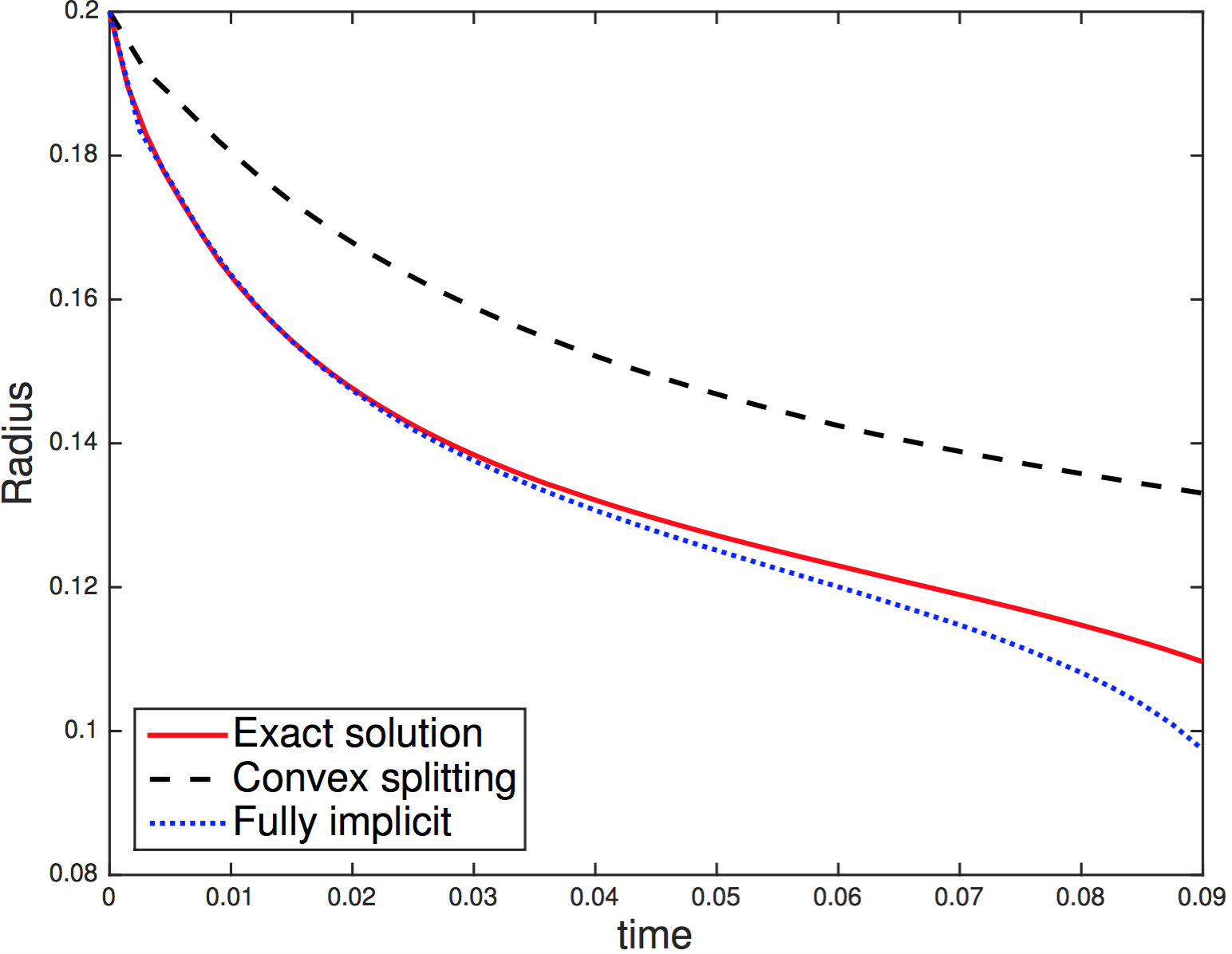}
\caption{Cahn-Hilliard equations: FIS and CSS. Here,
  $\epsilon=0.02, k_n=5\times10^{-4}$ and $h=0.015$.}\label{fig:CH-delay}
\end{figure}

\subsection{Some other first-order partially implicit schemes}
In this section, we briefly discuss several other first-order
partially implicit schemes for the Allen-Cahn model. 

{\it Semi-implicit scheme}: Seeking $u_h^n\in V_h$ for $n=1,2,\cdots$, such that
\begin{align}\label{semi-implicit}
(\frac{u_h^n - u_h^{n-1}}{k_n}, v_h) + (\nabla u_h^n, \nabla v_h) +
\frac{1}{\epsilon^2}(f(u_h^{n-1}),v_h) = 0 \qquad\forall v_h\in V_h.
\end{align}

{\it Stabilized semi-implicit scheme}: Seeking $u_h^{n}\in V_h$ for
$n=1,2,\cdots$, such that
\begin{align}\label{stabilized-semi-implicit}
(\frac{1}{k_n}+\frac{S}{\epsilon^2})(u_h^n-u_h^{n-1},v_h)+(\nabla
u_h^{n},\nabla v_h)+\frac{1}{\epsilon^2}(f(u_h^{n-1}),v_h)=0
\qquad\forall v_h\in V_h,
\end{align}
where $S>0$ (set as $S=1$ in the Test 5) is a stabilized constant.

\begin{theorem} \label{thm:time-scaling}
The scheme \eqref{semi-implicit} and
\eqref{stabilized-semi-implicit} can be recast as 
\begin{equation} \label{unified-AC}
\left( \frac{1+\gamma_n}{k_n}(u_h^n - u_h^{n-1}), v_h\right) + (\nabla
u_h^n, v_h) + \frac{1}{\epsilon^2} (f(u_h^n), v_h) = 0 \qquad
\forall v_h\in V_h.
\end{equation}
For semi-implicit scheme \eqref{semi-implicit}, 
$$ 
\gamma_n =  \frac{k_n}{\epsilon^2}[1 - (u_h^n)^2 - u_h^n u_h^{n-1} -
  (u_h^{n-1})^2],
$$
and for stabilized semi-implicit scheme
\eqref{stabilized-semi-implicit},
$$ 
\gamma_n = \frac{k_n}{\epsilon^2}[1+S - (u_h^n)^2 - u_h^n u_h^{n-1} -
  (u_h^{n-1})^2].
$$
\end{theorem}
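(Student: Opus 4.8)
The plan is to follow the same algebraic strategy as in the proof of Theorem~\ref{thm:css-fis}: rewrite the explicitly-treated nonlinear term $f(u_h^{n-1})$ in terms of $f(u_h^{n})$ plus a remainder proportional to $u_h^n-u_h^{n-1}$, and then merge that remainder into the discrete time-derivative term. The conceptual difference from Theorem~\ref{thm:css-fis}, where the absorbed factor was a genuine constant $k_n'$, is that here the remainder carries a coefficient depending on $u_h^n$ and $u_h^{n-1}$; this is exactly why the statement records it as the solution-dependent factor $1+\gamma_n$ sitting inside the $L^2$ inner product rather than as a rescaled scalar time step.

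Concretely, I would first establish the pointwise polynomial identity
\[
f(u_h^{n-1})=f(u_h^n)+(u_h^n-u_h^{n-1})\bigl[\,1-(u_h^n)^2-u_h^nu_h^{n-1}-(u_h^{n-1})^2\,\bigr],
\]
which follows immediately from $f(u)=u^3-u$ together with the factorization $(u_h^n)^3-(u_h^{n-1})^3=(u_h^n-u_h^{n-1})\bigl((u_h^n)^2+u_h^nu_h^{n-1}+(u_h^{n-1})^2\bigr)$; after pairing with an arbitrary $v_h\in V_h$ in the $L^2(\Omega)$ inner product it becomes an identity between the corresponding terms of \eqref{semi-implicit} (resp. \eqref{stabilized-semi-implicit}).

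For the semi-implicit scheme \eqref{semi-implicit} I would then substitute this identity for the term $\frac1{\epsilon^2}(f(u_h^{n-1}),v_h)$ and collect the two contributions proportional to $(u_h^n-u_h^{n-1},v_h)$: the $\frac1{k_n}$ coming from the discrete derivative and the $\frac1{\epsilon^2}[1-(u_h^n)^2-u_h^nu_h^{n-1}-(u_h^{n-1})^2]$ coming from the remainder. Factoring $\frac1{k_n}$ out of their sum produces the coefficient $\frac1{k_n}\bigl(1+\frac{k_n}{\epsilon^2}[1-(u_h^n)^2-u_h^nu_h^{n-1}-(u_h^{n-1})^2]\bigr)=\frac{1+\gamma_n}{k_n}$ with exactly the stated $\gamma_n$, while the untouched terms are precisely $(\nabla u_h^n,\nabla v_h)+\frac1{\epsilon^2}(f(u_h^n),v_h)$, so \eqref{semi-implicit} rearranges into \eqref{unified-AC}. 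For the stabilized scheme \eqref{stabilized-semi-implicit} the computation is identical, except that the coefficient of $(u_h^n-u_h^{n-1},v_h)$ already carries the additional summand $\frac{S}{\epsilon^2}$; carrying it through the same regrouping simply replaces $1$ by $1+S$ inside the bracket, giving the second formula for $\gamma_n$.

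There is no genuine analytic obstacle — the theorem is an exact algebraic rearrangement — so the only thing requiring care is the sign bookkeeping in the cubic difference (it is the $-u$ part of $f$ that produces the leading $1$, resp. $1+S$, in the bracket). The point the write-up should stress is that $\gamma_n$ is \emph{not} a constant: it depends on the still-unknown $u_h^n$, so, in contrast to Theorem~\ref{thm:css-fis}, this equivalence is not literally ``the fully implicit scheme run at a smaller time step,'' but rather the fully implicit scheme in which the effective local time step is modulated pointwise by $1+\gamma_n$ — the precise sense in which these partially implicit schemes inherit a time-scaling distortion.
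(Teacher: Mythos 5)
Your proposal is correct and follows exactly the paper's own argument: the paper's proof consists of precisely the identity $f(u_h^{n-1}) = f(u_h^n) + [1 - (u_h^n)^2 - u_h^n u_h^{n-1} - (u_h^{n-1})^2](u_h^n - u_h^{n-1})$, and your substitution and regrouping (with the extra $S/\epsilon^2$ term giving $1+S$ for the stabilized scheme) fills in the same routine steps the paper leaves implicit. Your closing remark that $\gamma_n$ depends on the unknown $u_h^n$, unlike the constant rescaled step in Theorem~\ref{thm:css-fis}, is a correct and worthwhile clarification consistent with the paper's intent.
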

\begin{proof}
 For semi-implicit and stabilized semi-implicit schemes, the parameter $\delta_n$ can
 be derived from $f(u_h^{n-1}) = f(u_h^n) + [1 - (u_h^n)^2 - u_h^n
 u_h^{n-1} - (u_h^{n-1})^2](u_h^n - u_h^{n-1})$.
\end{proof}

Depending on the size and sign of $\gamma_n$, the above theorem will
offer some insight to the behavior of the two semi-implicit schemes in
comparison with the fully implicit scheme \eqref{eq:FIS-AC}. 

\paragraph{Test 5} \label{test5} 
In this test, the same domain and initial conditions are chosen as in
Test 1. On the left graph of Figure \ref{fig:AC-delay}, the same
$\epsilon$, $h$ and $k$ are chosen to draw the graphs using different
numerical schemes comparing with the exact solution (which is obtained
by highly refined meshes and extremely small time step size). We
observe that only the FIS performs well. The right graph shows the
delayed convergence" of the CSS.

\begin{figure}[!htbp]
\centering 
\captionsetup{justification=centering}
\subfloat[Different schemes: $\epsilon=0.02,\ k_n=0.0005$ and $h=0.015$]{
  \includegraphics[width=0.35\textwidth]{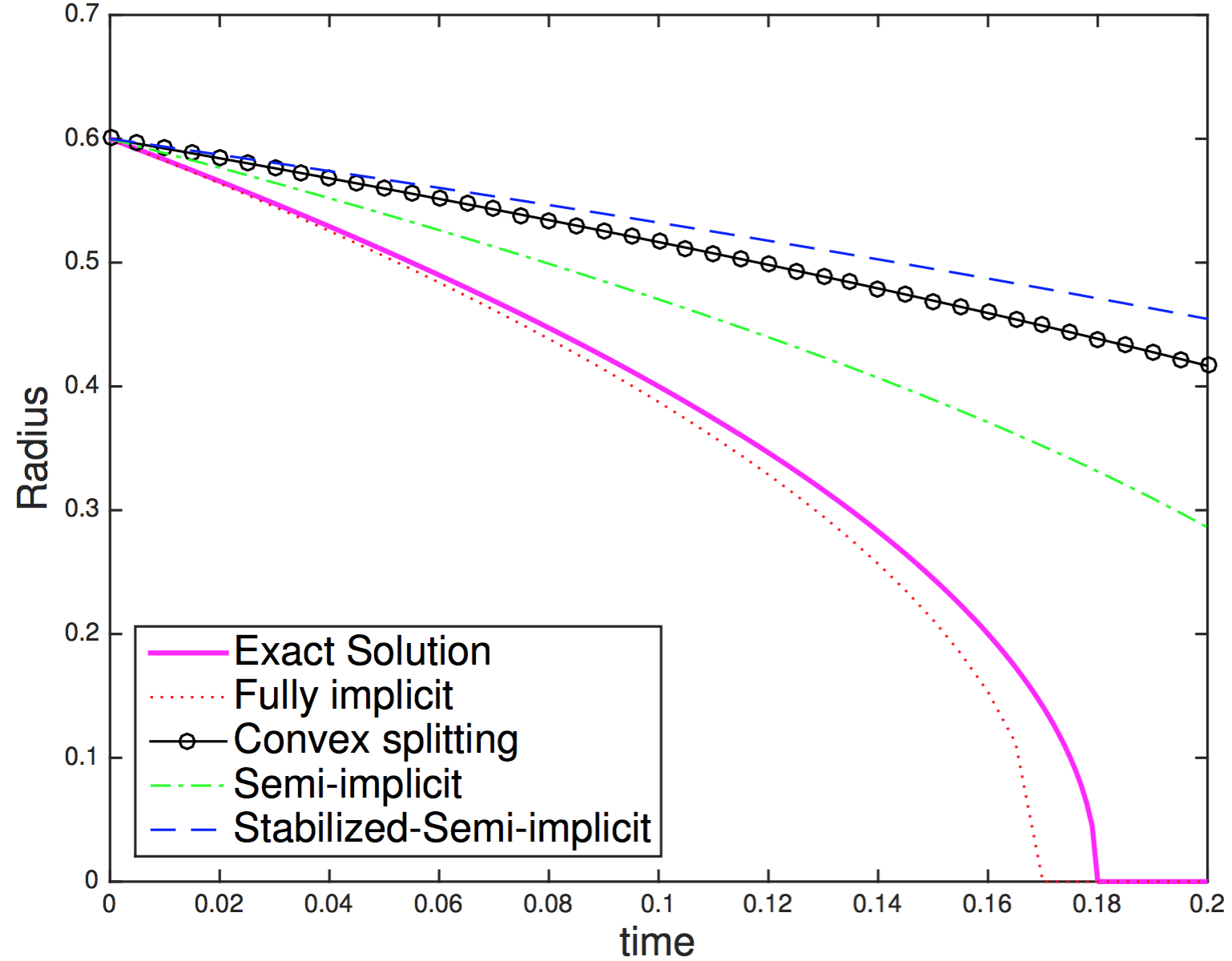}
}%
\subfloat[CSS: $\epsilon=0.02,\ h=0.015$ and $k_n$'s]{
  \includegraphics[width=0.35\textwidth]{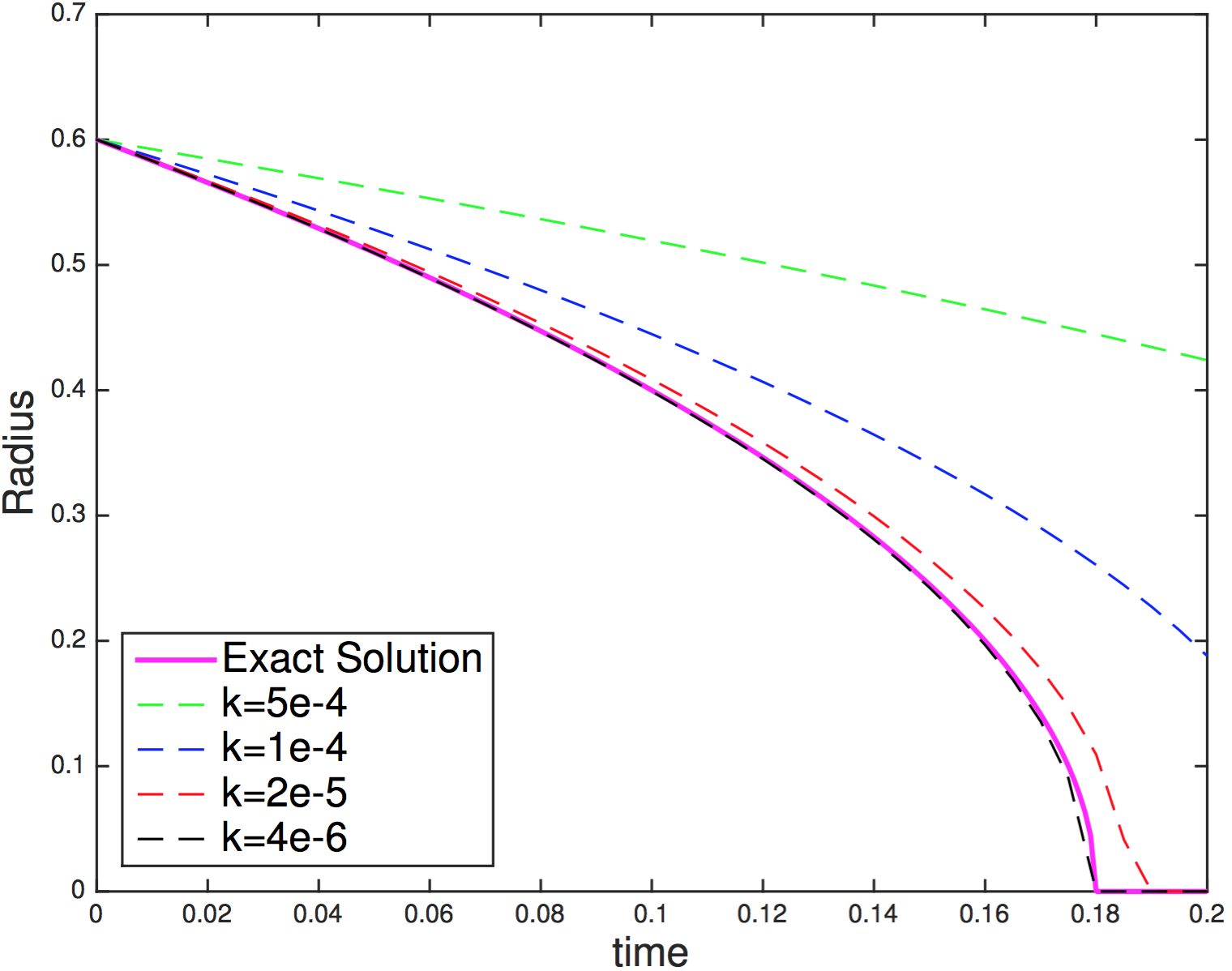}
}
\caption{Allen-Cahn equation: Radius change with time using different
numerical methods.} \label{fig:AC-delay}
\end{figure}

\subsection{Convex splitting schemes interpreted as artificial convexity
schemes} \label{subsec:convexity}
In this section, we give a slightly different perspective on convex
splitting schemes. We consider the following modified Allen-Cahn
model:
\begin{equation} \label{ConvexAC-delta}
\bigg(1+\frac{\delta_n}{\epsilon^2}\bigg)u_t-\Delta u+\frac{1}{\epsilon^2}f(u)=0,  
\end{equation} 
and the following modified Cahn-Hilliard model:
\begin{equation}\label{ConvexCH-delta}
\begin{aligned}
\bigg(1-\frac{\delta_n}{\epsilon}\Delta\bigg)u_t-\Delta w &=0  \qquad
\mbox{in } \Omega_T,\\
-\epsilon\Delta u +\frac{1}{\epsilon}f(u) &=w \qquad \mbox{in }
\Omega_T.
\end{aligned}
\end{equation}


\begin{theorem}\label{thm:convexification}
When $k _n\leq \epsilon^2+\delta_n$, the standard fully implicit scheme
for \eqref{ConvexAC-delta} is equivalent to the convex minimization
problem:
\begin{equation} \label{eq:MAC-FIS}
u_h^n = \underset{u_h\in V_h}{\mathrm{argmin}}
\left\{
J_\epsilon^{\rm AC}(u_h)+(\frac{1}{2k_n}+\frac{\delta_n}{2k_n\epsilon^2})\int_{\Omega}(u_h
- u_h^{n-1})^2dx \right\}.
\end{equation}
When $k_n\le(\epsilon^{3\slash2}+\sqrt{\epsilon^3+\delta_n})^2$, the
standard fully implicit scheme for \eqref{ConvexCH-delta} is
equivalent to the convex minimization problem: 
\begin{equation} \label{eq:MCH-FIS}
u_h^n = u_h^{n-1} + \theta_h^n, \quad \theta_h^n = \underset{\theta_h\in
\mathring{V}_h}{\mathrm{argmin}}
\left\{
J_\epsilon^{\rm CH}(u_h^{n-1}+\theta_h) +
\frac{1}{2k_n}\|\nabla\Delta_h^{-1}\theta_h\|_{L^2(\Omega)}^2 +
\frac{\delta_n}{2k_n\epsilon}\|\theta_h\|_{L^2(\Omega)}^2
\right\}.
\end{equation}
\end{theorem}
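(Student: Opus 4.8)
The plan is to handle the two assertions exactly in the spirit of Theorem~\ref{thm:convexity-FIS-AC} and Theorem~\ref{thm:convexity-FIS-CH}: in each case, first identify the standard fully implicit scheme for the \emph{convexified} model with the Euler--Lagrange equation of the functional on the right-hand side, and then show that functional is convex precisely under the stated time-step threshold; unique solvability and the claimed equivalence then follow from the coercivity argument already used in those two theorems.

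For the Allen--Cahn part I would first write out the standard first-order FIS for \eqref{ConvexAC-delta}: testing against $v_h\in V_h$ and applying backward Euler gives
\[
\Bigl(1+\frac{\delta_n}{\epsilon^2}\Bigr)\frac{1}{k_n}(u_h^n-u_h^{n-1},v_h)+(\nabla u_h^n,\nabla v_h)+\frac{1}{\epsilon^2}(f(u_h^n),v_h)=0 \qquad\forall\,v_h\in V_h,
\]
which is nothing but $E'(u_h^n)(v_h)=0$ for $E(u_h)=J_\epsilon^{\rm AC}(u_h)+(1+\frac{\delta_n}{\epsilon^2})\frac{1}{2k_n}\|u_h-u_h^{n-1}\|_{L^2(\Omega)}^2$, and since $(1+\frac{\delta_n}{\epsilon^2})\frac{1}{2k_n}=\frac{1}{2k_n}+\frac{\delta_n}{2k_n\epsilon^2}$ this $E$ is exactly the functional in \eqref{eq:MAC-FIS}. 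Mirroring \eqref{eq:FIS-AC-frechet} with $F''(u)=3u^2-1$, one computes
\[
E''(u_h)(v_h,v_h)=\|\nabla v_h\|_{L^2(\Omega)}^2+\frac{3}{\epsilon^2}\int_\Omega u_h^2v_h^2\,dx+\Bigl(\frac{1}{k_n}+\frac{\delta_n}{k_n\epsilon^2}-\frac{1}{\epsilon^2}\Bigr)\int_\Omega v_h^2\,dx,
\]
and the bracketed coefficient is nonnegative iff $\frac{1}{k_n}(1+\frac{\delta_n}{\epsilon^2})\ge\frac{1}{\epsilon^2}$, i.e.\ iff $k_n\le\epsilon^2+\delta_n$; hence $E$ is convex on $V_h$ (strictly away from the degenerate case $u_h\equiv0$, $k_n=\epsilon^2+\delta_n$), which gives the first half of the statement.

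For the Cahn--Hilliard part I would reproduce the elimination of $w_h^n$ from the proof of Theorem~\ref{thm:convexity-FIS-CH}. Writing the standard FIS for \eqref{ConvexCH-delta}, the first equation contributes $\frac{1}{k_n}(\theta_h,\eta_h)+\frac{\delta_n}{k_n\epsilon}(\nabla\theta_h,\nabla\eta_h)+(\nabla w_h^n,\nabla\eta_h)=0$ with $\theta_h:=u_h^n-u_h^{n-1}$; testing with $\eta_h\equiv1$ forces $\theta_h\in\mathring V_h$, and testing the second equation with $v_h\equiv1$ gives $\int_\Omega w_h^n=\frac{1}{\epsilon}\int_\Omega f(u_h^n)$. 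Using \eqref{eq:discrete-Laplace} and the isomorphism $\Delta_h|_{\mathring V_h}$, the first equation solves to $w_h^n=\frac{1}{k_n}\Delta_h^{-1}\theta_h-\frac{\delta_n}{k_n\epsilon}\theta_h+\frac{1}{\epsilon|\Omega|}\int_\Omega f(u_h^n)\,dx$; substituting this into the second equation and then restricting the test functions to $\mathring V_h$ (so that the $Q_0 f(u_h^n)$ contribution drops out) yields exactly the Euler--Lagrange equation on $\mathring V_h$ of $J_\epsilon^{\rm CH}(u_h^{n-1}+\theta_h)+\frac{1}{2k_n}\|\nabla\Delta_h^{-1}\theta_h\|_{L^2(\Omega)}^2+\frac{\delta_n}{2k_n\epsilon}\|\theta_h\|_{L^2(\Omega)}^2$, which is the functional in \eqref{eq:MCH-FIS}.

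It then remains to prove convexity of that functional under $k_n\le(\epsilon^{3/2}+\sqrt{\epsilon^3+\delta_n})^2$, and this is the part where the real work lies. Its second derivative is $\frac{1}{\epsilon}\int_\Omega(3(u_h^{n-1}+\theta_h)^2-1)\eta_h^2\,dx+\frac{1}{k_n}\|\nabla\Delta_h^{-1}\eta_h\|_{L^2(\Omega)}^2+\frac{\delta_n}{k_n\epsilon}\|\eta_h\|_{L^2(\Omega)}^2+\epsilon\|\nabla\eta_h\|_{L^2(\Omega)}^2$, so the only possibly negative term is $-\frac{1}{\epsilon}\|\eta_h\|_{L^2(\Omega)}^2$. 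I would use the discrete interpolation inequality $\|\eta_h\|_{L^2(\Omega)}^2\le\|\nabla\eta_h\|_{L^2(\Omega)}\|\nabla\Delta_h^{-1}\eta_h\|_{L^2(\Omega)}$ (Cauchy--Schwarz for the operator $-\Delta_h$, as in Theorem~\ref{thm:convexity-FIS-CH}) together with Young's inequality with a free parameter $\mu>0$; when $k_n>\delta_n$ one must absorb $c\|\eta_h\|_{L^2(\Omega)}^2$ with $c=\frac{k_n-\delta_n}{k_n\epsilon}$ into $\epsilon\|\nabla\eta_h\|_{L^2(\Omega)}^2+\frac{c^2}{4\epsilon}\|\nabla\Delta_h^{-1}\eta_h\|_{L^2(\Omega)}^2$ (the choice $\mu=\epsilon/c$), and the requirement $\frac{c^2}{4\epsilon}\le\frac{1}{k_n}$ becomes, after inserting $c$ and setting $t=\sqrt{k_n}$, the quadratic inequality $t^2-2\epsilon^{3/2}t-\delta_n\le0$, whose relevant (positive) root is $\epsilon^{3/2}+\sqrt{\epsilon^3+\delta_n}$; this reproduces the stated bound, and when $k_n\le\delta_n$ the bound holds trivially. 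The main obstacles I anticipate are the careful bookkeeping of the mean-zero projections $Q_0$ in the $w_h^n$-elimination and the algebraic reduction that turns the Young-inequality constraint into the clean threshold $(\epsilon^{3/2}+\sqrt{\epsilon^3+\delta_n})^2$; once convexity is established, the unique solvability and the equivalence with \eqref{eq:MCH-FIS} follow exactly as in Theorem~\ref{thm:convexity-FIS-CH}.
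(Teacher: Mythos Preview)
Your proposal is correct and follows exactly the approach the paper takes: the paper's own proof is simply the one-line remark that the arguments are ``similar to Theorem~\ref{thm:convexity-FIS-AC} and~\ref{thm:convexity-FIS-CH}, respectively,'' and you have faithfully carried out that program. In particular, your derivation of the Cahn--Hilliard threshold $k_n\le(\epsilon^{3/2}+\sqrt{\epsilon^3+\delta_n})^2$ via the interpolation inequality $\|\eta_h\|_{L^2}^2\le\|\nabla\eta_h\|_{L^2}\|\nabla\Delta_h^{-1}\eta_h\|_{L^2}$ and the optimized Young splitting is precisely the intended refinement of the argument in Theorem~\ref{thm:convexity-FIS-CH}, and the elimination of $w_h^n$ and the handling of $Q_0$ mirror that proof verbatim.
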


\begin{proof}
The proofs of \eqref{eq:MAC-FIS} and \eqref{eq:MCH-FIS} are similar to
Theorem \ref{thm:convexity-FIS-AC} and \ref{thm:convexity-FIS-CH},
respectively. 
\end{proof}

In view of Theorem \ref{thm:convexification}, the modified model
\eqref{ConvexAC-delta} may be viewed as a convexified model of the original
Allen-Cahn model \eqref{eq:AC}; the added term
$\frac{\delta_n}{\epsilon^2}u_t$ introduces a new time scale of the
model and on the discrete level it plays the role of an artificial
convexification.  Similarly, the modified model \eqref{ConvexCH-delta}
may be viewed as a convexified model of the original Cahn-Hilliard
model \eqref{eq:CH}.  We note that the CSS for the original Allen-Cahn
or Cahn-Hilliard model is the FIS for the convexified model with
$\delta_n =k_n$. 

With such an interpretation, the convex splitting scheme may be more
appropriately viewed as an artificial convexity scheme.  This is in
some way similar to the artificial viscosity scheme for hyperbolic
equations or convection dominated convection-diffusion problems. The
physical implication of the convexified model
\eqref{ConvexAC-delta} is a new time-scale: $t'=
(1+\frac{\delta_n}{\epsilon^2})t$, which leads to a time-delay in
comparison to the original model. The implication of the modified
model \eqref{ConvexCH-delta} seems to be similar but less obvious.

\section{A modified FIS satisfying a discrete maximum principle}
\label{sec:modified-FIS}
In this section, we will modify
the fully implicit scheme (or the corresponding convex splitting
scheme) to preserve the maximum principle on discrete level. We will
then further show that this modified scheme can be uniformly
preconditioned by a Poisson-like operator. We refer to
\cite{nochetto1997convergence, shen2014maximum} for other maximum
principle preserving schemes for the Allen-Cahn equation.

\subsection{A modified scheme}
Our modified  FIS is motivated by the maximum principle of Allen-Cahn
on continuous level stated in the following theorem (see
\cite{gilbarg2015elliptic, feng2007analysis} for the idea, and  
Proposition 2.2.1 in \cite{li2015numerical} for the details).
\begin{theorem} \label{thm:maximum-AC-continuous}
If $u$ is a weak solution of the Allen-Cahn equation \eqref{eq:AC} and
$\|u_0\|_{L^{\infty}(\Omega)} \leq 1$, then
$\|u(x,t)\|_{L^{\infty}(\Omega)} \leq 1$.
\end{theorem}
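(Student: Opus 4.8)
The plan is to run a Stampacchia-type truncation argument that exploits the sign structure of the cubic nonlinearity $f(u)=u^3-u=u(u-1)(u+1)$ together with the homogeneous Neumann boundary condition. First I would test the weak form of \eqref{eq:AC} with the truncation $\phi:=(u-1)^+=\max\{u-1,0\}$, which lies in the same space as $u$ and vanishes at $t=0$ since $\|u_0\|_{L^{\infty}(\Omega)}\le 1$. The time term yields $(u_t,\phi)=\tfrac12\frac{d}{dt}\|\phi\|_{L^2(\Omega)}^2$; the diffusion term yields $(\nabla u,\nabla\phi)=\|\nabla\phi\|_{L^2(\Omega)}^2\ge 0$, with no boundary contribution because $\partial u/\partial n=0$; and the reaction term is nonnegative since on the set $\{u>1\}$ each of the factors $u$, $u-1$, $u+1$ is positive. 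Altogether,
$$
\frac12\frac{d}{dt}\|\phi(\cdot,t)\|_{L^2(\Omega)}^2 + \|\nabla\phi(\cdot,t)\|_{L^2(\Omega)}^2 + \frac{1}{\epsilon^2}\int_{\{u(\cdot,t)>1\}} u(u-1)(u+1)\,dx = 0,
$$
so $\frac{d}{dt}\|\phi(\cdot,t)\|_{L^2(\Omega)}^2\le 0$; combined with $\|\phi(\cdot,0)\|_{L^2(\Omega)}=\|(u_0-1)^+\|_{L^2(\Omega)}=0$ this forces $\phi\equiv 0$, i.e. $u(x,t)\le 1$ a.e.

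For the lower bound I would invoke the oddness of $f$: if $u$ solves \eqref{eq:AC} then $v:=-u$ solves the same equation with initial datum $-u_0$, and $\|-u_0\|_{L^{\infty}(\Omega)}\le 1$, so the bound just proved applied to $v$ gives $-u\le 1$, hence $u\ge -1$. (Alternatively one repeats the truncation argument directly with $\psi:=\min\{u+1,0\}$, for which $\nabla u\cdot\nabla\psi=|\nabla\psi|^2$ and $f(u)\psi\ge 0$ on $\{u<-1\}$, and $\psi(\cdot,0)=0$.) Combining the two bounds gives $\|u(\cdot,t)\|_{L^{\infty}(\Omega)}\le 1$ for all $t$.

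The real work is not in the sign bookkeeping above but in justifying these manipulations at the regularity of a weak solution: that $(u-1)^+$ is an admissible test function, that $t\mapsto\|(u-1)^+(\cdot,t)\|_{L^2(\Omega)}^2$ is absolutely continuous with the asserted derivative, and that the reaction integral is finite. This is the standard parabolic truncation technicality, handled either by using the regularity $u\in L^2(0,T;H^1(\Omega))\cap H^1(0,T;H^{-1}(\Omega))$ together with a Lions--Magenes chain rule, or by smoothing --- replacing $(\cdot)^+$ by a convex $C^1$ approximation $\beta_\delta$ with $\beta_\delta'\ge 0$ and $\beta_\delta\equiv 0$ on $(-\infty,0]$, deriving the inequality for $\beta_\delta(u-1)$, and letting $\delta\to 0$. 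I would cite \cite{gilbarg2015elliptic, feng2007analysis} and Proposition 2.2.1 of \cite{li2015numerical} for these details rather than reproduce them here.
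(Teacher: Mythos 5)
Your truncation argument is correct and is essentially the approach the paper relies on: the paper gives no proof of its own here but defers to the cited references, and its discrete analogue (Theorem~\ref{thm:maximum-AC-lumping}) runs the very same sign bookkeeping with the test function $I_h\big((u_h^n-1)^+\big)$. Only note the small typo in your displayed identity: the reaction term should read $\frac{1}{\epsilon^2}\int_{\{u>1\}} u(u+1)(u-1)^2\,dx$ (the test-function factor $(u-1)^+$ is missing), which is still nonnegative, so the conclusion is unaffected.
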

Unfortunately, the above maximum principle can not be proved for a
standard FIS.  In this section, we will modify the standard FIS scheme
so that a maximum principle preserving scheme analogous to
Theorem~\ref{thm:maximum-AC-continuous} can also be rigorously proved.

We consider the $P_1$-Lagrangian finite element space in this section,
$$ 
V_h = \bigl\{v_h \in C(\bar{\Omega}): v_h|_{K} \in P_1(K)\bigr\}. 
$$ 
The nodal basis function of $V_h$ related to the vertex $a_i$ is
denoted as ${\varphi_i}$. We then define the nodal value interpolation
$I_h: C(\bar{\Omega}) \mapsto V_h$ as 
\begin{equation} \label{interpolation}
I_h v := \sum_{a_i \in \mathcal{N}_h} v(a_i)\varphi_i = \sum_{a_i \in
\mathcal{N}_h} v_i \varphi_i.
\end{equation} 

Following \cite{xu1999monotone}, for given $K \in \mathcal{T}_h$ , we
introduce the following notation: $a_i (1\leq i \leq n+1)$ denote the
vertices of $K$, $E=E_{ij}$ the edge connecting two vertices $a_i$ and
$a_j$, $F_i$ the $(n-1)$-dimensional simplex opposite to the vertex
$a_i$, $\theta_{ij}^K$ or $\theta_E^K$ the angle between the faces
$F_i$ and $F_j$, $\kappa_E^K=F_i \cap F_j$ , the $(n-2)$-dimensional
simplex opposite to the edge $E=E_{ij}$.

We first consider the simplest and important case of the Poisson
equation with Neumann boundary condition. Then, for any $u_h, v_h \in
V_h$, we have (see \cite{xu1999monotone} for details) 
\begin{equation} \label{graph-Laplacian}
(\nabla u_h, \nabla v_h) = \sum_{K \in \mathcal{T}_h} \sum_{E\subset
  K} \omega_E^K \delta_Eu_h \delta_E v_h,
\end{equation}
where $\delta_E \phi = \phi(a_i) - \phi(a_j)$ for any continuous
function $\phi$ on $E = E_{ij}$ and $\omega_E^K =
\frac{1}{n(n-1)}|\kappa_E^K|\cot\theta_E^K$. We will make the
following assumption
\begin{equation} \label{Delaunay}
w_E:= \frac{1}{n(n-1)}\sum_{K\supset E} |\kappa_E^K|\cot \theta_E^K
\geq 0 \qquad
\mbox{ for any edge $E$.}
\end{equation}
We note that, in 2D, the above assumption \eqref{Delaunay} is
equivalent to the Delaunay condition \cite{strang1973analysis} which
requires the sum of any pair of angles facing a common interior edge
to be less than or equal to $\pi$.  For higher dimension a sufficient
condition on ${\mathcal T}_h$ for \eqref{Delaunay} that all the
angles between any two adjacent $(n-1)$-simplicies from ${\mathcal
  T}_h$ are less than or equal to $\frac{\pi}{2}$.
 
With the help of nodal value interpolation, we define a norm
$\|\cdot\|_h$ on $V_h$ as 
\begin{equation} \label{lumping-bilinear} 
\|v_h\|_h^2 := \int_\Omega I_h(v_h^2) ~dx.
\end{equation}
Our {\it modified FIS} is as follows:   Find $u_h^n \in V_h$ for
$n=1,2,\cdots$, such that  
\begin{equation} \label{fully-implicit-lumping}
(\frac{1}{k_n} I_h\big((u_h^{n}-u_h^{n-1})v_h\big), 1) + (\nabla
u_h^{n}, \nabla v_h) +
\frac{1}{\epsilon^2}(I_h\big(f(u_h^{n})v_h\big), 1) = 0
\qquad \forall v_h \in
V_h. 
\end{equation}

\begin{theorem} \label{thm:maximum-AC-lumping}
Assume the triangulation satisfies \eqref{Delaunay}. If
$u_h^n$ is a solution of the modified FIS
\eqref{fully-implicit-lumping} and
$\|u_h^0\|_{L^{\infty}(\Omega)} \leq 1$, then
$\|u_h^n\|_{L^{\infty}(\Omega)} \leq 1$, for all $n\geq 0$. 
\end{theorem}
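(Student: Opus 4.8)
The plan is to prove the discrete maximum principle by induction on $n$, reducing at each step to the following algebraic claim: if $u_h^n \in V_h$ solves \eqref{fully-implicit-lumping} with $\|u_h^{n-1}\|_{L^\infty(\Omega)} \le 1$, then $\|u_h^n\|_{L^\infty(\Omega)} \le 1$. Since $V_h$ is the $P_1$ space, it suffices to bound the nodal values $u_i := u_h^n(a_i)$, because $\|v_h\|_{L^\infty(\Omega)} = \max_i |v_h(a_i)|$ for a $P_1$ function. First I would rewrite \eqref{fully-implicit-lumping} in nodal coordinates by choosing $v_h = \varphi_j$ to test against: the mass-lumping terms $\frac{1}{k_n}(I_h((u_h^n - u_h^{n-1})\varphi_j),1)$ and $\frac{1}{\epsilon^2}(I_h(f(u_h^n)\varphi_j),1)$ become $m_j(\frac{u_j - u_j^{n-1}}{k_n} + \frac{1}{\epsilon^2} f(u_j))$ where $m_j = (I_h \varphi_j, 1) = \int_\Omega \varphi_j\,dx > 0$ is the lumped mass; and the stiffness term, using \eqref{graph-Laplacian}, becomes $(\nabla u_h^n, \nabla \varphi_j) = \sum_{E \ni a_j} w_E\, \delta_E u_h^n \cdot (\pm 1)$, i.e.\ a graph Laplacian $\sum_{i} w_{ij}(u_j - u_i)$ with the off-diagonal weights $w_{ij} = w_E \ge 0$ by the Delaunay-type assumption \eqref{Delaunay}.

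Next, let $j$ be an index where $|u_j| = \max_i |u_i| =: M$, and suppose for contradiction that $M > 1$; say $u_j = M > 1$ (the case $u_j = -M < -1$ is symmetric, using oddness of $f$). At this node the graph-Laplacian term satisfies $\sum_i w_{ij}(u_j - u_i) \ge 0$ since $w_{ij} \ge 0$ and $u_j \ge u_i$ for all $i$. Also $u_j - u_j^{n-1} \ge M - 1 > 0$ because $\|u_h^{n-1}\|_{L^\infty} \le 1$. Finally $f(u_j) = u_j^3 - u_j = u_j(u_j^2 - 1) > 0$ since $u_j > 1$. Hence every one of the three terms in the nodal equation at $a_j$ is strictly positive (the first and third strictly, the Laplacian nonnegatively), so their sum cannot be zero — contradicting \eqref{fully-implicit-lumping} tested with $v_h = \varphi_j$. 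Therefore $M \le 1$, which closes the induction since the base case $n=0$ is the hypothesis $\|u_h^0\|_{L^\infty} \le 1$.

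The main obstacle is making the two reductions fully rigorous: first, that mass lumping really diagonalizes the zeroth-order terms, i.e.\ $(I_h(g\,\varphi_j),1) = g(a_j)\,m_j$ for $g \in V_h$ — this follows because $I_h(g\varphi_j) = \sum_i g(a_i)\varphi_i(a_j)\varphi_i = g(a_j)\varphi_j$ and then $(\varphi_j,1)$ is computed by exact integration, though one should note the interpolation is applied to the product $(u_h^n - u_h^{n-1})v_h$ which for $v_h = \varphi_j$ still lands us in the right place; second, that the stiffness term is genuinely a graph Laplacian with nonnegative weights, which is exactly the content of \eqref{graph-Laplacian} together with \eqref{Delaunay} and requires only bookkeeping of which edges touch $a_j$. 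A minor subtlety worth a sentence is existence of $u_h^n$: as in Theorems \ref{thm:convexity-FIS-AC}–\ref{thm:convexity-FIS-CH} one argues coercivity of the associated (lumped) discrete energy to get a solution, and the argument above applies to any solution regardless of whether the lumped energy is convex.
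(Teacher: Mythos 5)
Your proof is correct, but it proceeds by a genuinely different route than the paper. You test the scheme with the single hat function $\varphi_j$ at a node where $|u_h^n|$ attains its maximum, use that mass lumping diagonalizes the zeroth-order terms ($(I_h(g\varphi_j),1)=g(a_j)m_j$), and invoke \eqref{graph-Laplacian}--\eqref{Delaunay} to see that the stiffness contribution at that node is a nonnegatively weighted graph-Laplacian sum $\sum_i w_{ij}(u_j-u_i)\ge 0$; the sign of $f$ outside $[-1,1]$ and the induction hypothesis then give a strict positivity contradiction. This is the classical pointwise/M-matrix discrete maximum principle argument. The paper instead stays at the level of the variational formulation: it tests with the truncation $v_h=I_h\bigl((u_h^n-1)^+\bigr)$, proves the key inequality $(\nabla v_h,\nabla I_h(v_h^+))\ge\|\nabla I_h(v_h^+)\|_{L^2(\Omega)}^2$ from the same edge-weight identity and \eqref{Delaunay}, bounds the time-difference and nonlinear terms from below using the induction hypothesis and the sign of $(u_h^n+1)u_h^n(u_h^n-1)(u_h^n-1)^+$, and concludes $\|I_h((u_h^n-1)^+)\|_h=0$; the lower bound $u_h^n\ge -1$ follows symmetrically with $I_h((u_h^n+1)^-)$. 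Your nodal argument is more elementary and makes the role of the nonnegative edge weights completely transparent; the paper's truncation argument avoids isolating individual nodes, and its intermediate inequality \eqref{poisson-h} is a reusable variational fact in its own right. Both proofs use exactly the same hypotheses (mass lumping plus the Delaunay-type condition), and your side remarks — that $P_1$ functions attain their sup at the vertices, that $m_j>0$, and that existence of $u_h^n$ is not needed since the theorem assumes a solution — close the remaining small gaps, so your argument is complete as written.
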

\begin{proof}
For any function $v \in C(\bar{\Omega})$,  we introduce the following notation:
$$ 
v^+ = \begin{cases}
v & \text{if}~v \geq 0, \\
0 & \text{otherwise},
\end{cases}\qquad \text{and} \qquad  
v^- = \begin{cases}
-v & \text{if}~v \leq 0, \\
0 & \text{otherwise}.
\end{cases}
$$ 
A quick calculation shows that for any $v_i, v_j$,
$$ 
(v_i - v_j)(v_i^+ - v_j^+) - (v_i^+ - v_j^+)^2 = -(v_i^+ - v_j^+)(v_i^- - v_j^-) \geq 0.
$$ 
Therefore, the \eqref{graph-Laplacian} and \eqref{Delaunay} imply 
$$ 
\begin{aligned}
(\nabla v_h, \nabla I_h(v_h^+)) &= \sum_{K \in \mathcal{T}_h} \sum_{E
  \subset K} w_E^K \delta_E v_h \delta_E(I_h(v_h^+)) \\ 
  & \geq \sum_{K \in \mathcal{T}_h} \sum_{E \subset K} w_E^K \delta_E
  (I_h(v_h^+)) \delta_E(I_h(v_h^+)) = \|\nabla
  I_h(v_h^+)\|_{L^2(\Omega)}^2.
\end{aligned}
$$ 
This proves that  
\begin{equation} \label{poisson-h}
(\nabla v_h, \nabla I_h(v_h^+)) \geq \|\nabla I_h(v_h^+)\|_{L^2(\Omega)}^2.
\end{equation}

We now finish the proof by induction. First, the result holds for $n=0$ by
assumption. Assume the result holds for $n-1$, i.e.
$\|u_h^{n-1}\|_{L^\infty(\Omega)} \leq 1$. Then, we define a special test
function $v_h \in V_h$ as $v_h := I_h \left( (u_h^{n}-1)^+ \right)$.
Notice that $\|u_h^{n-1}\|_{L^\infty(\Omega)} \leq 1$ implies 
$$ 
\frac{1}{k_n}(u_h^{n} - u_h^{n-1}) \geq
\frac{1}{k_n}(u_h^{n} - 1),
$$ 
which means that 
$$ 
\begin{aligned}
(\frac{1}{k_n}I_h\big((u_h^{n}-u_h^{n-1})v_h\big), 1) &=
\frac{1}{k_n}\int_{\Omega} I_h\big( (u_h^n - u_h^{n-1}) (u_h^n-1)^+
  \big) ~dx \\
&\geq \frac{1}{k_n}\int_\Omega I_h\big( (u_h^n-1)(u_h^n-1)^+ \big)
~dx =\frac{1}{k_n} \|I_h\big((u_h^n - 1)^+\big)\|_h^2. 
\end{aligned}
$$ 
Furthermore by \eqref{poisson-h} and the inductive assumption,
$$ 
\begin{aligned}
(\nabla u_h^n, \nabla v_h) &= (\nabla(u_h^n-1), \nabla
    I_h\big((u_h^n-1)^+\big)) \geq \|\nabla
I_h\big((u_h^n-1)^+\big)\|_{L^2(\Omega)}^2 \geq 0, \\
(I_h\big(f(u_h^n)v_h\big), 1) &= \int_\Omega I_h\big((u_h^n+1)u_h^n
    (u_h^n-1)(u_h^n-1)^+\big)~dx \geq 0.
\end{aligned}
$$ 
Therefore, 
$$ 
\frac{1}{k_n} \|I_h\big((u_h^n - 1)^+\big)\|_h^2 \leq 
(\frac{1}{k_n} I_h\big((u_h^{n}-u_h^{n-1})v_h\big), 1) + (\nabla
u_h^{n}, \nabla v_h) +
\frac{1}{\epsilon^2}(I_h\big(f(u_h^{n})v_h\big), 1) = 0,
$$ 
which implies $I_h\big((u_h^{n} - 1)^+\big) = 0$, thus $u_h^{n} \leq 1$.
Similarly, by choosing a special test function $v_h := I_h \left(
(u_h^{n}+1)^- \right)$, we can prove that $u_h^{n} \geq -1$.
Therefore, $\|u_h^{n}\|_{L^{\infty}(\Omega)} \leq 1$.
\end{proof}


\paragraph{Test 6} \label{test6} 
In this test, the same domain is chosen as in Test 1, and the random
initial condition for the Allen-Cahn equation is used with $\epsilon =
0.01$. In Figure \ref{fig:maximum-preserving}, it shows the random
initial condition, the evolutions, and the $L^{\infty}$-norm of the
numerical solutions at different time points.
\begin{figure}[!htbp]
\centering 
\captionsetup{justification=centering}
\subfloat[$t=0$]{
\includegraphics[width=0.30\textwidth]{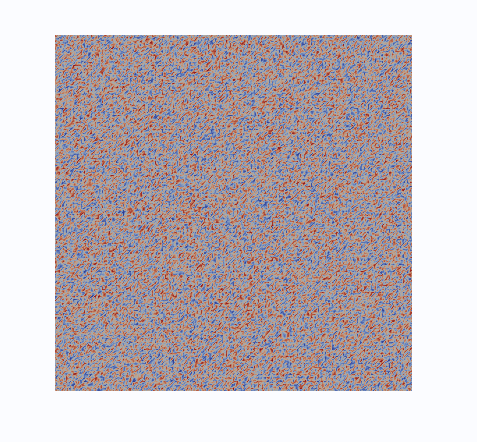}
}%
\subfloat[$t=0.003$]{
\includegraphics[width=0.30\textwidth]{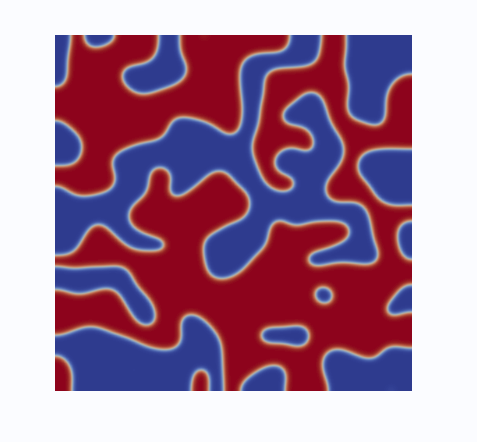}
}%
\subfloat[$L^\infty$-norm]{
\includegraphics[width=0.30\textwidth]{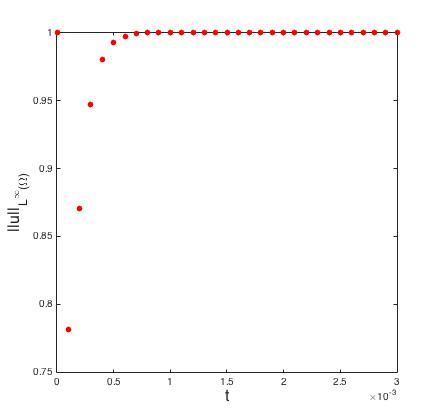}
}
\caption{Performance of modified FIS with random initial condition.} 
\label{fig:maximum-preserving}
\end{figure}

\begin{remark}
An analogous technique can be applied to prove the discrete maximum
principle for the convex splitting scheme with mass lumping 
\begin{equation} \label{convex-splitting-lumping}
(\frac{1}{k_n} I_h\big((u_h^{n}-u_h^{n-1})v_h\big), 1) + (\nabla
u_h^{n}, \nabla v_h) +
\frac{1}{\epsilon^2}(I_h\big([(u_h^n)^3-u_h^{n-1}]v_h\big), 1) = 0
\qquad \forall v_h \in
V_h. 
\end{equation}
This comes from the fact that \eqref{convex-splitting-lumping} can be
considered as the \eqref{fully-implicit-lumping} with the time step
size $\frac{\epsilon^2}{k_n+\epsilon^2}k_n$.
\end{remark}

\begin{remark}
We define the modified free-energy functional and discrete energy 
\begin{equation} \label{equ:FIS-AC-energy-lumping}
\begin{aligned}
J_{\epsilon,I}^{\rm AC}(u) &= \int_{\Omega} \frac{1}{2} |\nabla u|^2 +
\frac{1}{\epsilon^2} I_h(F(u)) ~dx, \\
E_{n,I}^{\rm AC}(u_h; u_h^{n-1}) &= J_{\epsilon,I}^{\rm AC}(u_h) + \frac{1}{2k_n}\int_\Omega
I_h(u_h-u_h^{n-1})^2~dx.
\end{aligned}
\end{equation}
We also define the   following energy minimization problem:  
\begin{equation} \label{equ:FIS-AC-energy-min-lumping}
u_h^n = \underset{u_h\in V_h}{\mathrm{argmin}} E_{n,I}^{\rm AC}(u_h;u_h^{n-1}).
\end{equation}
Similar to Theorem \ref{thm:convexity-FIS-AC}, we have the following
results:
\begin{enumerate}
\item Under the condition that $k_n \leq \epsilon^2$,  $E_{n,I}^{\rm AC}(\cdot;u_h^{n-1})$ is strictly convex on $V_h$.
\item The equation \eqref{convex-splitting-lumping} satisfies
$(E_{n,I}^{\rm AC})'(u_h^n; u_h^{n-1})(v_h) = 0$.
\item The following energy law holds
\begin{align}\label{energy-AC-lumping} 
J_{\epsilon,I}^{\rm AC}(u_h^n) + \frac{1}{2k_n}\|u_h^n -
u_h^{n-1}\|_{L^2(\Omega)}^2 \leq J_{\epsilon,I}^{\rm AC}(u_h^{n-1}).
\end{align}
\end{enumerate}
\end{remark}

\subsection{A robust preconditioner for the Allen-Cahn equation}
Next we will analyze a simple preconditioner for the Newton
linearization of modified FIS \eqref{fully-implicit-lumping}.  With this
preconditioner, the resulting preconditioned conjugate gradient method
(PCG) significantly reduces the number of iterations of the conjugate
gradient method (CG), and moreover, the number of iterations is
uniform with respect to the spatial meshes which can be locally
refined. We acknowledge that some nonlinear multigrid methods have
been applied to numerical schemes similar to
\eqref{fully-implicit-lumping} in the literature, see
\cite{kim2004conservative,tai2002global,wise2007solving}.

We first define the mass lumping operator $\mathcal{I}_h[u]: V_h
\mapsto V_h$ as 
\begin{equation} \label{mass-lumping-operator}
(\mathcal{I}_h[u] v_h, w_h) := (I_h(uv_hw_h), 1) \qquad
\forall v_h, w_h \in V_h, u \in C(\bar{\Omega}).
\end{equation}
Let $\mathcal{I}_h = \mathcal{I}_h[1]$ for convenience.  The
Fr$\acute{e}$chet derivative of scheme \eqref{fully-implicit-lumping}
is denoted by $\mathcal{L}_{n,h}[u_h^n]: V_h \mapsto V_h$, such that  
\begin{equation} \label{frechet-lumping}
(\mathcal{L}_{n,h}[u_h^n] v_h, w_h) := (
\frac{1}{k_n}\mathcal{I}_h v_h, w_h ) - (\Delta_h
v_h, w_h) + \frac{1}{\epsilon^2}\left( \mathcal{I}_h[(3u_h^{n})^2 -
1]v_h, w_h\right) \quad \forall v_h, w_h \in V_h. 
\end{equation}

\begin{theorem}\label{thm:well-conditioned}
The upper and lower bounds of $\mathcal{L}_{n,h}[u_h^n]$ are given by
\begin{align}\label{eq:upper-lower} 
\frac{1-\gamma_n}{k_n}\mathcal{I}_h - \Delta_h \leq \mathcal{L}_{n,h}[u_h^n]
\leq \frac{1+2\gamma_n}{k_n} \mathcal{I}_h - \Delta_h.
\end{align}
where $\gamma_n := k_n\slash\epsilon^2$.
\end{theorem}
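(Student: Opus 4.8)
The plan is to reduce the claimed bounds, read as a Loewner (quadratic-form) ordering of self-adjoint operators on $V_h$ with respect to $(\cdot,\cdot)$, to one pointwise estimate on the coefficient of the lumped-mass term plus a monotonicity property of $\mathcal{I}_h[\cdot]$. First I would rewrite \eqref{frechet-lumping} as the operator identity
\begin{equation*}
\mathcal{L}_{n,h}[u_h^n] = \frac{1}{k_n}\mathcal{I}_h - \Delta_h +
\frac{1}{\epsilon^2}\,\mathcal{I}_h\big[f'(u_h^n)\big], \qquad f'(u_h^n) = 3(u_h^n)^2 - 1,
\end{equation*}
and record that each summand is self-adjoint: $\mathcal{I}_h[g]$ because the form $(\mathcal{I}_h[g]v_h,w_h)=(I_h(g v_h w_h),1)$ in \eqref{mass-lumping-operator} is symmetric in $(v_h,w_h)$, and $\Delta_h$ by \eqref{eq:discrete-Laplace} (so $-\Delta_h$ is even symmetric positive semidefinite). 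Hence it suffices to compare the quadratic forms $v_h\mapsto(\mathcal{L}_{n,h}[u_h^n]v_h,v_h)$.

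The key lemma I would isolate is: \emph{if $g\in C(\bar\Omega)$ satisfies $a\le g\le b$ on $\bar\Omega$, then $a\,\mathcal{I}_h \le \mathcal{I}_h[g] \le b\,\mathcal{I}_h$.} This is exactly where mass lumping matters. For $v_h\in V_h$, using \eqref{interpolation} and \eqref{mass-lumping-operator},
\begin{equation*}
(\mathcal{I}_h[g]v_h,v_h) = \big(I_h(g v_h^2),1\big) = \sum_{a_i\in\mathcal{N}_h} g(a_i)\, v_h(a_i)^2 \int_\Omega \varphi_i\,dx,
\end{equation*}
where all weights $\int_\Omega\varphi_i\,dx\ge 0$ and all $v_h(a_i)^2\ge 0$; comparing this term by term with the same identity for $g\equiv 1$, i.e.\ $(\mathcal{I}_h v_h,v_h)=\sum_i v_h(a_i)^2\int_\Omega\varphi_i\,dx$, gives the claim. (The analogue is false for the consistent mass operator, which is precisely why \eqref{fully-implicit-lumping} is lumped.)

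Now I would invoke the discrete maximum principle. Since $u_h^n$ solves \eqref{fully-implicit-lumping}, the triangulation satisfies \eqref{Delaunay}, and $\|u_h^0\|_{L^\infty(\Omega)}\le 1$, Theorem~\ref{thm:maximum-AC-lumping} gives $\|u_h^n\|_{L^\infty(\Omega)}\le 1$, so $0\le(u_h^n)^2\le 1$ and therefore $-1\le f'(u_h^n)=3(u_h^n)^2-1\le 2$ on $\bar\Omega$. Applying the lemma with $g=f'(u_h^n)$, $a=-1$, $b=2$ yields $-\mathcal{I}_h \le \mathcal{I}_h[f'(u_h^n)] \le 2\,\mathcal{I}_h$. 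Multiplying by $\tfrac{1}{\epsilon^2}=\tfrac{\gamma_n}{k_n}$ (since $\gamma_n=k_n/\epsilon^2$), adding $\tfrac{1}{k_n}\mathcal{I}_h-\Delta_h$ to all three parts, and collecting the coefficient of $\mathcal{I}_h$ gives exactly \eqref{eq:upper-lower}.

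I do not expect a real obstacle: the substantive input, Theorem~\ref{thm:maximum-AC-lumping}, is already available, and the only nontrivial point left is that lumping makes $\mathcal{I}_h[\cdot]$ effectively diagonal and hence monotone under pointwise two-sided bounds. The one thing I would flag for the reader is that \eqref{eq:upper-lower} is useful as a preconditioning statement only when $\gamma_n<1$ (i.e.\ $k_n<\epsilon^2$), in which case the lower bound $\tfrac{1-\gamma_n}{k_n}\mathcal{I}_h-\Delta_h$ is positive on the orthogonal complement of the constants — but that concerns how the theorem is applied, not its proof.
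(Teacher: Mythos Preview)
Your proposal is correct and follows essentially the same route as the paper: both reduce \eqref{eq:upper-lower} to the two-sided bound $-1\le 3(u_h^n)^2-1\le 2$ (the lower bound trivially, the upper via Theorem~\ref{thm:maximum-AC-lumping}) and use it on the lumped nonlinear term. Your version is more explicit about why mass lumping makes $\mathcal{I}_h[\cdot]$ monotone under pointwise bounds and about the standing hypotheses (\eqref{Delaunay}, $\|u_h^0\|_{L^\infty}\le 1$) needed to invoke the discrete maximum principle, which the paper leaves implicit.
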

\begin{proof}
In light of \eqref{frechet-lumping}, we only need to prove 
$$ 
-\gamma_n (\mathcal{I}_h v_h, v_h) \leq
\frac{k_n}{\epsilon^2}(I_h\big([(3u_h^{n})^2 -
1](v_h)^2\big), 1) \leq 2\gamma_n (\mathcal{I}_h v_h, v_h) \qquad
\forall v_h \in V_h. 
$$ 
The left inequality can be proved by fact that $3(u_h^n)^2-1 \geq -1$,
and the right inequality can be proved by the fact that $3(u_h^n)^2 -
1 \leq 2$ due to the discrete maximum principle in Theorem
\ref{thm:maximum-AC-lumping}.
\end{proof}

Based on the Theorem \ref{thm:well-conditioned}, it is an immediate
consequence that when $\gamma_n \leq 1$, or $k_n \leq \epsilon^2$, $
(\mathcal{L}_{n,h}[u_h^n] v_h, v_h) \geq 0$ for any $v_h\in V_h$, which
implies the convexity of the discrete energy with mass lumping
$E_{n,I}^{\rm AC}(\cdot; u_h^{n-1})$ defined in \eqref{equ:FIS-AC-energy-lumping}.
Thus, the uniqueness and existence of FIS with mass lumping hold when
$k_n \leq \epsilon^2$.  Further, we can design a preconditioner for
$\mathcal{L}_{n,h}[u_h^n]$ as 
\begin{equation} \label{preconditioner-AC}
\mathcal{B}_{n,h} = \left(\frac{1-\gamma_n}{k_n}{\mathcal{I}}_h -
    {\Delta}_h\right)^{-1}.
\end{equation}
Then, we have the following theorem directly followed from
the Theorem \ref{thm:well-conditioned}.
\begin{theorem} \label{thm:condition-AC}
It holds that 
\begin{equation} \label{condition-AC}
\kappa(\mathcal{B}_{n,h}\mathcal{L}_{n,h}[u_h^n]) \leq
\frac{1+2\gamma_n}{1-\gamma_n}.
\end{equation}
\end{theorem}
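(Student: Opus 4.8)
The plan is to deduce the estimate directly from the spectral equivalence already recorded in Theorem~\ref{thm:well-conditioned}, by comparing $\mathcal{L}_{n,h}[u_h^n]$ with the preconditioner. Set $\mathcal{A}_{n,h} := \mathcal{B}_{n,h}^{-1} = \frac{1-\gamma_n}{k_n}\mathcal{I}_h - \Delta_h$. We work in the regime $\gamma_n = k_n/\epsilon^2 < 1$ (equivalently $k_n \le \epsilon^2$), which is the standing assumption that makes both the preconditioner and the right-hand side of \eqref{condition-AC} meaningful. Since $(\mathcal{I}_h v_h, v_h) = \|v_h\|_h^2 > 0$ for $v_h \neq 0$ and $(-\Delta_h v_h, v_h) = \|\nabla v_h\|_{L^2(\Omega)}^2 \geq 0$, the operator $\mathcal{A}_{n,h}$ is symmetric positive definite on $V_h$ with respect to $(\cdot,\cdot)$; hence so are $\mathcal{B}_{n,h}$ and their square roots.

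Next I would upgrade the two-sided bound \eqref{eq:upper-lower} into the sandwich $\mathcal{A}_{n,h} \leq \mathcal{L}_{n,h}[u_h^n] \leq \frac{1+2\gamma_n}{1-\gamma_n}\,\mathcal{A}_{n,h}$. The left inequality is exactly the left half of \eqref{eq:upper-lower}. For the right inequality, observe that
\[
\frac{1+2\gamma_n}{k_n}\mathcal{I}_h - \Delta_h
= \frac{1+2\gamma_n}{1-\gamma_n}\cdot\frac{1-\gamma_n}{k_n}\mathcal{I}_h - \Delta_h
\leq \frac{1+2\gamma_n}{1-\gamma_n}\left(\frac{1-\gamma_n}{k_n}\mathcal{I}_h - \Delta_h\right),
\]
where the last step uses $-\Delta_h \geq 0$ together with $\frac{1+2\gamma_n}{1-\gamma_n} \geq 1$, so that inflating the coefficient of $-\Delta_h$ only enlarges the operator; combining with the right half of \eqref{eq:upper-lower} gives the claimed upper bound.

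Finally I would translate the operator sandwich into the condition-number estimate. The operator $\mathcal{B}_{n,h}\mathcal{L}_{n,h}[u_h^n] = \mathcal{A}_{n,h}^{-1}\mathcal{L}_{n,h}[u_h^n]$ is similar, via conjugation by $\mathcal{A}_{n,h}^{1/2}$, to the symmetric operator $\mathcal{A}_{n,h}^{-1/2}\mathcal{L}_{n,h}[u_h^n]\mathcal{A}_{n,h}^{-1/2}$ (symmetric because $\mathcal{L}_{n,h}[u_h^n]$ in \eqref{frechet-lumping} is a symmetric form), so its spectrum is real and coincides with the generalized eigenvalues $\lambda$ of $\mathcal{L}_{n,h}[u_h^n] v_h = \lambda \mathcal{A}_{n,h} v_h$. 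The sandwich yields, by the Rayleigh-quotient characterization,
\[
1 \leq \frac{(\mathcal{L}_{n,h}[u_h^n] v_h, v_h)}{(\mathcal{A}_{n,h} v_h, v_h)} \leq \frac{1+2\gamma_n}{1-\gamma_n}
\qquad \forall\, v_h \in V_h\setminus\{0\},
\]
so every eigenvalue of $\mathcal{B}_{n,h}\mathcal{L}_{n,h}[u_h^n]$ lies in $[\,1,\ \tfrac{1+2\gamma_n}{1-\gamma_n}\,]$ and therefore $\kappa = \lambda_{\max}/\lambda_{\min} \leq \frac{1+2\gamma_n}{1-\gamma_n}$. There is no genuinely hard step here; the only points requiring care are (i) making the standing hypothesis $\gamma_n < 1$ explicit, without which $\mathcal{B}_{n,h}$ is not positive definite and \eqref{condition-AC} is vacuous, and (ii) noting that $\mathcal{B}_{n,h}\mathcal{L}_{n,h}[u_h^n]$ is not $(\cdot,\cdot)$-symmetric, so $\kappa$ must be interpreted as the ratio of extreme eigenvalues of the symmetrized operator — which is precisely what the Rayleigh-quotient bound provides.
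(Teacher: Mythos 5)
Your proposal is correct and follows essentially the same route as the paper, which simply asserts that the bound \eqref{condition-AC} is a direct consequence of the two-sided estimate \eqref{eq:upper-lower} in Theorem~\ref{thm:well-conditioned}; your write-up merely fills in the spectral-equivalence details (the sandwich $\mathcal{A}_{n,h}\le\mathcal{L}_{n,h}[u_h^n]\le\frac{1+2\gamma_n}{1-\gamma_n}\mathcal{A}_{n,h}$ and the Rayleigh-quotient argument) that the paper leaves implicit. Your explicit remark that the standing hypothesis $\gamma_n<1$ (i.e.\ $k_n\le\epsilon^2$) is needed for $\mathcal{B}_{n,h}$ to be positive definite is a worthwhile clarification, but it does not change the substance of the argument.
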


\begin{remark}
When the uniform meshes are used with $h^{-1}=\mathcal
O(\epsilon^{-1})$ and $k_n =\mathcal O(\epsilon^2)$, then it is
apparent that $\mathcal{L}_{n,h}[u_h^n]$ is already well-conditioned. 
Therefore, the above Theorem~\ref{thm:condition-AC} is of special
interest when the adaptive meshes are used. 
\end{remark}

\paragraph{Test 7} \label{test7} 
In this test, consider the initial condition
\eqref{eq:initial-smooth2} and the scheme
\eqref{fully-implicit-lumping}, and $\epsilon = 0.02$,
$k_n=\frac{\epsilon^2}{2}=2\times10^{-4}$.  The simulation on adaptive
meshes is partially based on the MATLAB software package $i$FEM
\cite{chen2008iFEM}, and the mesh refining and coarsening are based on
the error estimator in \cite{feng2005posteriori}.  The adaptive
tolerance is $10^{-5}$ and the maximal bisection level $J=20$.  When
the maximal bisection level increases, the number of degrees of
freedom (DOF) increases, then the numbers of iterations of CG and PCG
are compared in the Table \ref{tab1} to verify the theoretical
results.

\begin{table}[!htbp]
\begin{center}
\centering
\begin{tabular}{|l|c|c|c|c|c|c|c|c|c|c|c|c|c|c|}
\hline
DOF & 301 & 368 & 430 & 510 & 566 & 672 & 1276 & 1633 & 2044 & 2535 &
3217 & 4027 & 4610  \\ \hline
CG & 21 & 32 & 37& 38 & 41 & 45 & 58 & 61 & 68 & 78 & 96 & 106 & 117\\ \hline
PCG & 9  & 8 & 8 & 9 & 8 & 8 & 8 & 8 & 8 & 8 & 8 & 8 & 8\\ \hline
\end{tabular}
\smallskip
\caption{The number of iterations for CG and PCG.} 
\label{tab1} 
\end{center}
\end{table}

\section{Second-order schemes}
\label{sec:2nd-order}
In this section, we shall consider the second-order schemes.  

\subsection{(Modified) Crank-Nicolson scheme for the Allen-Cahn equation}
The standard Crank-Nicolson scheme for the Allen-Cahn equation, is to
seek $u_h^n\in V_h$ for $n = 1, 2, \cdots$, such that
\begin{align}\label{eq:SCN-AC}
\bigl(\frac{ u_h^{n}- u_h^{n-1}}{k_n},v_h\bigr)+\bigl(\frac{\nabla
    u_h^{n} + \nabla u_h^{n-1}}{2}, \nabla v_h\bigr)
+\frac{1}{2\epsilon^2}(f(u_h^n)+f(u_h^{n-1}), v_h) = 0 \qquad \forall
v_h\in V_h.
\end{align}
Although the standard Crank-Nicolson scheme can not be proved
energy-stable, in view of \eqref{eq:SCN-AC}, we can still show its
convexity by defining the following discrete energy 
\begin{equation}
\label{eq:SCN-AC-discrete-energy}
E_{n,\rm CN}^{\rm AC}(u_h; u_h^{n-1}) = \frac{1}{2}
\left\|\frac{\nabla u_h+\nabla u_h^{n-1}}{2}\right\|_{L^2(\Omega)}^2 +
\frac{1}{4k_n}\|u_h-u_h^{n-1}\|^2_{L^2(\Omega)}
+ \frac{1}{4\epsilon^2}\int_{\Omega} F(u_h) + f(u_h^{n-1})u_h~dx.
\end{equation}
\begin{theorem}\label{thm:convexity-SCN}
Under the condition that $k_n \leq 2\epsilon^2$, we have  
\begin{enumerate}
\item $E_{n,\rm CN}^{\rm AC}(\cdot; u_h^{n-1})$ is strictly convex on $V_h$;
\item The solution of the modified Crank-Nicolson scheme
\eqref{eq:SCN-AC} satisfies
 \begin{align}\notag
 u_h^{n}&= \underset{u_h\in V_h}{\mathrm{argmin}}\, E_{n,\rm CN}^{\rm AC}(u_h; u_h^{n-1}),
\end{align}
which is uniquely solvable.
\end{enumerate}
\end{theorem}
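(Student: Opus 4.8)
The plan is to mirror the structure of the proof of Theorem~\ref{thm:convexity-FIS-AC}. First I would compute the second Fr\'echet derivative of $E_{n,\rm CN}^{\rm AC}(\cdot; u_h^{n-1})$. Since the quadratic pieces $\frac12\|(\nabla u_h+\nabla u_h^{n-1})/2\|^2$ and $\frac{1}{4k_n}\|u_h-u_h^{n-1}\|^2$ contribute $\frac14\|\nabla v_h\|_{L^2(\Omega)}^2$ and $\frac{1}{2k_n}\|v_h\|_{L^2(\Omega)}^2$ respectively, and the linear term $\frac{1}{4\epsilon^2}\int_\Omega f(u_h^{n-1})u_h\,dx$ contributes nothing, only the $\frac{1}{4\epsilon^2}\int_\Omega F(u_h)\,dx$ term carries the nonlinearity. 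With $F(u)=\frac14(u^2-1)^2$ we have $F''(u)=3u^2-1$, so for any $v_h\in V_h$,
\begin{equation*}
(E_{n,\rm CN}^{\rm AC})''(u_h;u_h^{n-1})(v_h,v_h)
= \frac14\|\nabla v_h\|_{L^2(\Omega)}^2
+ \frac{1}{2k_n}\|v_h\|_{L^2(\Omega)}^2
+ \frac{1}{4\epsilon^2}\int_\Omega (3u_h^2-1)v_h^2\,dx.
\end{equation*}
Bounding $3u_h^2-1\ge -1$ pointwise, the last term is at least $-\frac{1}{4\epsilon^2}\|v_h\|_{L^2(\Omega)}^2$, so the Hessian is bounded below by $\bigl(\frac{1}{2k_n}-\frac{1}{4\epsilon^2}\bigr)\|v_h\|_{L^2(\Omega)}^2 + \frac14\|\nabla v_h\|_{L^2(\Omega)}^2$. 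Under $k_n\le 2\epsilon^2$ the coefficient $\frac{1}{2k_n}-\frac{1}{4\epsilon^2}\ge 0$, and when $v_h\ne 0$ the gradient term (or the mass term, in case $v_h$ is constant and nonzero) is strictly positive; hence strict convexity on $V_h$.

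Next I would verify the Euler--Lagrange equation. Computing $(E_{n,\rm CN}^{\rm AC})'(u_h;u_h^{n-1})(v_h)$ gives
\begin{equation*}
\bigl(\tfrac{\nabla u_h+\nabla u_h^{n-1}}{2},\nabla v_h\bigr)
+ \tfrac{1}{2k_n}(u_h-u_h^{n-1},v_h)
+ \tfrac{1}{4\epsilon^2}\bigl(f(u_h)+f(u_h^{n-1}),v_h\bigr),
\end{equation*}
and multiplying by $2$ shows that the critical point equation is exactly \eqref{eq:SCN-AC}. For the coercivity needed to invoke \cite{ciarlet1989introduction}: the linear term $\frac{1}{4\epsilon^2}\int_\Omega f(u_h^{n-1})u_h\,dx$ is dominated by the growth of $\frac{1}{4\epsilon^2}\int_\Omega F(u_h)\,dx$ (quartic versus linear in $u_h$), and together with the $\frac14\|\nabla u_h\|^2$ and $\frac{1}{4k_n}\|u_h-u_h^{n-1}\|^2$ terms one obtains a bound of the form $E_{n,\rm CN}^{\rm AC}(u_h;u_h^{n-1})\ge M_1\|u_h\|_{H^1(\Omega)}^2-M_2$ with $M_1,M_2>0$ depending on $\epsilon$, $k_n$, and $u_h^{n-1}$. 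Strict convexity plus coercivity on the finite-dimensional space $V_h$ yields existence and uniqueness of the minimizer, which by the Euler--Lagrange identity coincides with the unique solution of \eqref{eq:SCN-AC}.

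I do not expect a serious obstacle here — the argument is essentially identical in spirit to Theorem~\ref{thm:convexity-FIS-AC}, just with the factor $\frac12$ from the Crank--Nicolson averaging producing the threshold $2\epsilon^2$ instead of $\epsilon^2$. The one point requiring a little care is the handling of constant test functions in the strict-convexity step (where $\|\nabla v_h\|=0$): one must note that the mass term $\frac{1}{2k_n}\|v_h\|^2$ is then strictly positive, so strictness is not lost. A second minor subtlety is confirming that the linear term in the energy, which is not present in the Allen--Cahn case of Theorem~\ref{thm:convexity-FIS-AC}, does not spoil coercivity; but since it is only linear in $u_h$ while $F$ grows quartically, Young's inequality absorbs it, so this is routine.
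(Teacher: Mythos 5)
Your proposal follows exactly the paper's route: compute the second variation, obtain $\frac14\|\nabla v_h\|_{L^2(\Omega)}^2+(\frac{1}{2k_n}-\frac{1}{4\epsilon^2})\|v_h\|_{L^2(\Omega)}^2+\frac{1}{4\epsilon^2}\int_\Omega 3u_h^2v_h^2\,dx$ (your form with $3u_h^2-1$ is the same expression), conclude strict convexity for $k_n\le 2\epsilon^2$, and then handle the Euler--Lagrange identity, coercivity and unique solvability in the standard way; the paper's own proof is just this Hessian computation plus ``the rest is standard.'' One slip to fix: your written first variation drops a factor $\frac12$ on the gradient piece --- the derivative of $\frac12\bigl\|\frac{\nabla u_h+\nabla u_h^{n-1}}{2}\bigr\|_{L^2(\Omega)}^2$ in the direction $v_h$ is $\frac14(\nabla u_h+\nabla u_h^{n-1},\nabla v_h)$, not $\bigl(\frac{\nabla u_h+\nabla u_h^{n-1}}{2},\nabla v_h\bigr)$, as your own (correct) second variation with the $\frac14\|\nabla v_h\|^2$ term already indicates. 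With that correction, multiplying the critical-point equation by $2$ does recover \eqref{eq:SCN-AC} exactly; as written, the gradient term would come out doubled. The treatment of constant directions in the strictness argument and the absorption of the linear term $\frac{1}{4\epsilon^2}\int_\Omega f(u_h^{n-1})u_h\,dx$ into the quartic growth for coercivity are both fine and consistent with the argument pattern of Theorem \ref{thm:convexity-FIS-AC}.
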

\begin{proof}
A direct calculation shows that
\begin{equation}
\label{convexity-SCN}
(E_{n,\rm CN}^{\rm AC})''(u_h; u_h^{n-1})(v_h,v_h)= \frac14\|\nabla
 v_h\|_{L^2(\Omega)}^2
 +(\frac{1}{2k_n}-\frac{1}{4\epsilon^2})\|v_h\|^2_{L^2(\Omega)}
+ \frac{1}{4\epsilon^2}\int_{\Omega}3u_h^2v_h^2~dx.\nonumber
\end{equation}
This implies that $E_{n,\rm CN}^{\rm AC}(\cdot; u_h^{n-1})$ is a strictly
convex functional when $k_n \leq 2\epsilon^2$. The rest of the proof is
standard.
\end{proof}

With the purpose of energy stability, the {\it modified Crank-Nicolson
scheme} \cite{du1991numerical, shen2010numerical,
condette2011spectral} is constructed as follows: Find $u_h^n\in V_h$
for $n = 1, 2, \cdots$, such that
\begin{align}\label{eq:CN-AC}
\bigl(\frac{ u_h^{n}- u_h^{n-1}}{k_n},v_h\bigr)+\bigl(\frac{\nabla
    u_h^{n} + \nabla u_h^{n-1}}{2}, \nabla v_h\bigr)
+\frac{1}{\epsilon^2}(\tilde F[u_h^n, u_h^{n-1}], v_h) = 0 \qquad \forall
v_h\in V_h,
\end{align}
where
$$
\tilde F[u, u_h^{n-1}]=
\begin{cases}
\frac{F(u)-F(u_h^{n-1})}{ u - u_h^{n-1}} & u\neq u^{n-1}_h,\\
u^3 - u & u =  u^{n-1}_h.
\end{cases}
$$

\begin{lemma}[\cite{shen2010numerical,condette2011spectral}]
\label{thm:unconditionally-CSS}
The modified Crank-Nicolson scheme \eqref{eq:CN-AC} is
unconditionally energy stable.  More precisely, for any $k_n>0$,
\begin{align}\label{eq:CN-AC-energy}
J_\epsilon^{\rm AC}( u_h^{n}) + \frac{1}{k_n}\|u_h^{n} -
u_h^{n-1}\|_{L^2(\Omega)}^2 = J_\epsilon^{\rm AC}( u_h^{n-1}).
\end{align}
\end{lemma}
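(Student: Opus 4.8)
The plan is to verify the energy identity \eqref{eq:CN-AC-energy} by testing \eqref{eq:CN-AC} with the natural choice $v_h = u_h^n - u_h^{n-1}$ and evaluating each of the three resulting terms. The first term gives $\frac{1}{k_n}\|u_h^n - u_h^{n-1}\|_{L^2(\Omega)}^2$ directly. The second term, $\bigl(\frac{\nabla u_h^n + \nabla u_h^{n-1}}{2}, \nabla(u_h^n - u_h^{n-1})\bigr)$, telescopes exactly to $\frac12\|\nabla u_h^n\|_{L^2(\Omega)}^2 - \frac12\|\nabla u_h^{n-1}\|_{L^2(\Omega)}^2$ by the elementary identity $(a+b)(a-b) = a^2 - b^2$ applied pointwise. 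The crux is the third term: I must show $\bigl(\tilde F[u_h^n, u_h^{n-1}], u_h^n - u_h^{n-1}\bigr) = \int_\Omega \bigl(F(u_h^n) - F(u_h^{n-1})\bigr)\,dx$, which is exactly why the difference-quotient form of $\tilde F$ was chosen: pointwise, $\tilde F[u_h^n, u_h^{n-1}]\cdot(u_h^n - u_h^{n-1}) = F(u_h^n) - F(u_h^{n-1})$ wherever $u_h^n \neq u_h^{n-1}$, and both sides vanish where $u_h^n = u_h^{n-1}$, so the identity holds almost everywhere and hence after integration.

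Assembling these three pieces, the tested equation reads
\begin{equation*}
\frac{1}{k_n}\|u_h^n - u_h^{n-1}\|_{L^2(\Omega)}^2 + \frac12\|\nabla u_h^n\|_{L^2(\Omega)}^2 - \frac12\|\nabla u_h^{n-1}\|_{L^2(\Omega)}^2 + \frac{1}{\epsilon^2}\int_\Omega \bigl(F(u_h^n) - F(u_h^{n-1})\bigr)\,dx = 0.
\end{equation*}
Recalling the definition $J_\epsilon^{\rm AC}(v) = \int_\Omega \bigl(\frac12|\nabla v|^2 + \frac{1}{\epsilon^2}F(v)\bigr)\,dx$ from \eqref{eq:energy-AC}, the gradient and potential contributions combine into $J_\epsilon^{\rm AC}(u_h^n) - J_\epsilon^{\rm AC}(u_h^{n-1})$, and rearranging yields \eqref{eq:CN-AC-energy} with equality (not merely inequality), valid for every $k_n > 0$ with no step-size restriction. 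Since $\frac{1}{k_n}\|u_h^n - u_h^{n-1}\|_{L^2(\Omega)}^2 \geq 0$, unconditional energy stability $J_\epsilon^{\rm AC}(u_h^n) \leq J_\epsilon^{\rm AC}(u_h^{n-1})$ follows immediately.

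The only genuine subtlety — and the step I would state carefully rather than wave through — is justifying the pointwise identity for $\tilde F$ at points where $u_h^n(x) = u_h^{n-1}(x)$. There $\tilde F$ is defined by the limiting value $u^3 - u = F'(u)$, but since it is multiplied by $u_h^n(x) - u_h^{n-1}(x) = 0$, the product is $0$, and $F(u_h^n(x)) - F(u_h^{n-1}(x)) = 0$ as well, so the two sides agree regardless of the definition chosen on that (measure-zero or not) set; no continuity of $\tilde F$ is actually needed for the integral identity. One should also note that, strictly, $v_h = u_h^n - u_h^{n-1}$ is a legitimate test function since $u_h^n, u_h^{n-1} \in V_h$. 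I do not expect any real obstacle here; this is a short, clean computation, and I would present it as such, remarking only that the whole point of the modification from the standard scheme \eqref{eq:SCN-AC} to \eqref{eq:CN-AC} is precisely to replace $\frac12(f(u_h^n) + f(u_h^{n-1}))$ — whose inner product with $u_h^n - u_h^{n-1}$ is \emph{not} exactly $\int_\Omega(F(u_h^n) - F(u_h^{n-1}))\,dx$ — by the difference quotient that makes the telescoping exact.
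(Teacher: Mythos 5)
Your proposal is correct and follows exactly the paper's own argument: the paper proves \eqref{eq:CN-AC-energy} simply by taking $v_h = u_h^{n} - u_h^{n-1}$ in \eqref{eq:CN-AC}, which is precisely the computation you spell out (including the telescoping of the averaged gradient term and the exact difference-quotient identity for $\tilde F$). Your extra care at points where $u_h^n = u_h^{n-1}$ is a fine detail but changes nothing; the two proofs are the same.
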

\begin{proof}
\eqref{eq:CN-AC-energy} is an immediate consequence by taking
$v_h=u_h^{n} - u_h^{n-1}$ in \eqref{eq:CN-AC}. 
\end{proof}

The modified Crank-Nicolson scheme \eqref{eq:CN-AC} is
unconditionally energy-stable but it is not unconditionally convex as
we shall see below. In view of \eqref{eq:CN-AC}, we define the
following discrete energy 

\begin{equation}
\label{eq:CN-AC-discrete-energy}
E_{n,\rm MCN}^{\rm AC}(u_h; u_h^{n-1}) = \frac12\left\| \frac{\nabla u_h +
  \nabla u_h^{n-1}}{2}\right\|_{L^2(\Omega)}^2 +
  \frac{1}{4k_n}\|u_h-u_h^{n-1}\|^2_{L^2(\Omega)} +
  \frac{1}{2\epsilon^2}\int_{\Omega}\check{G}(u_h; u_h^{n-1})~dx, \\
\end{equation}
where $\check{G}(u_h; u_h^{n-1}) = \check{G}_+(u_h; u_h^{n-1}) -
\check{G}_-(u_h;u_h^{n-1})$, and
$$
\check{G}_+(u_h; u_h^{n-1})=\frac{1}{4}\bigl[ \frac{1}{4}u_h^4 +
\frac{u_h^{n-1}}{3} u_h^3 + \frac{(u_h^{n-1})^2}{2} u_h^2 + (u_h^{n-1})^3u_h
\bigr]\quad \text{and} \quad 
\check{G}_-(u_h; u_h^{n-1})=
\frac14u_h^2+\frac12u_hu_h^{n-1}. 
$$
\begin{theorem}\label{thm:convexity-CN}
Under the condition that $k \leq 2\epsilon^2$, we have  
\begin{enumerate}
\item $E_{n,\rm MCN}^{\rm AC}(\cdot;u_h^{n-1})$ is strictly convex on $V_h$;
\item The solution of the modified Crank-Nicolson scheme
\eqref{eq:CN-AC} satisfies
 \begin{align}\notag
 u_h^{n}&= \underset{u_h\in V_h}{\mathrm{argmin}}\, E_{n,\rm MCN}^{\rm AC}(u_h; u_h^{n-1}),
\end{align}
which is uniquely solvable.
\end{enumerate}
\end{theorem}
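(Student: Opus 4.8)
The plan is to follow the template of Theorems~\ref{thm:convexity-FIS-AC} and \ref{thm:convexity-SCN}: compute the first and second Fr\'echet derivatives of $E_{n,\rm MCN}^{\rm AC}(\cdot;u_h^{n-1})$, deduce strict convexity from a sign condition on the second derivative under $k_n\le 2\epsilon^2$, and then identify the unique minimizer with the solution of \eqref{eq:CN-AC}. The only genuinely new bookkeeping concerns the split potential $\check G=\check G_+-\check G_-$.

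First I would record the two algebraic identities that tie \eqref{eq:CN-AC-discrete-energy} to the scheme \eqref{eq:CN-AC}. Differentiating in $u_h$ with $u_h^{n-1}$ frozen, one checks directly that
\[
\check G_+'(u)=\frac14\bigl(u^3+u^2u_h^{n-1}+u(u_h^{n-1})^2+(u_h^{n-1})^3\bigr)=\frac{F_+(u)-F_+(u_h^{n-1})}{u-u_h^{n-1}},\qquad
\check G_-'(u)=\frac12(u+u_h^{n-1})=\frac{F_-(u)-F_-(u_h^{n-1})}{u-u_h^{n-1}},
\]
so that $\check G'(u)=\tilde F[u,u_h^{n-1}]$ (with the limiting value $(u_h^{n-1})^3-u_h^{n-1}$ at $u=u_h^{n-1}$, consistent with the definition of $\tilde F$). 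Taking $(E_{n,\rm MCN}^{\rm AC})'(u_h;u_h^{n-1})$ and multiplying by $2$ then reproduces \eqref{eq:CN-AC} verbatim (the factor $2$ absorbs the $\frac12$, $\frac1{4k_n}$ and $\frac1{2\epsilon^2}$ in \eqref{eq:CN-AC-discrete-energy}), so the critical points of the energy and the solutions of the modified Crank-Nicolson scheme coincide.

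Next I would compute the second derivative. Using $\check G_+''(u)=\frac14\bigl(3u^2+2uu_h^{n-1}+(u_h^{n-1})^2\bigr)=\frac14\bigl(2u^2+(u+u_h^{n-1})^2\bigr)\ge 0$ and $\check G_-''(u)=\frac12$, one gets for all $v_h\in V_h$
\[
(E_{n,\rm MCN}^{\rm AC})''(u_h;u_h^{n-1})(v_h,v_h)=\frac14\|\nabla v_h\|_{L^2(\Omega)}^2+\frac{1}{2k_n}\|v_h\|_{L^2(\Omega)}^2+\frac{1}{2\epsilon^2}\int_\Omega\Bigl(\check G_+''(u_h)-\frac12\Bigr)v_h^2\,dx.
\]
Discarding the nonnegative term $\check G_+''(u_h)v_h^2$ leaves the lower bound $\frac14\|\nabla v_h\|_{L^2(\Omega)}^2+\bigl(\frac{1}{2k_n}-\frac{1}{4\epsilon^2}\bigr)\|v_h\|_{L^2(\Omega)}^2$, which is $\ge 0$ precisely when $k_n\le 2\epsilon^2$ and is $>0$ for $v_h\neq 0$; the degenerate sub-case of a nonzero constant $v_h$ at $k_n=2\epsilon^2$ is disposed of exactly as in Theorem~\ref{thm:convexity-SCN}. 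This is item (1). For item (2), strict convexity gives uniqueness, while existence of a global minimizer follows from coercivity: $\int_\Omega\check G_+(u_h)\,dx$ contains the positive quartic $\frac{1}{16}\|u_h\|_{L^4(\Omega)}^4$, which together with $\frac14\|\nabla u_h\|_{L^2(\Omega)}^2$ dominates the lower-order negative contributions, yielding an estimate of the form \eqref{eq:FIS-AC-energy-boundedness}. The minimizer is then characterized by $(E_{n,\rm MCN}^{\rm AC})'(u_h^n;u_h^{n-1})=0$, which by the identities above is equivalent to \eqref{eq:CN-AC}; hence \eqref{eq:CN-AC} is uniquely solvable with its solution being the asserted minimizer.

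I expect the only care-point to be the pair of reduction identities (that $\check G_\pm'$ are exactly the divided differences of $F_\pm$, and the attendant factor $2$ relating $(E_{n,\rm MCN}^{\rm AC})'$ to \eqref{eq:CN-AC}) together with the boundary case $k_n=2\epsilon^2$ in the strict-convexity claim; everything else is the routine already carried out in Theorems~\ref{thm:convexity-FIS-AC} and \ref{thm:convexity-SCN}.
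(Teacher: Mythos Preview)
Your proposal is correct and follows essentially the same approach as the paper: compute the second Fr\'echet derivative, observe that the term coming from $\check G_+$ is nonnegative (the paper writes it as $\frac{1}{8\epsilon^2}\int_\Omega[3u_h^2+2u_h^{n-1}u_h+(u_h^{n-1})^2]v_h^2\,dx$, which is your $\frac{1}{2\epsilon^2}\int_\Omega\check G_+''(u_h)v_h^2\,dx$), and conclude strict convexity from $\frac{1}{2k_n}-\frac{1}{4\epsilon^2}\ge 0$. If anything, you supply more detail than the paper does: the explicit identification $\check G'(u)=\tilde F[u,u_h^{n-1}]$ linking the Euler--Lagrange equation to \eqref{eq:CN-AC}, and the coercivity argument for existence, both of which the paper subsumes under ``the rest of the proof is standard.''
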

\begin{proof}
A direct calculation shows that
\begin{equation}
\label{convexity-MCN}
\begin{aligned}
 (E_{n,\rm MCN}^{\rm AC})''(u_h; u_h^{n-1})(v_h,v_h) &= \frac14\|\nabla v_h\|_{L^2(\Omega)}^2 +(\frac{1}{2k_n}-\frac{1}{4\epsilon^2})\|v_h\|^2_{L^2(\Omega)} \\
&+ \frac{1}{8\epsilon^2}\int_{\Omega}\bigl[ 3u_h^2+ 2u_h^{n-1}u_h
  + (u_h^{n-1})^2 \bigr]v_h^2~dx.
  \end{aligned}
\end{equation}
This implies that $E_{n, \rm MCN}^{\rm AC}(\cdot; u_h^{n-1})$ is a strictly convex
functional when $k_n\leq 2\epsilon^2$.  The rest of the proof is
standard.
\end{proof}
The ``convexity size'' of standard and modified Crank-Nicolson schemes
are the same. We also observe the similar numerical performance of
these two schemes (see Test 8, 9 and 11 below), although the standard
Crank-Nicolson does not satisfy the energy stability. 

\begin{remark}
Similar to the CSS \eqref{css}, we can obtain the corresponding
convex splitting version of the modified Crank-Nicolson scheme in the
following:
\begin{equation} \label{CSS-MCN}
(\frac{ u_h^{n}- u_h^{n-1}}{k_n},v_h)+(\frac{\nabla u_h^{n} +
\nabla u_h^{n-1}}{2}, \nabla v_h)
+\frac{1}{\epsilon^2} \left(g_+(u_h^{n}; u_h^{n-1})- g_{-}(u_h^{n-1};
      u_h^{n-1}), v_h\right) = 0\quad \forall v_h\in V_h,
\end{equation}
where 
$$
g_+(u_h; u_h^{n-1})=G_+'(u_h;u_h^{n-1})=\frac14\left[ u_h^3+ u_h^{n-1}u_h^2 + (u_h^{n-1})^2
u_h + (u_h^{n-1})^3 \right],
$$
$$
g_-(u_h; u_h^{n-1})=G_-'(u_h; u_h^{n-1}) = \frac12( u_h+ u_h^{n-1}).
$$
Similar to Theorem~\ref{thm:css-fis}, we know that the convex
splitting scheme~\eqref{CSS-MCN} can be recast as the modified
Crank-Nicolson scheme \eqref{eq:CN-AC} with the time step size 
$k_n' = \frac{2\epsilon^2}{k_n+2\epsilon^2}k_n$. This also shows the delay
effect of the convex splitting scheme \eqref{CSS-MCN} to the original
fully implicit scheme \eqref{eq:CN-AC}, but with a slightly
different delay-factor: $ \delta_n=\frac{2\epsilon^2}{k_n+2\epsilon^2}$.

Again, similar to the argument we made in \S~\ref{subsec:CSS-AC}, the
convex splitting scheme~\eqref{CSS-MCN} derived here is the same as
the original modified Crank-Nicolson scheme~\eqref{eq:CN-AC} in
disguise with a reduced time step size.
\end{remark}

\paragraph{Test 8} \label{test8a} 
In this simulation, we minimize the discrete energy
\eqref{eq:SCN-AC-discrete-energy} for the Allen-Cahn equation at each time
step. The computational domain is $\Omega=(-1,1)^2$, and
parameter is $\epsilon =5\times 10^{-3}$.  
In order to smooth the initial value, we first compute the solution
from $t = 0$ to $t = 0.01$ with $k = 10^{-3}$, namely 
$ k_n = 10^{-5}$ for $n = 1, 2, \cdots, 10.$
Then, we switch to $k_{11}=10^{-2}$.
After the smoothing the random initial value, we test the
dependency on the initial guess for the L-BFGS minimization algorithm. 
Figure
\ref{fig:ck-regular-global-local-minimizer} shows different results with
different initial guess for $u$ and using the standard Crank-Nicolson scheme. 
We observe that the result with the lowest energy is the one the closest to the reference solution.

\begin{figure}[!htbp]
\centering 
\subfloat[Reference solution at $t=0.02$]{
  \includegraphics[scale=0.25]{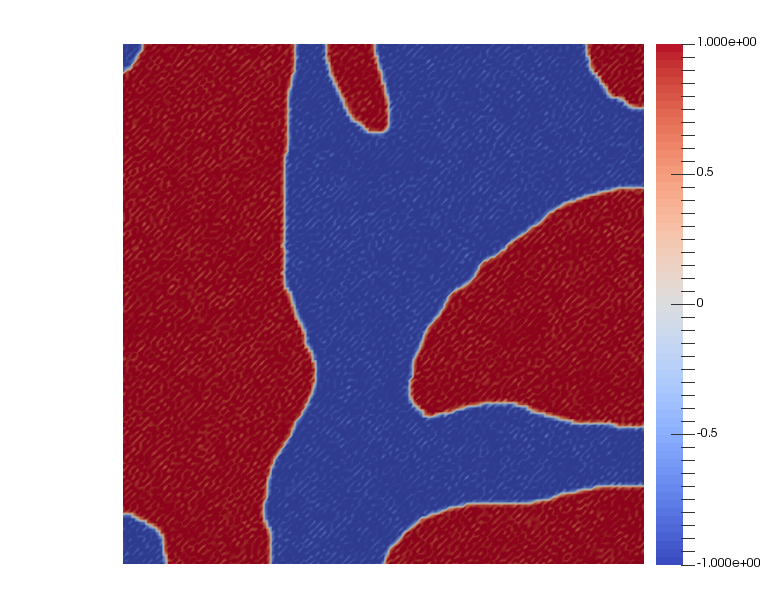} 
  \label{fig:regular-global-local-refb}
}%
\subfloat[$u_h^{10}$ as initial guess, $E_{1,\rm MCN}^{\rm AC} = 8.338$]{
  \includegraphics[scale=0.25]{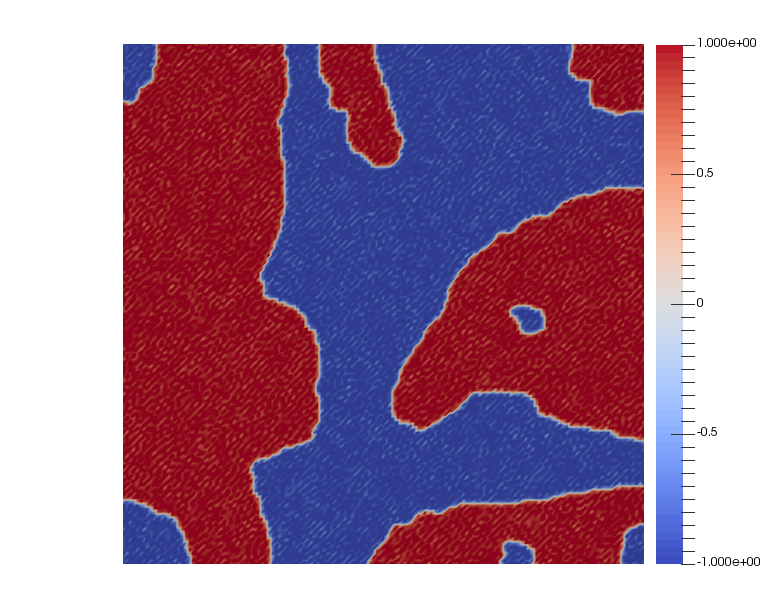} 
  \label{fig:regular-global-local-globalb}
}\\
\subfloat[Random initial guess, $E_{1,\rm MCN}^{\rm AC} = 13.017$]{
  \includegraphics[scale=0.25]{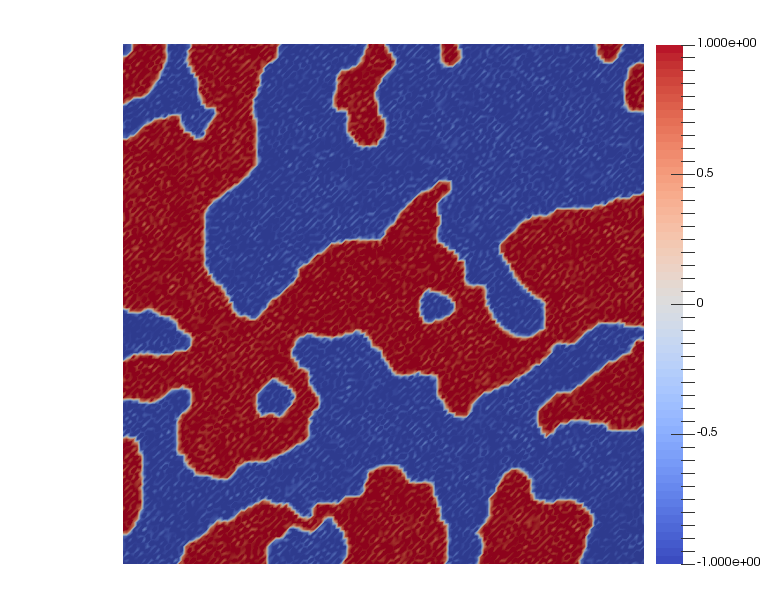} 
  \label{fig:regular-global-local-local1b}
}
\subfloat[Random initial guess, $E_{1,\rm MCN}^{\rm AC} = 12.569$]{
  \includegraphics[scale=0.25]{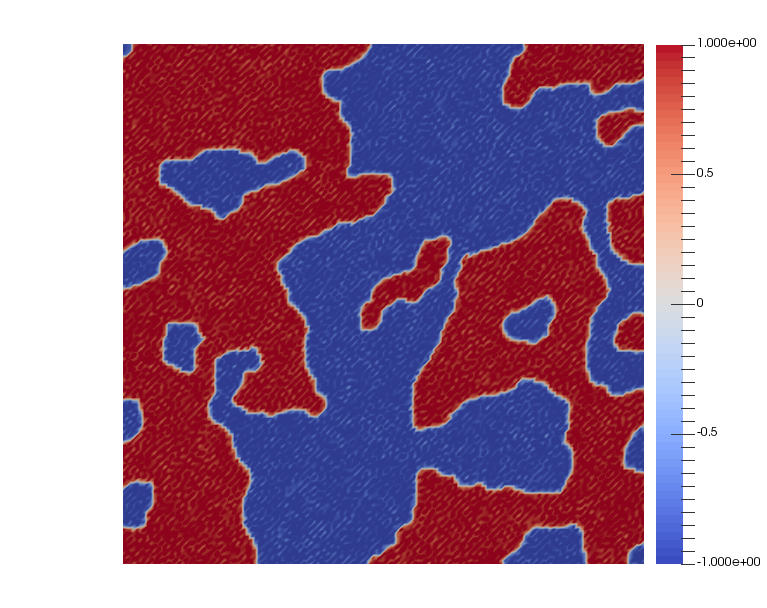} 
  \label{fig:regular-global-local-local2b}
}\\
\caption{The Allen-Cahn equation with random initial value and using standard Crank-Nicolson: Minimizers
at $t=1.1\times 10^{-2}$ for different initial guesses in the L-BFGS
algorithm.}
\label{fig:ck-regular-global-local-minimizer}
\end{figure}

\paragraph{Test 9} \label{test8} 
In this simulation, we minimize the discrete energy
\eqref{eq:CN-AC-discrete-energy} for the Allen-Cahn equation at each time
step. The computational domain and
parameter are the same as Test 8.  Figure
\ref{fig:ck-global-local-minimizer} shows different results with
random initial $u$ and using the modified Crank-Nicolson scheme.  Even
though any solution given by the modified Crank-Nicolson is
unconditionally energy stable, we observe that the result with the
lowest energy is the one the closest to the reference solution.
Moreover, the unconditionally stable scheme (e.g. modified
Crank-Nicolson) can not guarantee the physical solution. 

\begin{figure}[!htbp]
\centering 
\subfloat[Reference solution at $t=0.01$]{
  \includegraphics[scale=0.25]{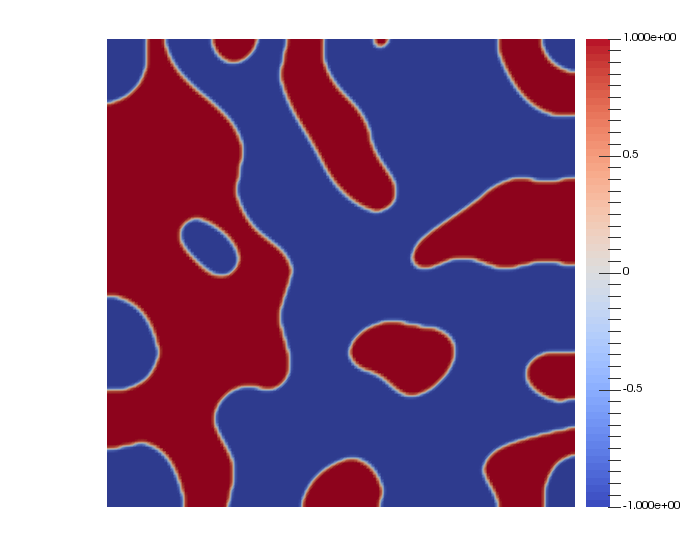} 
  \label{fig:global-local-refb}
}%
\subfloat[$u_h^{0}$ as initial guess, $E_{1,\rm MCN}^{\rm AC} = -297.176$]{
  \includegraphics[scale=0.25]{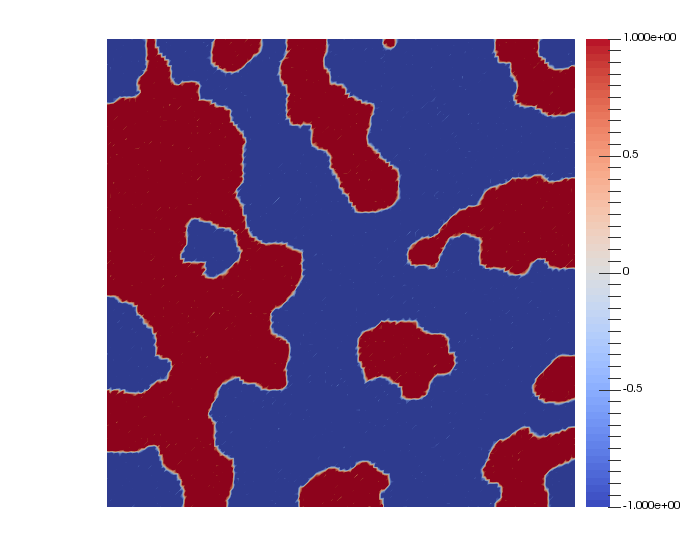} 
  \label{fig:global-local-globalb}
}\\
\subfloat[Random initial guess, $E_{1,\rm MCN}^{\rm AC} =-284.995$]{
  \includegraphics[scale=0.25]{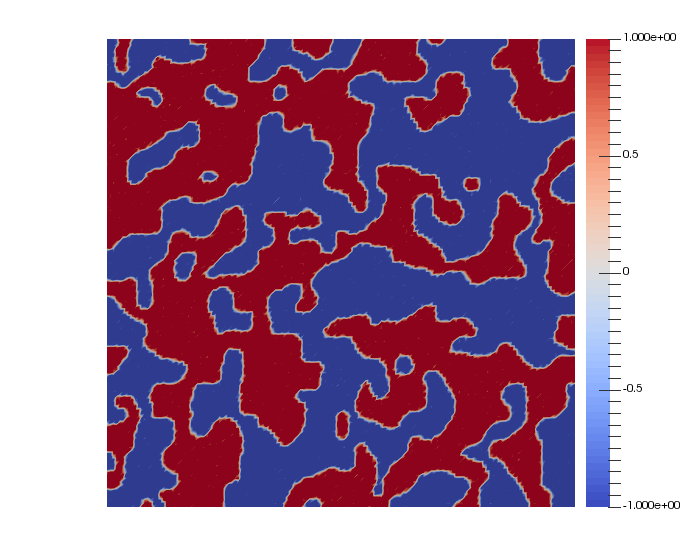} 
  \label{fig:global-local-local1b}
}
\subfloat[Random initial guess, $E_{1,\rm MCN}^{\rm AC} =-287.473$]{
  \includegraphics[scale=0.12]{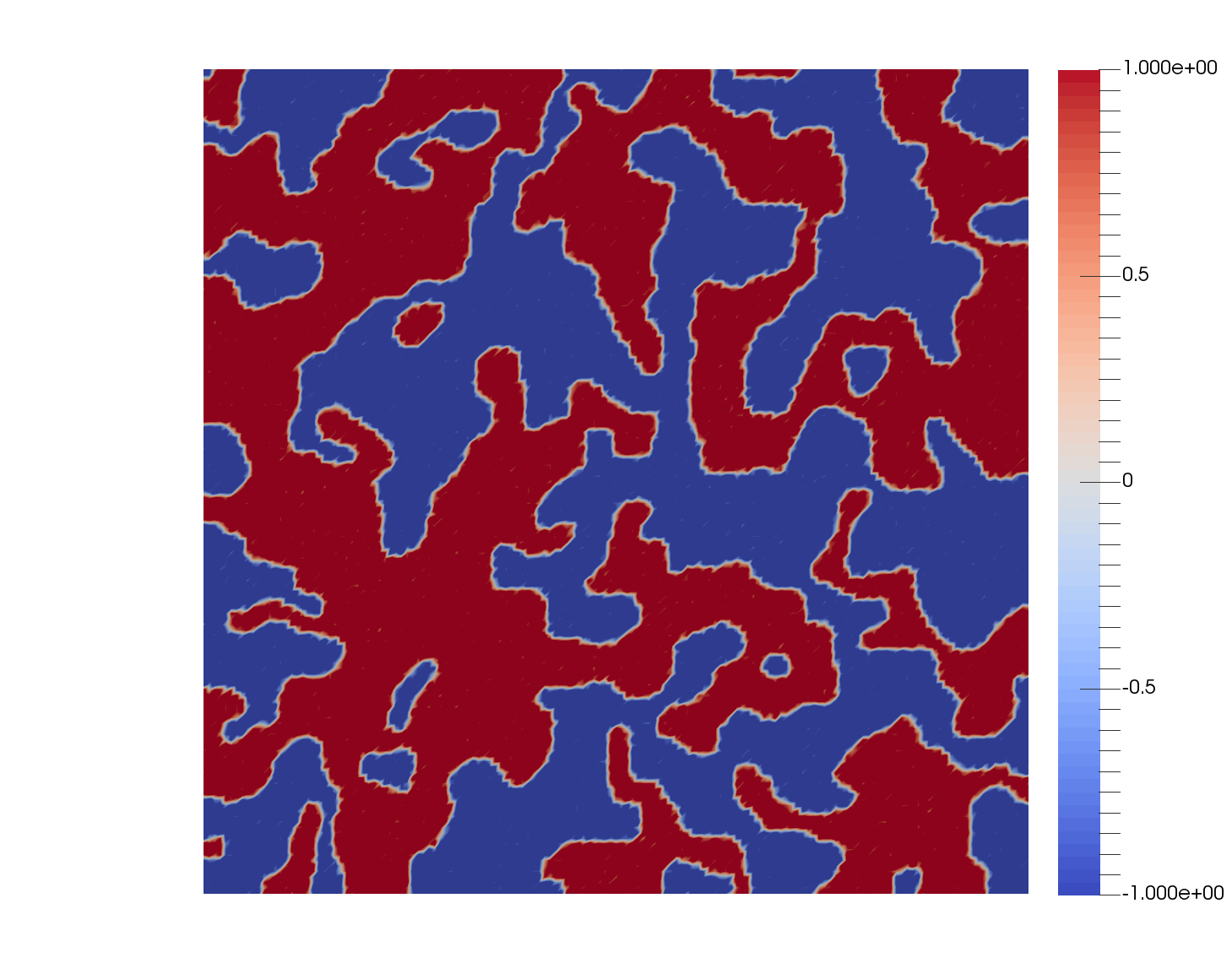} 
  \label{fig:global-local-local2b}
}\\
\caption{The Allen-Cahn equation with random initial value and using modified Crank-Nicolson: Minimizers
at $t=0.01$ for different initial guesses in the L-BFGS algorithm. (Here we add a constant to the discrete energy 
\eqref{eq:CN-AC-discrete-energy}, which does not affect the minimizers.)}
\label{fig:ck-global-local-minimizer}
\end{figure}

\paragraph{Test 10} \label{test9} 
Next, as done in the previous section, we evolve the Allen-Cahn
equation with different time step sizes to
see the two phases regroup. Three different computations with $k_n =
10^{-5}$ (convex case), $k_n = 10^{-4}$ and $k_n = 10^{-3}$
(non-convex cases) are considered. In Figure \ref{fig:ck-2} shows the
random initial value and the evolutions of the numerical solutions at
different $t$'s, using the modified Crank-Nicolson for time
discretization. It can be observed that the solutions in all these
cases behave similarly. Furthermore,    the evolutions of the physical
energies, see Figure \ref{fig:ck-3}, shows the energy-stability of the
energy minimization version of the modified Crank-Nicolson scheme,
which is in agreement with the Theorem \ref{thm:convexity-SCN}.

\begin{figure}[!htbp]
\centering 
\subfloat[$t=0$, $k_n = 10^{-5}$]{
  \includegraphics[scale=0.17]{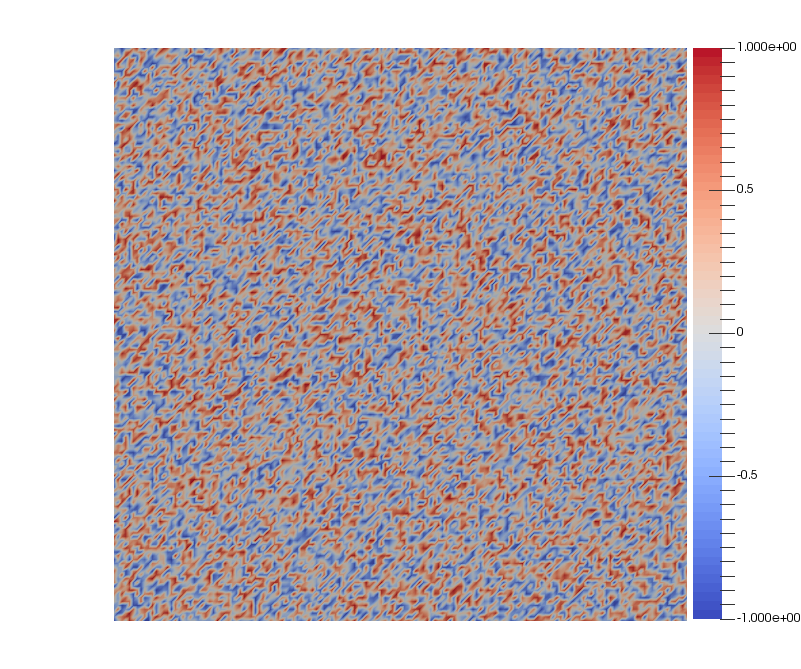}
}%
\subfloat[$t=0$, $k_n = 10^{-4}$]{
  \includegraphics[scale=0.17]{ck_1e-5_t=0.png}
}%
\subfloat[$t=0$, $k_n = 10^{-3}$]{
  \includegraphics[scale=0.17]{ck_1e-5_t=0.png}
} 
\\
\subfloat[$t=0.02$, $k_n = 10^{-5}$]{
  \includegraphics[scale=0.17]{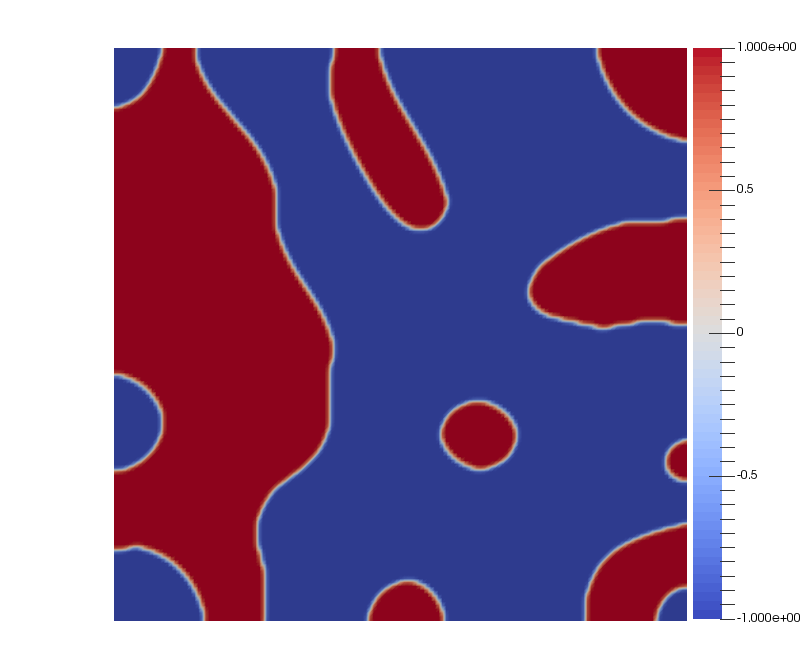}
} 
\subfloat[$t=0.02$, $k_n = 10^{-4}$]{
  \includegraphics[scale=0.17]{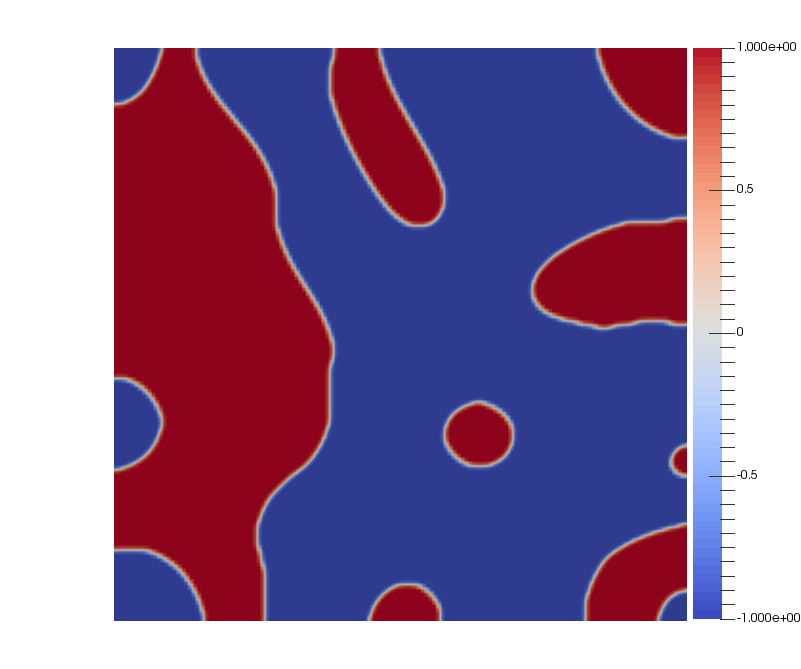}
}%
\subfloat[$t=0.02$, $k_n = 10^{-3}$]{
  \includegraphics[scale=0.17]{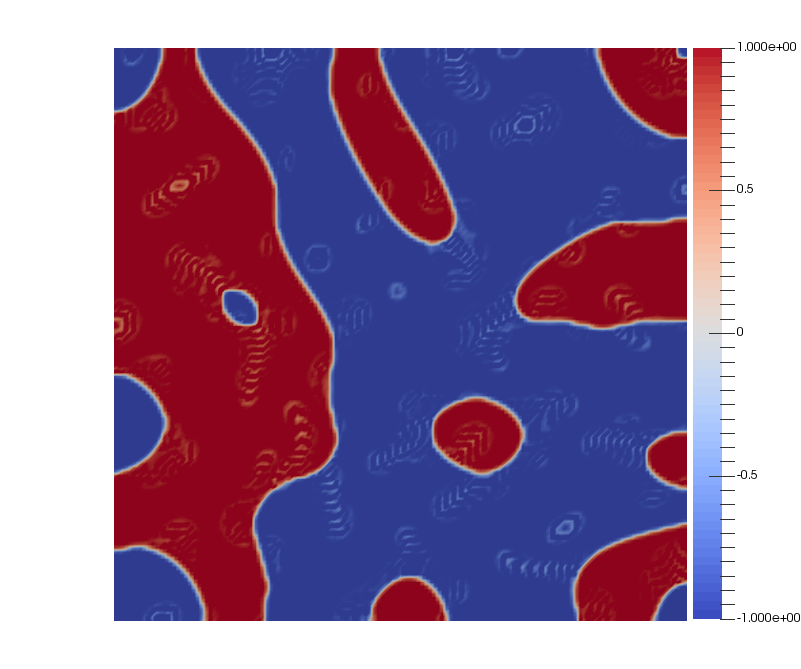} 
}%
\\
\subfloat[$t=0.05$, $k_n = 10^{-5}$]{
  \includegraphics[scale=0.17]{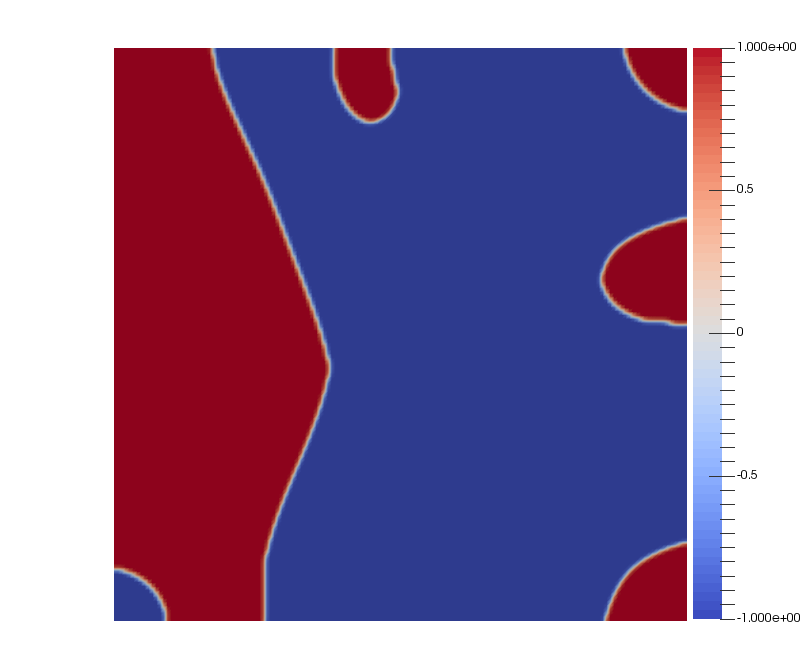}
} 
\subfloat[$t=0.05$, $k_n = 10^{-4}$]{
  \includegraphics[scale=0.17]{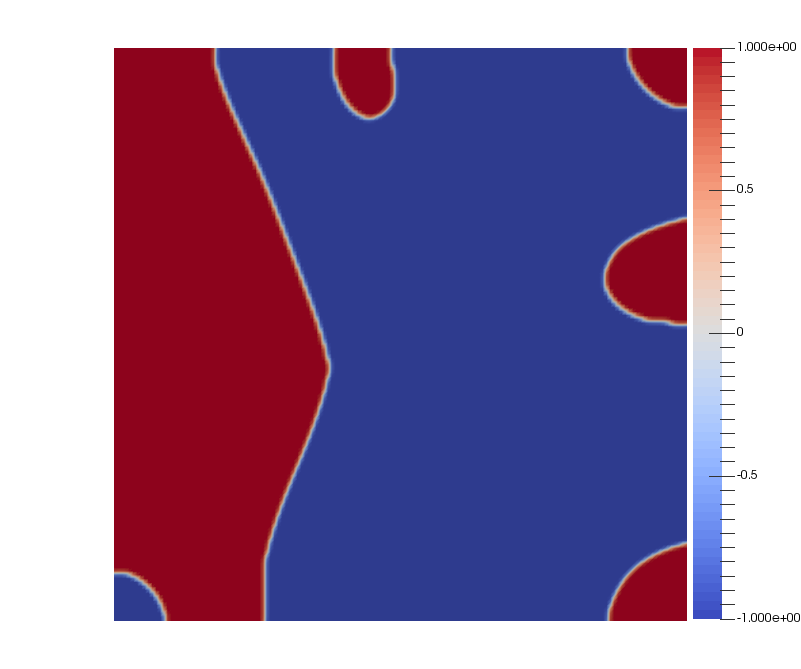}
}%
\subfloat[$t=0.05$, $k_n = 10^{-3}$]{
  \includegraphics[scale=0.17]{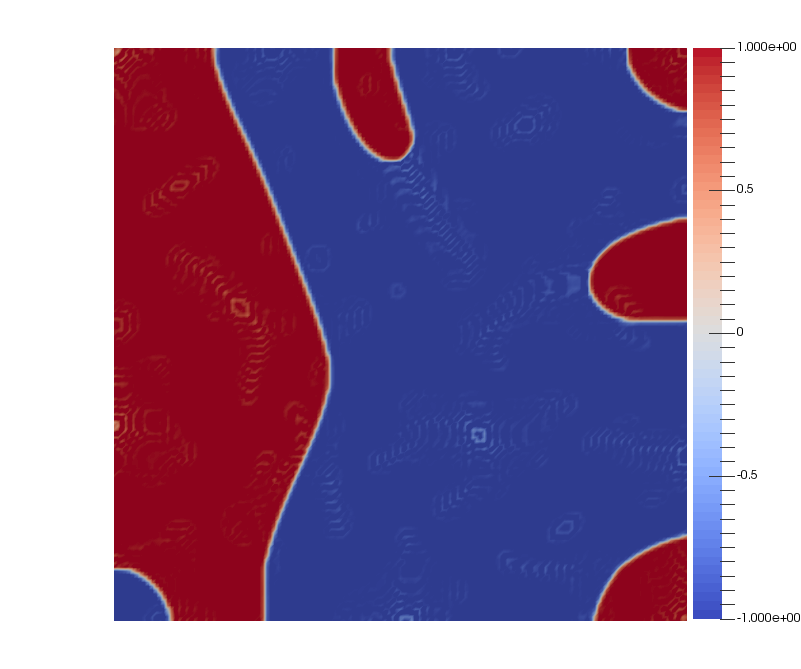}
}%
\\
\subfloat[$t=0.14$, $k_n = 10^{-5}$]{
  \includegraphics[scale=0.17]{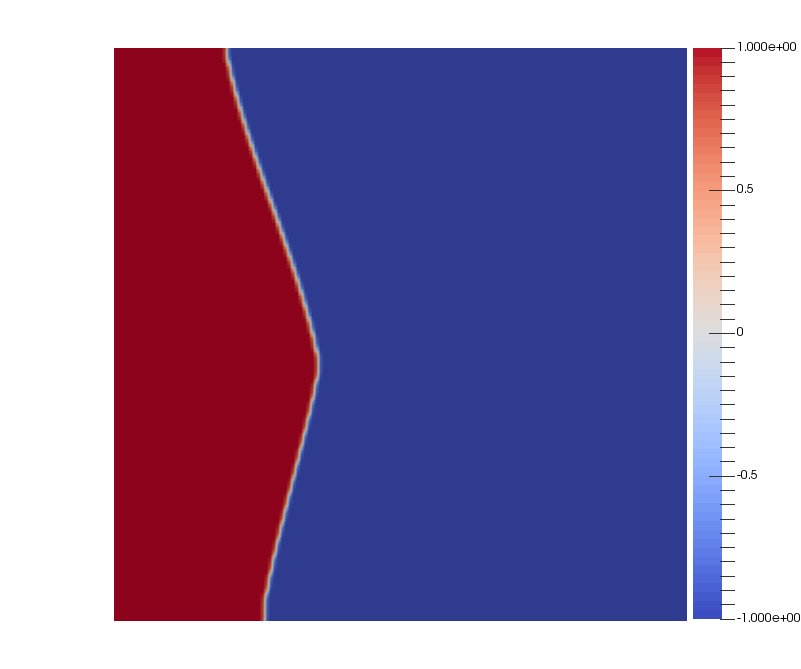}
} 
\subfloat[$t=0.14$, $k_n = 10^{-4}$]{
  \includegraphics[scale=0.17]{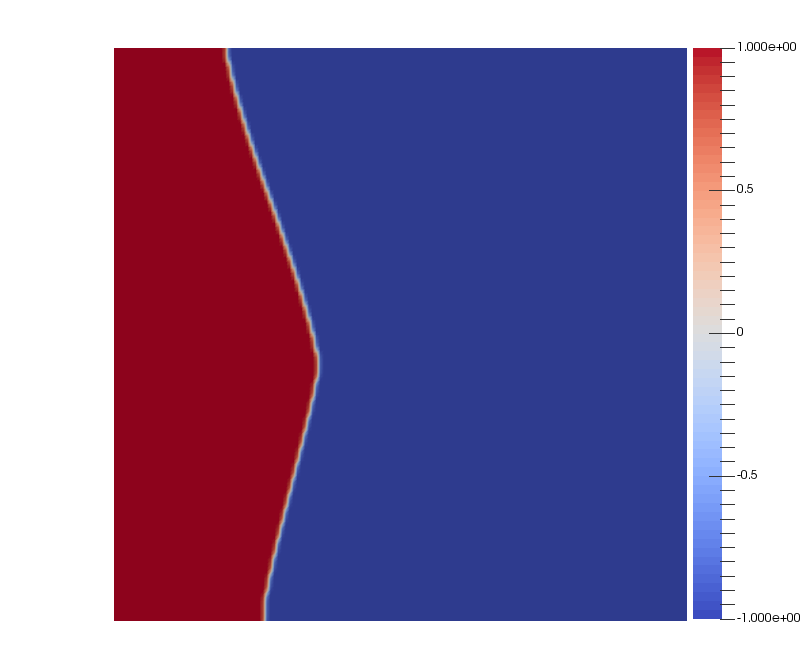}
}%
\subfloat[$t=0.14$, $k_n = 10^{-3}$]{
  \includegraphics[scale=0.17]{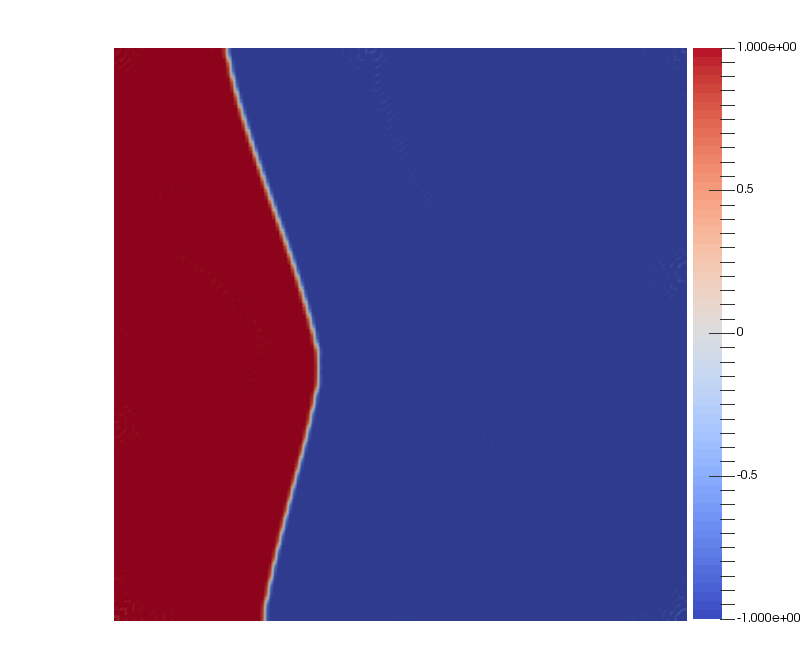}
}%
\\
\caption{The Allen-Cahn with random initial value: Plot of the
  solutions at different $t$'s.}
\label{fig:ck-2}
\end{figure}

\begin{figure}[!htbp]
\begin{center}
 \includegraphics[scale=0.6]{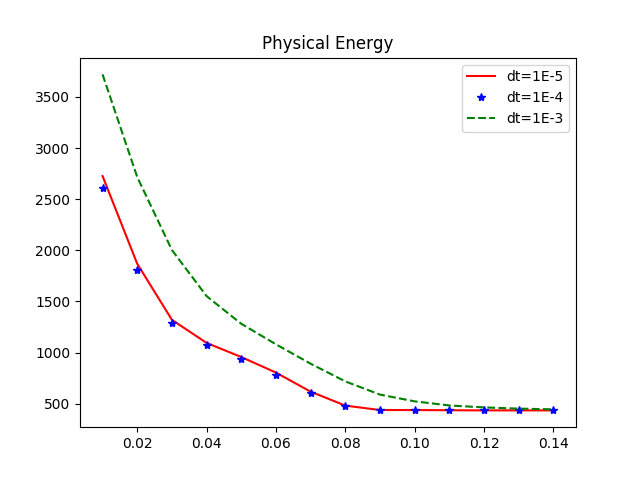}
\end{center}
\caption{The Allen-Cahn equation with random initial guess: Evolutions
  of physical energies.}
\label{fig:ck-3}
\end{figure}

\subsection{Modified Crank-Nicolson scheme for the Cahn-Hilliard equation}
The modified Crank-Nicolson scheme \cite{du1991numerical,
shen2010numerical, condette2011spectral} for the Cahn-Hilliard model
is defined as follows: Find $u_h^n \in V_h,\ w_h^n \in V_h$ for
$n=1,2,\cdots$, such that 
\begin{equation}\label{eq:CN-CH}
\begin{aligned}
(\frac{u_h^{n}-u_h^{n-1}}{k_n},\eta_h)+(\nabla w_h^{n},\nabla \eta_h) &=0
\qquad \forall \,\eta_h\in V_h, \\
\epsilon (\frac{\nabla u_h^n + \nabla u_h^{n-1}}{2},\nabla v_h) +
\frac{1}{\epsilon}(\tilde F[u_h^n,
u_h^{n-1}],v_h)-(w_h^{n},v_h) &=0 \qquad \forall\, v_h\in V_h. 
\end{aligned}
\end{equation}

\begin{lemma}[\cite{condette2011spectral, shen2010numerical}]
\label{lem_CR_CH_energy}
The modified Crank-Nicolson scheme \eqref{eq:CN-CH} is
unconditionally energy stable.  More precisely, for any $k_n>0$,
\begin{align}\label{eq_CR_CH_energy}
J_\epsilon^{\rm CH}( u_h^{n}) + \frac{1}{k_n}\|\nabla\Delta_h^{-1}(u_h^{n} -
u_h^{n-1})\|_{L^2(\Omega)}^2 = J_\epsilon^{\rm CH}( u_h^{n-1}).
\end{align}
\end{lemma}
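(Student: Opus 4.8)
The plan is to follow the same recipe that makes the Allen--Cahn scheme energy stable in Lemma~\ref{thm:unconditionally-CSS}, replacing the $L^2$-pairing with the discrete $H^{-1}$-pairing induced by $\Delta_h$. Concretely, I would test the two equations of \eqref{eq:CN-CH} with two well-chosen functions and then combine the resulting identities. No new ideas beyond test-function substitution and the defining properties of $\tilde F$ and $\Delta_h$ are needed.

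First I would test the second equation of \eqref{eq:CN-CH} with $v_h = u_h^n - u_h^{n-1}$. The gradient term telescopes, $\epsilon\bigl(\tfrac{\nabla u_h^n + \nabla u_h^{n-1}}{2}, \nabla(u_h^n - u_h^{n-1})\bigr) = \tfrac{\epsilon}{2}\bigl(\|\nabla u_h^n\|_{L^2(\Omega)}^2 - \|\nabla u_h^{n-1}\|_{L^2(\Omega)}^2\bigr)$, and by the very definition of $\tilde F$ one has the pointwise identity $\tilde F[u_h^n,u_h^{n-1}]\,(u_h^n - u_h^{n-1}) = F(u_h^n) - F(u_h^{n-1})$ (the case $u_h^n = u_h^{n-1}$ being trivial, as both sides vanish there), so $\tfrac{1}{\epsilon}(\tilde F[u_h^n,u_h^{n-1}], u_h^n - u_h^{n-1}) = \tfrac{1}{\epsilon}\int_\Omega \bigl(F(u_h^n) - F(u_h^{n-1})\bigr)\,dx$. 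Recalling the definition of $J_\epsilon^{\rm CH}$ in \eqref{eq:energy-CH}, this exactly says $J_\epsilon^{\rm CH}(u_h^n) - J_\epsilon^{\rm CH}(u_h^{n-1}) = (w_h^n, u_h^n - u_h^{n-1})$.

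It then remains to identify $(w_h^n, u_h^n - u_h^{n-1})$ with $-\tfrac{1}{k_n}\|\nabla\Delta_h^{-1}(u_h^n - u_h^{n-1})\|_{L^2(\Omega)}^2$. Taking $\eta_h = 1$ in the first equation of \eqref{eq:CN-CH} shows $u_h^n - u_h^{n-1}\in\mathring V_h$, so $\psi := \Delta_h^{-1}(u_h^n - u_h^{n-1})$ is well defined. I would then test the first equation of \eqref{eq:CN-CH} with $\eta_h = \psi$ and apply \eqref{eq:discrete-Laplace} twice: once with $v_h=w_h=\psi$ (using $\Delta_h\psi = u_h^n - u_h^{n-1}$) to get $(u_h^n - u_h^{n-1},\psi) = -\|\nabla\psi\|_{L^2(\Omega)}^2$, and once with $v_h=\psi$, $w_h=w_h^n$ to get $(\nabla w_h^n,\nabla\psi) = -(u_h^n - u_h^{n-1}, w_h^n)$. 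The first equation thus collapses to $(w_h^n, u_h^n - u_h^{n-1}) = -\tfrac{1}{k_n}\|\nabla\Delta_h^{-1}(u_h^n - u_h^{n-1})\|_{L^2(\Omega)}^2$. Substituting this into the identity of the previous step and rearranging gives precisely \eqref{eq_CR_CH_energy}.

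Since every step is a substitution plus bookkeeping, there is no real obstacle; the only point that needs a moment's care is the $\Delta_h^{-1}$ manipulation — one must first check $u_h^n - u_h^{n-1}$ has zero mean before inverting $\Delta_h$, and then invoke \eqref{eq:discrete-Laplace} in the correct direction so that the weighted difference $\frac{u_h^n-u_h^{n-1}}{k_n}$ indeed produces the squared discrete $H^{-1}$-norm with the right sign. As a consistency check, formally replacing $\Delta_h^{-1}$ by the continuous $\Delta^{-1}$ and $k_n$ by an infinitesimal time increment recovers the continuous dissipation law displayed in \eqref{eq:energy-CH}.
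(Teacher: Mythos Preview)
Your proof is correct and follows exactly the approach indicated in the paper: testing the second equation of \eqref{eq:CN-CH} with $v_h=u_h^n-u_h^{n-1}$ and the first with $\eta_h=\Delta_h^{-1}(u_h^n-u_h^{n-1})$. The paper's own proof is just a one-line pointer to these two test functions, so your write-up is simply a more detailed version of the same argument.
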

\begin{proof}
It can be directly proved by taking
$\eta_h=\Delta_h^{-1}(u_h^n-u_h^{n-1})$ and $v_h=u_h^n-u_h^{n-1}$ in
\eqref{eq:CN-CH}.  
\end{proof}

Consider the Cahn-Hilliard equation, we define the following discrete energy 
\begin{equation}\label{eq_CR_CH4}
E_{n,\rm MCN}^{\rm CH}(\theta_h; u_h^{n-1}) = \frac{\epsilon}{2}\left\| \frac{\nabla
\theta_h + \nabla u_h^{n-1}}{2}\right\|_{L^2(\Omega)}^2 + 
\frac{1}{4k_n}\|\nabla\Delta^{-1}_h(\theta_h-u_h^{n-1})\|^2_{L^2(\Omega)}
+ \frac{1}{2\epsilon}\int_{\Omega}\check{G}(\theta_h,u_h^{n-1})~dx.
\end{equation}

\begin{theorem}\label{thm_CR_CH1}
Under the assumption that $k_n \leq 8\epsilon^3$, we have  
\begin{enumerate}
\item $E_{n,\rm MCN}^{\rm CH}(\cdot;u_h^{n-1})$ is strictly convex on $\mathring{V}_h$;
\item The solution of the modified Crank-Nicolson scheme
\eqref{eq:CN-CH} satisfies
 \begin{align}\notag
u_h^{n}=u_h^{n-1}+\theta_h^n, \mbox{ with }
\theta_h^n= \underset{\theta_h\in \mathring{V}_h}{\mathrm{argmin}}\,E_{n,\rm MCN}^{\rm CH}(\theta_h; u_h^{n-1}),
\end{align}
which is uniquely solvable.
\end{enumerate}
\end{theorem}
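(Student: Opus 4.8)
The plan is to follow the two-step pattern of Theorems~\ref{thm:convexity-FIS-CH} and~\ref{thm:convexity-CN}: first prove that $E_{n,\rm MCN}^{\rm CH}(\cdot\,;u_h^{n-1})$ is strictly convex on $\mathring V_h$ when $k_n\le 8\epsilon^3$, and then show that the Crank--Nicolson system~\eqref{eq:CN-CH}, after eliminating $w_h^n$, is exactly the Euler--Lagrange equation of this functional, so that unique solvability is inherited from strict convexity together with coercivity. For the convexity I would compute the second Fréchet derivative of~\eqref{eq_CR_CH4} at $\theta_h$ in a direction $\eta_h\in\mathring V_h$: the Crank--Nicolson gradient term contributes $\frac{\epsilon}{4}\|\nabla\eta_h\|_{L^2(\Omega)}^2$, the $H^{-1}$-type term contributes $\frac{1}{2k_n}\|\nabla\Delta_h^{-1}\eta_h\|_{L^2(\Omega)}^2$, and, writing $\check G=\check G_+-\check G_-$, the potential term $\frac{1}{2\epsilon}\int_\Omega\check G(\theta_h;u_h^{n-1})\,dx$ contributes $\frac{1}{8\epsilon}\int_\Omega\bigl(3\theta_h^2+2u_h^{n-1}\theta_h+(u_h^{n-1})^2\bigr)\eta_h^2\,dx-\frac{1}{4\epsilon}\|\eta_h\|_{L^2(\Omega)}^2$, the first integral coming from the quartic-dominated $\check G_+$ (leading term $\tfrac{1}{16}\theta_h^4$) and the last term from the concave part $-\check G_-$. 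Since the quadratic $3\theta_h^2+2u_h^{n-1}\theta_h+(u_h^{n-1})^2$ has discriminant $-8(u_h^{n-1})^2\le 0$ and positive leading coefficient it is pointwise nonnegative and may be discarded, leaving $-\frac{1}{4\epsilon}\|\eta_h\|_{L^2(\Omega)}^2$ as the only indefinite contribution.

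The crux is then to absorb this term into the two ``good'' terms, exactly as in the proof of Theorem~\ref{thm:convexity-FIS-CH}: I would use the interpolation-type bound $\|\eta_h\|_{L^2(\Omega)}^2\le\|\nabla\Delta_h^{-1}\eta_h\|_{L^2(\Omega)}\|\nabla\eta_h\|_{L^2(\Omega)}$ (an immediate consequence of the definition~\eqref{eq:discrete-Laplace} of $\Delta_h$), followed by Young's inequality with the weight chosen so that the resulting $\|\nabla\eta_h\|_{L^2(\Omega)}^2$ contribution exactly matches the $\frac{\epsilon}{4}\|\nabla\eta_h\|_{L^2(\Omega)}^2$ already available, that is $\frac{1}{4\epsilon}\|\eta_h\|_{L^2(\Omega)}^2\le\frac{\epsilon}{4}\|\nabla\eta_h\|_{L^2(\Omega)}^2+\frac{1}{16\epsilon^3}\|\nabla\Delta_h^{-1}\eta_h\|_{L^2(\Omega)}^2$. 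Substituting, the two $\|\nabla\eta_h\|_{L^2(\Omega)}^2$ terms cancel and the second derivative is bounded below by $\bigl(\frac{1}{2k_n}-\frac{1}{16\epsilon^3}\bigr)\|\nabla\Delta_h^{-1}\eta_h\|_{L^2(\Omega)}^2$, which is nonnegative precisely when $k_n\le 8\epsilon^3$; since $\nabla\Delta_h^{-1}\eta_h\ne 0$ whenever $0\ne\eta_h\in\mathring V_h$ (and, at the endpoint, the discarded quartic term keeps the form positive unless $\theta_h$ and $u_h^{n-1}$ both vanish on $\operatorname{supp}\eta_h$), the bound is strictly positive for $\eta_h\ne 0$, so $E_{n,\rm MCN}^{\rm CH}(\cdot\,;u_h^{n-1})$ is strictly convex on $\mathring V_h$.

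For part~(2) I would eliminate $w_h^n$ from~\eqref{eq:CN-CH} verbatim as in the proof of Theorem~\ref{thm:convexity-FIS-CH}, now with the average $\tfrac12(\nabla u_h^n+\nabla u_h^{n-1})$ in place of $\nabla u_h^n$ and $\tilde F[u_h^n,u_h^{n-1}]$ in place of $f(u_h^n)$: testing the first equation of~\eqref{eq:CN-CH} with $\eta_h\equiv 1$ gives $u_h^n\in u_h^{n-1}+\mathring V_h$, testing the second with $v_h\equiv 1$ fixes $\int_\Omega w_h^n\,dx$, and the first equation then determines $w_h^n$ in terms of $u_h^n$ up to its mean; inserting this expression into the second equation and applying $I-Q_0$ recasts~\eqref{eq:CN-CH} as $(E_{n,\rm MCN}^{\rm CH})'(\theta_h;u_h^{n-1})(v_h)=0$ for all $v_h\in\mathring V_h$, with $u_h^n=u_h^{n-1}+\theta_h$. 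Combining this with part~(1) and the coercivity bound $E_{n,\rm MCN}^{\rm CH}(\theta_h;u_h^{n-1})\ge M_1\|\theta_h\|_{H^1(\Omega)}^2-M_2$ (valid since the $\tfrac{1}{16}\theta_h^4$ term of $\check G_+$ dominates all lower-order contributions), the standard argument of~\cite{ciarlet1989introduction} yields existence and uniqueness of the minimizer, which therefore is the unique solution of~\eqref{eq:CN-CH}.

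I expect the main obstacle to be the bookkeeping in the second-derivative computation: the cross terms produced by the Crank--Nicolson polynomial $\check G$ must be organised so that the genuinely convex quartic part appears visibly nonnegative and the only indefinite contribution is the single $\|\eta_h\|_{L^2(\Omega)}^2$ term, and the weight in Young's inequality must be chosen so that the two gradient terms cancel exactly and the sharp threshold $k_n\le 8\epsilon^3$ emerges rather than a suboptimal multiple of $\epsilon^3$. Everything downstream of these estimates is a routine transcription of the proof of Theorem~\ref{thm:convexity-FIS-CH}.
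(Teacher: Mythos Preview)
Your proposal is correct and follows essentially the same route as the paper's own proof: compute the second Fr\'echet derivative of $E_{n,\rm MCN}^{\rm CH}$, discard the nonnegative quartic term $3\theta_h^2+2u_h^{n-1}\theta_h+(u_h^{n-1})^2$, and absorb the lone indefinite contribution $-\tfrac{1}{4\epsilon}\|\eta_h\|_{L^2(\Omega)}^2$ via the interpolation estimate $\|\eta_h\|_{L^2(\Omega)}^2\le\|\nabla\Delta_h^{-1}\eta_h\|_{L^2(\Omega)}\|\nabla\eta_h\|_{L^2(\Omega)}$ together with Young's inequality tuned so that the $\|\nabla\eta_h\|_{L^2(\Omega)}^2$ terms cancel, yielding the sharp threshold $k_n\le 8\epsilon^3$. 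The paper states the interpolation bound as \eqref{eq_laplace_inverse} and then writes down the second derivative \eqref{convexity-MCN2}, declaring the rest ``standard''; your write-up is in fact more detailed than the paper's, and your coefficient $\tfrac{1}{8\epsilon}$ on the quartic term is the correct one (the paper's displayed $\tfrac{1}{2\epsilon}$ in \eqref{convexity-MCN2} is a typographical slip, as comparison with the Allen--Cahn analogue \eqref{convexity-MCN} confirms).
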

\begin{proof}
By the definition of operator $\Delta_h$ and the Schwarz's inequality, we have
\begin{align}\label{eq_laplace_inverse}
\frac{1}{2\epsilon}\|v_h\|_{L^2(\Omega)}^2\le \frac{1}{8\epsilon^3}\|\nabla\Delta_h^{-1}v_h\|^2_{L^2(\Omega)} + \frac{\epsilon}{2}\|\nabla v_h\|_{L^2(\Omega)}^2.
\end{align}
A direct calculation shows that
\begin{align} \label{convexity-MCN2}
 (E_{n,\rm MCH}^{\rm CH})''(\theta_h; u_h^{n-1})(v_h,v_h)&= \frac{\epsilon}{4}\|\nabla v_h\|_{L^2(\Omega)}^2 +\frac{1}{2k_n}\|\nabla\Delta_h^{-1}v_h\|^2_{L^2(\Omega)}-\frac{1}{4\epsilon}\|v_h\|^2_{L^2(\Omega)}\\
&\qquad+ \frac{1}{2\epsilon}\int_{\Omega}\bigl[ 3\theta_h^2+ 2u_h^{n-1}\theta_h + (u_h^{n-1})^2 \bigr]v_h^2~dx.\nonumber
\end{align}
This implies that $E_{n,\rm MCN}^{\rm CH}(\cdot; u_h^{n-1})$ is strictly convex
when $k_n\leq 8\epsilon^3$.  The rest of the proof is standard.
\end{proof}
\begin{remark}
Similar to the Allen-Cahn equation, the standard Crank-Nicolson
can also be constructed and analyzed for the Cahn-Hilliard equations.  
\end{remark}

\subsection{Some other second-order partially implicit schemes}
In this section, we briefly discuss several other second-order
partially implicit schemes. 

{\it Second-order stabilized semi-implicit scheme (BDF2)}: Seeking
$u_h^n\in V_h$ for $n=1,2,\cdots$, such that
\begin{equation}\label{eq:BDF2-AC}
\begin{aligned}
&(\frac{3u_h^{n}-4u_h^{n-1}+u_h^{n-2}}{2k_n},v_h)+(\nabla
u_h^n,\nabla v_h) +
\frac{1}{\epsilon^2}((2f(u_h^{n-1})-f(u_h^{n-2})),v_h)\\
&\qquad+ 
\frac{S}{\epsilon^2}(u_h^n-2u_h^{n-1}+u_h^{n-2},v_h)=0 \qquad\forall
v_h\in V_h,
\end{aligned}
\end{equation}
where $S>0$ (set as $S=10$ in the Test 10) is a stabilized constant.

{\it Second-order convex splitting scheme (CSS2)}: Seeking $u_h^n \in
V_h$ for $n = 1,2,\cdots$, such that 
\begin{equation} \label{eq:CSS2}
(\frac{u_h^n - u_h^{n-1}}{k_n}, v_h) + (\frac{\nabla u_h^n + \nabla
    u_h^{n-1}}{2}, \nabla v_h) +
\frac{1}{\epsilon^2}(g_+(u_h^{n}, u_h^{n-1}) -
    \frac{1}{2\epsilon^2}(3u_h^{n-1} - u_h^{n-2}), v_h) = 0.
\end{equation}

We know that BDF2 is a linear scheme so that satisfies the convexity
property.  Similar to the argument for the CSS version of modified
Crank-Nicolson scheme \eqref{CSS-MCN}, we know that \eqref{eq:CSS2}
also satisfies the convexity property. When $k_n\leq \epsilon^2$, these
second-order splitting schemes perform well (see Test 11 below).
However, we observe the following phenomenon for these second-order
splitting schemes:
\begin{enumerate}
\item They do not satisfy the discrete maximum principle, and it is
frequently worse than the first-order scheme;
\item They still suffer the lagging phenomenon or delayed convergence
for large time step size (see Test 11 below);  
\end{enumerate}

\paragraph{Test 11} \label{test11} 
In this test, the same domain and initial conditions are chosen as in
Test 1. Figure \ref{fig:2nd-CSS-small} shows the evolution of the
radius with respect to time for different second-order schemes. We
observe that all these second-order schemes perform well when
$k_n=\epsilon^2$. The performance of standard and modified
Crank-Nicolson schemes are similar.  When increasing the time step
size, however, we observe that the lagging phenomenon exists for the
CSS2 (see Figure \ref{fig:2nd-CSS-large}).

\begin{figure}[!htbp]
\centering 
\captionsetup{justification=centering}
\subfloat[Small time step: $k_n=\epsilon^2 = 0.0004$]{
  \includegraphics[width=0.35\textwidth]{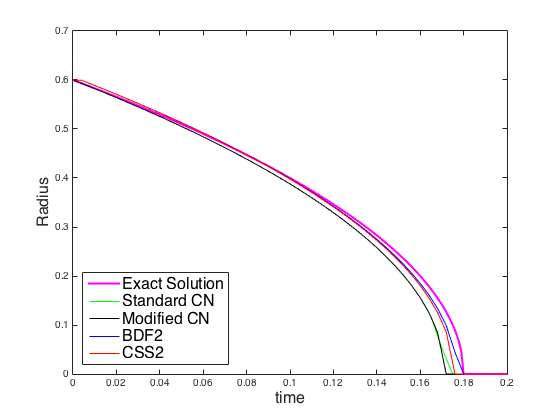}
  \label{fig:2nd-CSS-small}
}%
\subfloat[Large time step: $k_n=15\epsilon^2 = 0.006$]{
\includegraphics[width=0.35\textwidth]{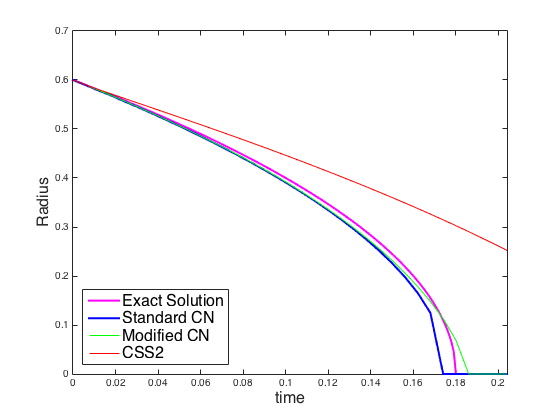}
  \label{fig:2nd-CSS-large}
}
\caption{Different second-order schemes for Allen-Cahn:
  $\epsilon=0.02$, $h=0.015$, and $T=0.17$}
\label{fig:2nd-CSS-smooth}
\end{figure}

\subsection{Artificial convexity}
Following \S\ref{subsec:convexity}, the concept of artificial convexity
scheme can also be applied to the wildly used CSS2 \eqref{eq:CSS2} by
considering the following modified model:
\begin{equation} \label{ConvexAC_2nd}
u_t + \frac{\delta_n}{\epsilon^2}u_{tt} - \Delta u +
\frac{1}{\epsilon^2}f(u) = 0.
\end{equation}
The modified Crank-Nicolson of \eqref{ConvexAC_2nd} can be written as 
$$ 
\begin{aligned}
(\frac{u_h^n-u_h^{n-1}}{k_n}, v_h) & +
(\frac{\delta_n}{\epsilon^2}\cdot\frac{u_h^n - 2u_h^{n-1} +
 u_h^{n-2}}{k_n^2}, v_h) \\ 
& + (\frac{\nabla u_h^n + \nabla
   u_h^{n-1}}{2}, \nabla v_h) + \frac{1}{\epsilon^2}(\tilde{F}[u_h^n,
       u_h^{n-1}], v_h) = 0 \qquad \forall v_h \in V_h,
\end{aligned}
$$ 
which is exactly the CSS2 scheme \eqref{eq:CSS2} when $\delta_n =
\frac{k_n^2}{2}$.

\section{Concluding remarks}
In this paper, we mainly focus on how the behavior of numerical
schemes depend on the time-step size.  For a given finite element
mesh, we compare solutions of fully discrete schemes with moderately
small time step size with those of fully implicit schemes with
extremely small time step size (which can be practically regarded as a
reliable approximation of a semi-discretization scheme).
We reach the following conclusions: 

\label{sec:concluding}
\begin{enumerate}
\item A first-order CSS can be mathematically interpreted as a
  standard FIS with a (much) smaller time-step size. As a result, a
  CSS would usually lead to approximation of the solution of the
  original model at a delayed time.  For the Allen-Cahn model, we have
  easily proved this time-delay effect rigorously.  For the
  Cahn-Hilliard model, we observe that, from the numerical
  experiments, CSS also has a similar time-delay effect.  This seems
  to indicate that the solution of the regularized model
  \eqref{ConvexCH} will probably have a time-delay effect in
  comparison to the solution of the original Cahn-Hilliard model
  \eqref{eq:CH}.

\item Since CSS is really an FIS scheme in disguise (at least for the
  cases we have studied in this paper), the value of other FIS should
  not be under-estimated.  Thus a modified FIS is proposed so that the
  maximum principle holds on the discrete level and, as a result, a
  Poisson-like preconditioner can be devised and rigorously analyzed.


\item A major advantage of any partially implicit scheme is that a
  relatively large time-step size can be used; but such schemes with
  a large time-step size may have time delay (see Figure
\ref{fig:2nd-CSS-large}) and hence may be inaccurate.


\item By using energy minimization we can remove the constraint on the
time step for fully implicit schemes without creating any delayed in
the solutions.

\item Through numerical experiments with modified Crank-Nicolson
scheme, we showed that energy stable is not a sufficient condition
(see Figure \ref{fig:ck-global-local-minimizer}).
That is, an unconditionally stable scheme is not necessarily better
than a conditionally stable scheme. 

\item {\it In summary}, we recommend to use FIS with energy
  minimization.
\end{enumerate}

While most partially implicit schemes have been developed as a numerical
technique for solving a given phase-field model, given the insight
obtained in this paper, we would like to argue that it may be helpful
to view the convex splitting technique as a discrete modeling
technique, namely a procedure to convexify the original model. The
convexified models are \eqref{ConvexAC-delta} and
\eqref{ConvexCH-delta} for the Allen-Cahn Model and the Cahn-Hilliard
equation, respectively.  While neither \eqref{ConvexAC-delta} nor
\eqref{ConvexCH-delta} has a corresponding convexity property on the
continuous level, their appropriately discretized model would have the
desired ``uniform convexity'' properties as stated in Theorem
\ref{thm:convexification}. 

Partially implicit schemes (especially CSS) have been used for many
different models that are different from or more complicated than both
the Allen-Cahn and Cahn-Hilliard equations.  We have not studied
carefully how these schemes behave in those models, but hopefully our
findings in this paper on partially implicit schemes for both the
Allen-Cahn and Cahn-Hilliard models will give some new insight into
the nature of convex splitting technique.

In terms of the unconditional energy-stability, we presented an energy
minimization version of the fully implicit schemes for phase field
modeling.  Although it is challenging to find the global minimizer,
hopefully our findings in this paper on fully implicit schemes for
both the Allen-Cahn and Cahn-Hilliard equations will give some new
insight on the phase field modeling. Accordingly, the design of a fast
solver for the energy minimization problem arising from the phase
field modeling is a research topic of great theoretical and practical
importance.

\section*{References}
\bibliographystyle{elsarticle-num}

\end{document}